\DeclareMathAlphabet\mathbfcal{OMS}{cmsy}{b}{n}
\numberwithin{equation}{section}
\newcommand{\Prob}[1]{\mathbf P\{#1\}}
\renewcommand{\P}{\mathbf P}
\newcommand{\one}{\mathbf 1}
\newcommand{\R}{\mathbb R}
\newcommand{\Sphere}{\mathbb{S}^{d-1}}
\newcommand{\cone}{\mathbb C}
\newcommand{\GG}{\mathbb G}
\newcommand{\KK}{\cone} 
\DeclareMathOperator{\clo}{cl}
\DeclareMathOperator{\co}{co}
\DeclareMathOperator{\supp}{supp}
\newcommand{\cco}{\overline{\mathrm{co}}\,}
\newtheorem{theorem}{Theorem}[section]
\newtheorem{proposition}[theorem]{Proposition}
\newtheorem{corollary}[theorem]{Corollary}
\newtheorem{lemma}[theorem]{Lemma}
\theoremstyle{definition} \newtheorem{definition}[theorem]{Definition}
\theoremstyle{remark}
\newtheorem{remark}[theorem]{Remark}
\newtheorem{example}[theorem]{Example}
\newcommand{\ve}{\mathtt{e}}
\newcommand{\vev}{\vec{\mathtt{e}}}
\newcommand{\ue}{\mathtt{u}}
\newcommand{\uev}{\vec{\mathtt{u}}}
\newcommand{\sA}{\mathcal{A}}
\newcommand{\sM}{\mathcal{M}}
\newcommand{\sP}{\mathcal{P}}
\newcommand{\sZ}{\mathcal{Z}}
\newcommand{\sF}{\mathcal{F}}
\newcommand{\E}{\mathbf{E}}
\newcommand{\Lp}[1][p]{L^{#1}}
\newcommand{\salg}{\mathfrak{F}}
\newcommand{\ssalg}{\mathfrak{H}}
\newcommand{\uE}{\mathbfcal{U}}
\newcommand{\vE}{\mathbfcal{E}}
\newcommand{\uvE}{\underline{\vE}}
\newcommand{\uuE}{\underline{\uE}}
\newcommand{\ovE}{\overline{\vE}}
\newcommand{\ouE}{\overline{\uE}}
\newcommand{\ouEnot}{\widetilde{\uE}}
\newcommand{\ovEnot}{\widetilde{\vE}}
\newcommand{\muv}{\mu}
\newcommand{\HX}[1][X]{H_\eta(#1)}
\newcommand{\HuX}[1][X]{H_u(#1)}
\newcommand{\HvX}[1][X]{H_v(#1)}
\newcommand{\sFC}{\co\sF(\cone)}
\newlength{\querylen}
\newcommand{\ssuperlinear}{superlinear\xspace}
\newcommand{\ssublinear}{sublinear\xspace}
\begin{document}


\title{Nonlinear expectations of random sets}

\author{Ilya Molchanov and Anja M\"uhlemann\\
{\small University of Bern, Institute of Mathematical Statistics}\\
{\small and Actuarial Science,
Alpeneggstrasse 22, CH-3012 Bern, Switzerland} }

\maketitle

\begin{abstract}
  Sublinear functionals of random variables are known as sublinear
  expectations; they are convex homogeneous functionals on
  infinite-dimensional linear spaces. We extend this concept for
  set-valued functionals defined on measurable set-valued functions
  (which form a nonlinear space), equivalently, on random closed
  sets. This calls for a separate study of \ssublinear and \ssuperlinear
  expectations, since a change of sign does not convert one to the
  other in the set-valued setting.

  We identify the extremal expectations as those arising from the
  primal and dual representations of them. Several general
  construction methods for nonlinear expectations are presented and
  the corresponding duality representation results are obtained. On
  the application side, sublinear expectations are naturally related
  to depth trimming of multivariate samples, while superlinear ones
  can be used to assess utilities of multiasset portfolios.
\end{abstract}




\section{Introduction}

Fix a probability space $(\Omega,\salg,\P)$. A \emph{sublinear}
expectation is a real-valued function $\ve$ defined on the space
$\Lp(\R)$ of $p$-integrable random variables (with $p\in[1,\infty]$),
such that
\begin{equation}
  \label{eq:5}
  \ve(\xi+a)=\ve(\xi)+a
\end{equation}
for each deterministic $a$, the function $\ve$ is monotone,
\begin{displaymath}
  \ve(\xi) \leq\ve(\eta)\quad \text{if } \xi \leq \eta\; \text{a.s.},
\end{displaymath}
homogeneous
\begin{displaymath}
  \ve(c\xi)=c\ve(\xi),\quad c\geq 0,
\end{displaymath}
and subadditive
\begin{equation}
  \label{eq:univ-sub}
  \ve(\xi+\eta)\leq \ve(\xi)+\ve(\eta), 
\end{equation}
see \cite{pen04}, who brought sublinear expectations to the
realm of probability theory and established their close relationship
to solutions of backward stochastic differential equations.  A
\emph{superlinear} expectation $\ue$ satisfies the same properties
with \eqref{eq:univ-sub} replaced by
\begin{equation}
  \label{eq:univ-sup}
  \ue(\xi+\eta)\geq \ue(\xi)+\ue(\eta).
\end{equation}
In many studies, the homogeneity property together with the sub-
(super-) additivity is replaced by the convexity of $\ve$ and the
concavity of $\ue$.  These nonlinear expectations may be defined on a
larger family than $\Lp$ or on its subfamily; it is necessary to
assume that the domain of definition contains all constants and is
closed under addition and multiplication by positive constants. The
range of values may be extended to $(-\infty,\infty]$ for the
sublinear expectation and to $[-\infty,\infty)$ for the superlinear
one. 

The choice of notation $\ve$ and $\ue$ is explained by the fact that
the superlinear expectation can be viewed as a utility function that
allocates a higher utility value to the sum of two random variables in
comparison with the sum of their individual utilities, see
\cite{delb12}. If random variable $\xi$ models a financial gain, then
$r(\xi)=-\ue(\xi)$ is called a \emph{coherent risk measure}. Property
\eqref{eq:5} is then termed cash invariance, and the superadditivity
property is turned into subadditivity due to the change of sign. The
subadditivity of risk means that the sum of two random variables bears
at most the same risk as the sum of their risks; this is justified by
the economic principle of diversification.

It is easy to see that $\ve$ is a sublinear expectation if and only if
\begin{equation}
  \label{eq:1}
  \ue(\xi)=-\ve(-\xi)
\end{equation}
is a superlinear one, and in this case $\ve$ and $\ue$ are said to
form an \emph{exact dual pair}. The sublinearity property yields that
\begin{displaymath}
  \ve(\xi)+\ve(-\xi)\geq \ve(0)=0\,,
\end{displaymath}
so that $-\ve(-\xi)\leq \ve(\xi)$. The interval $[\ue(\xi),\ve(\xi)]$
generated by an exact dual pair of nonlinear expectations
characterises the uncertainty in the determination of the expectation
of $\xi$. In finance, such intervals determine price ranges in
illiquid markets, see \cite{mad15}.  

We equip the space $\Lp$ with the $\sigma(\Lp,\Lp[q])$-topology based
on the standard pairing of $\Lp$ and $\Lp[q]$ with $1/p+1/q=1$.  It is
usually assumed that $\ve$ is lower semicontinuous and $\ue$ is upper
semicontinuous in the $\sigma(\Lp,\Lp[q])$-topology.
Given that $\ve$ and $\ue$ take finite values, general results of
functional analysis concerning convex functions on linear spaces imply
the semicontinuity property if $p\in[1,\infty)$ (see
\cite{kain:rues09}); it is additionally imposed if $p=\infty$.
A nonlinear expectation is said to be \emph{law invariant} (more
exactly, law-determined) if it takes the same value on identically
distributed random variables, see \cite[Sec.~4.5]{foel:sch04}.

A rich source of sublinear expectations is provided by suprema of
conventional (linear) expectations taken with respect to several
probability measures.  Assuming the $\sigma(\Lp,\Lp[q])$-lower
semicontinuity, the bipolar theorem yields that this is the only
possible case, see \cite{delb12} and \cite{kain:rues09}. Then
\begin{equation}
  \label{eq:supremum-vE}
  \ve(\xi)=\sup_{\gamma\in\sM,\E\gamma=1} \E(\gamma\xi)
\end{equation}
is the supremum of expectations $\E(\gamma\xi)$ over a convex
$\sigma(\Lp[q],\Lp)$-closed cone $\sM$ in $\Lp[q](\R_+)$; the
superlinear expectation is obtained by replacing the supremum with the
infimum. In the following, we assume that \eqref{eq:supremum-vE} holds
and the representing set $\sM$ is chosen in such a way to ensure that
the corresponding sublinear and superlinear expectations are law
invariant, that is, with each $\gamma$, $\sM$ contains all random
variables identically distributed as $\gamma$.

A \emph{random closed
  set} $X$ in Euclidean space is a random element with values in the
family $\sF$ of closed sets in $\R^d$ such that $\{X\cap
K\neq\emptyset\}$ is a measurable event for all compact sets $K$ in
$\R^d$, see \cite{mo1}. In other words, a random closed set is a
measurable \emph{set-valued function}. A random closed set $X$ is said
to be \emph{convex} if $X$ almost surely belongs to the family
$\co\sF$ of closed convex sets in $\R^d$. For convex random
sets in Euclidean space, the measurability condition is equivalent to
the fact that the support function of $X$ (see \eqref{eq:22}) is a
random function on $\R^d$ with values in $(-\infty,\infty]$. 

In the set-valued setting, it is natural to replace the inequalities
\eqref{eq:univ-sub} and \eqref{eq:univ-sup} with the inclusions. For
sets, the minus sign corresponds to the reflection with respect to the
origin; it does not alter the direction of the inclusion, and so there
is no direct link between set-valued sublinear and superlinear
expectations. Set inclusions are always considered nonstrict, e.g.,
$A\subset B$ allows for $A=B$.

This paper aims to systematically explore nonlinear set-valued
expectations. Section~\ref{sec:select-expect} recalls the classical
concept of the (linear) \emph{selection expectation} for random closed
sets, see \cite{aum65} and \cite[Sec.~2.1]{mo1}. A random
vector $\xi$ is said to be a \emph{selection} of $X$ if $\xi\in X$
almost surely.  The selection
expectation $\E X$ is defined as the closure of the set of
expectations of all integrable selections of $X$ (the primal
representation) or by considering the expected support function (being
the dual representation).
In this section, we introduce a suitable convergence concept for
(possibly, unbounded) random convex sets based on linear functionals
applied to the support function.

Nonlinear expectations of random convex sets are introduced in
Section~\ref{sec:general-non-linear}. The definitions refine the
properties of nonlinear expectations stated in
\cite[Sec.~2.2.7]{mo1}. Basic examples of such expectations and more
involved constructions are considered with a particular attention to
the expectations of random singletons and half-spaces. It is also
explained how the set-valued expectation applies to random convex
functions and how it is possible to get rid of the homogeneity
property and to extend the setting to convex/concave functionals.

Among the rather vast variety of nonlinear expectations, it is
possible to identify extremal ones: the \emph{minimal} sublinear
expectation of $X$ is the convex hull of nonlinear expectations of all
sets from some family that yields $X$ as their union. In the case of
selections, this becomes a direct generalisation of the primal
representation for the selection expectation. The \emph{maximal}
superlinear extension is the intersection of nonlinear expectations
of all half-spaces containing the random set. While in the linear case
the both coincide and provide two equivalent definitions of the
selection expectation, in general, the two constructions differ.

Nonlinear maps restricted to the family $\Lp(\R^d)$ of $p$-integrable
random vectors have been studied in \cite{cas:mol07,ham:hey10}, the
comprehensive duality results can be found in
\cite{drap:ham:kup16}. In our framework, these studies concern the
cases when the argument of a superlinear expectation is the sum of a
random vector and a convex cone. However, for general set-valued
arguments, it is not possible to rely the approach of
\cite{ham:hey10,drap:ham:kup16}, since the known techniques of
set-valued optimisation theory (see, e.g., \cite{khan:tam:z15}) are
not applicable.

The key technique suitable to handle nonlinear expectations relies on
the bipolar theorem. A direct generalisation of this theorem for
functional of random convex sets is not feasible, since random convex
sets do not form a linear space.  Section~\ref{sec:subl-expect}
provides duality results for sublinear expectations and
Section~\ref{sec:superl-expect-1} for the superlinear
ones. Specifically, the constant preserving minimal sublinear
expectations are identified.  For the superlinear expectation, the
family of random closed convex sets such that the sublinear
expectation contains the origin is a convex cone. However, it is
rather tricky to use the separation results, since linear functions
(such as the selection expectation) may have trivial values on
unbounded integrable random sets. For instance, the selection
expectation of a random half-space with a nondeterministic normal is
the whole space; in this case the superlinear expectation is not
dominated by any nontrivial linear one. In order to handle such
situations, the duality results for superlinear expectations are
proved for the maximal superlinear expectation. It is shown that the
superlinear expectation of a singleton is usually empty; in order to
come up with a nontrivial minimal extension, singletons in the
definition of the minimal extension are replaced by translated
cones. 

Some applications are outlined in Section~\ref{sec:applications}.
Sublinear expectations are useful as depth functions in order to
identify outliers in samples of random sets. Such samples often appear
in partial identified models in econometrics, see \cite{mol:mol16}.
The superlinear expectation is closely related to measuring
multivariate risk in finance and to multivariate utilities.
Superlinear expectations are useful to describe the utility, since the
utility of the sum of two portfolios described by random sets
``dominates'' the sum of their individual utilities. We show that the
minimal extension of a superlinear expectation is closely related to
the selection risk measure of lower random sets considered in
\cite{cas:mol14}.  

Appendix presents a self-contained proof of the fact that
vector-valued sublinear expectations of random vectors necessarily
split into sublinear expectations applied to each component of the
vector. This fact reiterates the point that the set-valued setting is
essential  for defining nonlinear expectations of random vectors. 

Note the following notational conventions: $X,Y$ denote random closed
convex sets, $F$ is a deterministic closed convex set, $\xi$ and
$\beta$ are $p$-integrable random vectors and random variables,
$\zeta$ and $\gamma$ are $q$-integrable vectors and variables with
$1/p+1/q=1$, $\eta$ is usually a random vector from the unit sphere
$\Sphere$, $u$ and $v$ are deterministic points from $\Sphere$.

\section{Selection expectation}
\label{sec:select-expect}

\subsection{Integrable random sets and selection expectation}
\label{sec:integr-rand-sets}

Let $X$ be a random closed set in $\R^d$, which is always assumed to
be almost surely non-empty.  A random vector $\xi$ is called a
\emph{selection} of $X$ if $\xi\in X$ almost surely. Let $\Lp(X)$
denote the family of $p$-integrable selections of $X$ for
$p\in[1,\infty)$, essentially bounded ones if $p=\infty$, and all
selections if $p=0$. If $\Lp(X)$ is not empty, then $X$ is called
\emph{$p$-integrable}, shortly integrable if $p=1$. This is the case
if $X$ is \emph{$p$-integrably bounded}, that is,
$\|X\|=\sup\{\|x\|:\; x\in X\}$ is $p$-integrable (essentially bounded
if $p=\infty$).

If $X$ is integrable, then its \emph{selection expectation} is
defined by 
\begin{equation}
  \label{eq:3}
  \E X=\clo\{\E \xi:\; \xi\in\Lp[1](X)\},
\end{equation}
which is the closure of the set of expectations of all integrable
selections of $X$, see \cite[Sec.~2.1.2]{mo1}. If $X$ is integrably
bounded, then the closure on the right-hand side is not needed, $\E X$
is compact, and also almost surely convex if $X$ is convex or the
underlying probability space is non-atomic. From now on, we assume
that all random closed sets are almost surely convex.

The \emph{support function} of a non-empty set $F$ in $\R^d$ is
defined by
\begin{equation}
  \label{eq:22}
  h(F,u)=\sup\{\langle x,u\rangle:\; x\in F\},\quad u\in\R^d,
\end{equation}
allowing for possibly infinite values if $F$ is not bounded, where
$\langle u,x\rangle $ denotes the scalar product. Due to homogeneity,
the support function is determined by its values on the unit sphere
$\Sphere$.

If $X$ is an integrable random closed set, then its expected support
function is the support function of $\E X$, that is, 
\begin{equation}
  \label{eq:16}
  \E h(X,u)=h(\E X,u),\quad u\in\R^d,
\end{equation}
see \cite[Th.~2.1.38]{mo1}. Thus,
\begin{displaymath}
  \E X=\bigcap_{u\in\Sphere} \{x:\; \langle x,u\rangle\leq \E h(X,u)\},
\end{displaymath}
which may be seen as the \emph{dual} representation of the selection
expectation with \eqref{eq:3} being its \emph{primal} representation.
\cite{arar:rud15} provide an axiomatic
Daniell--Stone type characterisation of the selection expectation. 
Property~\eqref{eq:16} can be also expressed as
\begin{equation}
  \label{eq:20}
  \E \sup_{\xi\in\Lp[1](X)} \langle\xi,u\rangle
  =\sup_{\xi\in\Lp[1](X)} \E \langle\xi,u\rangle,
\end{equation}
meaning that in this case it is possible to interchange the
expectation and the supremum. If
$X$ is an integrable random closed set and $\ssalg$ is a
sub-$\sigma$-algebra of $\salg$, the \emph{conditional expectation}
$\E(X|\ssalg)$ is identified by its support function, being the
conditional expectation of the support function of $X$, see
\cite{hia:ume77} and \cite[Sec.~2.1.6]{mo1}.

The dilation (scaling) of a closed set $F$ is defined as $cF=\{cx:\;
x\in F\}$ for $c\in\R$. 
For two closed sets $F_1$ and $F_2$, their 
\emph{closed Minkowski sum} is defined by
\begin{displaymath}
  F_1+F_2=\clo \{x+y:\;x\in F_1,\,y\in F_2\},
\end{displaymath}
and the sum is empty if at least one summand is empty. 
If at least one of $F_1$ and $F_2$ is compact, then the closure on the
right-hand side is not needed. We write shortly $F+a$ instead of
$F+\{a\}$ for $a\in\R^d$.

If $X$ and $Y$ are random closed convex sets, then $X+Y$ 
is a random closed set, see \cite[Th.~1.3.25]{mo1}.  
The selection expectation is \emph{linear} on integrable random
closed sets, that is,
\begin{displaymath}
  \E(X+Y)=\E X+ \E Y,
\end{displaymath}
see, e.g., \cite[Prop.~2.1.32]{mo1}. 

Let $\cone$ be a deterministic closed convex cone in $\R^d$ which is
distinct from the whole space. If $F=F+\cone$, then $F$ is said to be
$\cone$-closed. Due to the closed Minkowski sum on the right-hand side,
$F$ is also topologically closed. Let $\sFC$ denote the family of all
$\cone$-closed convex sets in $\R^d$ (including the empty set), and
let $\Lp(\sFC)$ be the family of all $p$-integrable random sets
with values in $\sFC$. Any random set from $\Lp(\sFC)$ is necessarily
a.s.\ non-empty.  
By
\begin{displaymath}
  \GG=\cone^o=\{u\in\R^d:\; h(\cone,u)\leq 0\}
\end{displaymath}
we denote the \emph{polar cone} to $\cone$. 

\begin{example}
  \label{ex:lower-sets}
  If $\cone=\{0\}$, then $\co\sF(\{0\})$ is the family of all convex
  closed sets in $\R^d$. If $\cone=\R_-^d$, then $\co\sF(\R_-^d)$ is
  the family of lower convex closed sets, and a random closed convex
  set with realisations in this family is called a random \emph{lower} set.
\end{example}

\begin{example}
  \label{ex:kabanov}
  Let $\cone$ be a convex closed cone in $\R^d$ which does not
  coincide with the whole space. If $X=\xi+\cone$ for
  $\xi\in\Lp(\R^d)$, then $X$ belongs to the space $\Lp(\sFC)$.
  For each $\zeta\in\Lp[q](\GG)$, we have $h(X,\zeta)=\langle
  \xi,\zeta\rangle$.
\end{example}

\subsection{Support function at random directions}
\label{sec:support-function-at}

Let
\begin{equation}
  \label{eq:38}
  H_u(t)=\{x\in\R^d:\; \langle x,u\rangle\leq t\}, \quad u\neq 0,
\end{equation}
denote a \emph{half-space} in $\R^d$, and let $H_u(\infty)=\R^d$.
Particular difficulties when dealing with \emph{unbounded} random
closed sets are caused by the fact that the support function of any
deterministic argument may be infinite with probability one.

\begin{example}
  \label{ex:non-h-approx}
  Let $X=H_\eta(0)$ be the random half-space with the normal vector
  $\eta$ having a non-atomic distribution. Then $\E X$ is the whole
  space. The support function of $X$ is finite only on the random ray
  $\{c\eta:\; c\geq0\}$.
\end{example}

It is shown in \cite[Cor.~3.5]{lep:mol17} that each random closed
convex set satisfies
\begin{equation}
  \label{eq:30}
  X=\bigcap_{\eta\in\Lp[0](\Sphere)} \HX,
\end{equation}
where 
\begin{displaymath}
  \HX=H_\eta(h(X,\eta))
\end{displaymath}
is the smallest half-space with outer normal $\eta$ that contains
$X$. If $X$ is a.s.\ $\cone$-closed, \eqref{eq:30} holds with $\eta$
running through the family of selections of $\Sphere\cap\GG$.

For each $\zeta\in\Lp[q](\R^d)$, the support function $h(X,\zeta)$ is
a random variable with values in $(-\infty,\infty]$, see
\cite[Lemma~3.1]{lep:mol17}. While $h(X,\zeta)$ is not necessarily
integrable, its negative part is always integrable if $X$ is
$p$-integrable. Indeed, choose any $\xi\in\Lp(X)$, and write
\begin{displaymath}
  h(X,\zeta)=h(X-\xi,\zeta)+\langle \xi,\zeta\rangle. 
\end{displaymath}
The second summand on the right-hand side is integrable, while the
first one is nonnegative. 

\begin{lemma}
  \label{lemma:sel-domination}
  Let $X,Y\in\Lp(\sFC)$. If $\E h(Y,\zeta)\leq \E h(X,\zeta)$ for all
  $\zeta\in\Lp[q](\GG)$, then $Y\subset X$ a.s. 
\end{lemma}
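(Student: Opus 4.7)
The plan is to reduce the set-inclusion $Y \subset X$ to a family of scalar inequalities between support functions, then bootstrap the integral inequality in the hypothesis to a pointwise (a.s.) inequality via an indicator-function test. By the representation \eqref{eq:30} adapted to the $\cone$-closed case, it suffices to show that, for every selection $\eta$ of $\Sphere \cap \GG$, one has $Y \subset \HX = H_\eta(h(X,\eta))$ a.s., which is the same as $h(Y,\eta) \leq h(X,\eta)$ a.s. So I would fix such an $\eta$ and aim to derive this almost-sure inequality from the assumption.

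To activate the hypothesis at a random direction, I would plug in test functions of the form $\zeta = \eta\one_A$ for $A \in \salg$. Since $|\eta|\leq 1$ the vector $\zeta$ is essentially bounded, hence in $\Lp[q]$, and since $\GG$ is a closed convex cone it contains $0$, so $\zeta$ is a $q$-integrable selection of $\GG$ and the hypothesis applies. By positive homogeneity of the support function together with $h(Z,0)=0$ for any non-empty $Z$, one has $h(Z,\eta\one_A) = \one_A h(Z,\eta)$ a.s. for $Z=X,Y$, which turns the hypothesis into
\begin{equation*}
\E[\one_A h(Y,\eta)] \leq \E[\one_A h(X,\eta)] \qquad \text{for every } A \in \salg.
\end{equation*}
These expectations are unambiguously defined in $(-\infty,\infty]$ because, as noted before the lemma, the negative parts of $h(X,\eta)$ and $h(Y,\eta)$ are integrable (choose selections of $X$ and $Y$ and use the decomposition $h(X,\zeta)=h(X-\xi,\zeta)+\langle\xi,\zeta\rangle$).

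The remaining step is to promote this family of integral inequalities to a pointwise inequality, and this is where one has to be slightly careful because $h(X,\eta)$ may be infinite on a set of positive probability. To handle this I would fix rationals $\eps>0$ and $c>0$ and set
\begin{equation*}
A_{\eps,c} = \{h(Y,\eta) > h(X,\eta) + \eps\} \cap \{h(X,\eta) < c\}.
\end{equation*}
On $A_{\eps,c}$ the variable $\one_{A_{\eps,c}} h(X,\eta)$ is bounded above by $c$ and integrable, while $\one_{A_{\eps,c}} h(Y,\eta) \geq \one_{A_{\eps,c}}(h(X,\eta)+\eps)$. The integral inequality then forces $\eps\,\P(A_{\eps,c}) \leq 0$, so $\P(A_{\eps,c})=0$. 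Taking the union over rational $\eps,c>0$ yields $h(Y,\eta) \leq h(X,\eta)$ a.s.\ on $\{h(X,\eta)<\infty\}$, and the inequality is trivial on $\{h(X,\eta)=\infty\}$. This gives $h(Y,\eta)\leq h(X,\eta)$ a.s., hence $Y\subset\HX$ a.s.; intersecting over $\eta \in \Lp[0](\Sphere\cap\GG)$ via \eqref{eq:30} completes the proof.

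The only genuine subtlety, and the one I would flag as the main obstacle, is the possible infiniteness of $h(X,\eta)$ on deterministic directions (as in Example~\ref{ex:non-h-approx}); the truncation by $\{h(X,\eta)<c\}$ is what lets the argument go through for merely $p$-integrable, possibly unbounded, random sets.
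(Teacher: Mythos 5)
Your argument is, in substance, the paper's own: test the hypothesis with $\zeta\one_A$, use $h(Z,\eta\one_A)=\one_A h(Z,\eta)$ (valid since $Z$ is a.s.\ non-empty, so $h(Z,0)=0$) to localise the inequality, and pass from the family of integral inequalities to an almost sure inequality of support functions. Your truncation by $\{h(X,\eta)<c\}$ is a correct and welcome elaboration of the paper's terse ``whence $h(Y,\zeta)\leq h(X,\zeta)$ a.s.'', which indeed requires exactly this care because $h(X,\eta)$ may be infinite with positive probability while only the negative parts are known to be integrable.

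The one step that is not airtight as written is the last one. For each fixed $\eta\in\Lp[0](\Sphere\cap\GG)$ you obtain $Y\subset\HX$ outside a null set $N_\eta$ that depends on $\eta$; since the family of such $\eta$ is uncountable, you cannot simply intersect over $\eta$ and invoke the representation \eqref{eq:30} to conclude $Y\subset\bigcap_\eta \HX=X$ a.s.\ --- the union of the sets $N_\eta$ need not be null. What is actually needed is the contrapositive: if $\Prob{Y\not\subset X}>0$, one must produce a \emph{single} $\eta\in\Lp[0](\Sphere\cap\GG)$ with $h(Y,\eta)>h(X,\eta)$ on an event of positive probability, which requires a measurable selection of a point of $Y$ outside $X$ together with a measurable choice of separating direction (which lies in $\GG$ because $X$ is $\cone$-closed, so $h(X,u)=\infty$ for $u\notin\GG$). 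This is precisely the content of \cite[Cor.~3.6]{lep:mol17}, which is what the paper cites at this point in place of \eqref{eq:30}; with that citation, or with the sketched selection argument spelled out, your proof is complete.
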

\begin{proof}
  For each measurable event $A$, replacing $\zeta$ with $\zeta\one_A$
  yields that 
  \begin{displaymath}
    \E[h(Y,\zeta)\one_A]\leq \E[h(X,\zeta)\one_A],
  \end{displaymath}
  whence $h(Y,\zeta)\leq h(X,\zeta)$ a.s. The same holds for a general
  $\zeta\in\Lp[q](\R^d)$ by splitting it into the cases when
  $\zeta\in\GG$ and $\zeta\notin\GG$. For a general
  $\zeta\in\Lp[0](\R^d)$, we have $h(Y,\zeta_n)\leq h(X,\zeta_n)$
  a.s. with $\zeta_n=\zeta\one_{\{\|\zeta\|\leq n\}}$, $n\geq1$. Thus,
  $h(Y,\zeta)\leq h(X,\zeta)$ a.s. for all $\zeta\in\Lp[0](\R^d)$, and
  the statement follows from \cite[Cor.~3.6]{lep:mol17}.
\end{proof}

\begin{corollary}
  \label{cor:distr-X}
  The distribution of $X\in\Lp(\sFC)$ is uniquely determined by $\E
  h(X,\zeta)$ for $\zeta\in\Lp[q](\GG)$.
\end{corollary}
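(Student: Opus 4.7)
The plan is to derive the corollary as an immediate symmetric application of Lemma~\ref{lemma:sel-domination}. Suppose $X_1,X_2\in\Lp(\sFC)$ are defined on a common probability space and satisfy $\E h(X_1,\zeta)=\E h(X_2,\zeta)$ for every $\zeta\in\Lp[q](\GG)$. I would read this equality as the pair of inequalities $\E h(X_1,\zeta)\le\E h(X_2,\zeta)$ and $\E h(X_2,\zeta)\le\E h(X_1,\zeta)$ and apply Lemma~\ref{lemma:sel-domination} to each, which yields $X_1\subset X_2$ and $X_2\subset X_1$ almost surely, hence $X_1=X_2$ a.s. In particular their laws on $\sFC$ coincide, which is strictly stronger than what is claimed.

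The only interpretive subtlety worth flagging is that the functional $\zeta\mapsto\E h(X,\zeta)$ depends on the joint distribution of $X$ with arbitrary $\Lp[q](\GG)$-valued random directions on the underlying probability space, not merely on the marginal law of $X$; restricting $\zeta$ to deterministic vectors in $\GG$ would only produce the support function of the selection expectation via~\eqref{eq:16}, which is far from characterising the full distribution. The real work—turning information about expected support functions at random directions into a pointwise almost-sure comparison of the sets themselves, with all its localisation on the events $\{\|\zeta\|\le n\}$ and the splitting according to whether $\zeta\in\GG$—has already been absorbed into Lemma~\ref{lemma:sel-domination}, so no further machinery is needed for the corollary.
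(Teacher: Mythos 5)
Your proof is correct and rests on exactly the same key lemma as the paper's: Lemma~\ref{lemma:sel-domination} does all the work, and your symmetric two-sided application of it to $X_1$ and $X_2$ immediately gives $X_1=X_2$ a.s., which is, as you note, stronger than equality of laws. The paper instantiates the lemma slightly differently --- taking $Y=\{\xi\}$ so that the functional $\zeta\mapsto\E h(X,\zeta)$ identifies all $p$-integrable selections of $X$, and then recovering $X$ as the closure of that family of selections --- but this is only a cosmetic difference, and your version has the minor advantage of not needing the cited representation of $X$ through its selections.
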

\begin{proof}
  Apply Lemma~\ref{lemma:sel-domination} to $Y=\{\xi\}$, so that the
  values of $\E h(X,\zeta)$ identify all $p$-integrable selections of
  $X$, and note that $X$ equals the closure of the family of its
  $p$-integrable selections, see \cite[Prop.~2.1.4]{mo1}.
\end{proof}

A random closed set $X$ is called \emph{Hausdorff approximable} if it
appears as the almost sure limit in the Hausdorff metric of random
closed sets with at most a finite number of values. It is known
\cite[Th.~1.3.18]{mo1} that all random compact sets are Hausdorff
approximable, as well as those that appear as the sum of a random
compact set and a random closed set with at most a finite number of
possible values. The random closed set $X$ from
Example~\ref{ex:non-h-approx} is not Hausdorff approximable.

The distribution of a Hausdorff approximable $p$-integrable random
closed convex set $X$ is uniquely determined by the selection
expectations $\E(\gamma X)$ for all $\gamma\in\Lp[q](\R_+)$, actually
it suffices to let $\gamma$ be all measurable indicators, see
\cite{hes02} and \cite[Prop.~2.1.33]{mo1}.
If $X$ is Hausdorff approximable, then its selections $\xi$ are
identified by the condition $\E(\xi\one_A)\in \E(X\one_A)$ for all
events $A$. By passing to the support functions, we arrive at a
variant of Lemma~\ref{lemma:sel-domination} with $\zeta=u\one_A$ for
all $u\in\Sphere$ and $A\in\salg$.

\subsection{Convergence of random closed convex sets}
\label{sec:conv-rand-clos-1}

Convergence of random closed sets is typically considered in
probability, almost surely, or in distribution. In the following we
need to define $\Lp$-type convergence concepts suitable to deal with
unbounded random convex sets. 

The space $\Lp(\R^d)$ is equipped with the
$\sigma(\Lp,\Lp[q])$-topology, that is, $\xi_n\to \xi$ means that
$\E\langle\xi,\zeta\rangle\to\E\langle\xi,\zeta\rangle$ for all
$\zeta\in\Lp[q](\R^d)$.

\begin{lemma}
  \label{lemma:Lp-closed}
  If $X$ is a $p$-integrable random $\cone$-closed convex set, then
  $\Lp(X)$ is a non-empty convex $\sigma(\Lp,\Lp[q])$-closed and
  $\Lp(\cone)$-closed subset of $\Lp(\R^d)$. 
\end{lemma}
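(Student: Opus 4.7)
The plan is to verify the four claimed properties of $\Lp(X)$ in turn, with $\sigma(\Lp,\Lp[q])$-closedness being the only nontrivial one.

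Non-emptiness is immediate from the definition of $p$-integrability of $X$. Convexity follows from the a.s.\ convexity of $X$: for $\xi_1,\xi_2\in\Lp(X)$ and $\lambda\in[0,1]$, the combination $\lambda\xi_1+(1-\lambda)\xi_2$ lies in $\Lp(\R^d)$ by linearity and a.s.\ in $X$ by convexity. For $\Lp(\cone)$-closedness, I would take $\xi\in\Lp(X)$ and $\zeta\in\Lp(\cone)$ and note that $\xi+\zeta$ is $p$-integrable and a.s.\ lies in $X+\cone=X$, using that $X$ is $\cone$-closed.

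For the main property, I would take a net $(\xi_\alpha)$ in $\Lp(X)$ with $\xi_\alpha\to\xi$ in the $\sigma(\Lp,\Lp[q])$-topology; membership $\xi\in\Lp(\R^d)$ is automatic since the topology lives on this space, so the task is to show $\xi\in X$ a.s. Fix an arbitrary $\zeta\in\Lp[q](\GG)$. Since $\xi_\alpha\in X$ a.s., we have $\langle\xi_\alpha,\zeta\rangle\leq h(X,\zeta)$ a.s. The left-hand side is integrable by H\"older, and the right-hand side has integrable negative part, as noted in the discussion preceding Lemma~\ref{lemma:sel-domination}; so taking expectations yields $\E\langle\xi_\alpha,\zeta\rangle\leq\E h(X,\zeta)\in(-\infty,\infty]$. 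Passing to the limit along the net, by the definition of the $\sigma(\Lp,\Lp[q])$-topology, I obtain $\E\langle\xi,\zeta\rangle\leq\E h(X,\zeta)$ for every $\zeta\in\Lp[q](\GG)$.

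The last step is to feed this inequality into Lemma~\ref{lemma:sel-domination} applied with $Y=\{\xi\}+\cone$. This $Y$ is a $\cone$-closed convex random set with $\xi$ as a $p$-integrable selection, so $Y\in\Lp(\sFC)$. Because $h(\cone,\zeta)=0$ for $\zeta\in\GG$, we have $h(Y,\zeta)=\langle\xi,\zeta\rangle$, and the inequality above reads $\E h(Y,\zeta)\leq\E h(X,\zeta)$ for all $\zeta\in\Lp[q](\GG)$. Lemma~\ref{lemma:sel-domination} then gives $\{\xi\}+\cone\subset X$ a.s., whence $\xi\in X$ a.s. The only genuine subtlety is the possible infiniteness of $\E h(X,\zeta)$ when passing to the weak limit, but this is harmless because only the negative part of $h(X,\zeta)$ needs to be integrable for the one-sided expectation inequality to behave well under limits in $(-\infty,\infty]$.
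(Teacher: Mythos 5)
Your proof is correct and follows essentially the same route as the paper: pass to the limit in the expected support-function inequality $\E\langle\xi_\alpha,\zeta\rangle\leq\E h(X,\zeta)$ and invoke Lemma~\ref{lemma:sel-domination}. Your choice of $Y=\{\xi\}+\cone$ rather than $Y=\{\xi\}$ is in fact slightly more careful, since the lemma is stated for sets in $\Lp(\sFC)$, and your explicit verification of the remaining properties matches what the paper dismisses as obvious.
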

\begin{proof}
  If $\xi_n\in\Lp(X)$ and $\xi_n\to\xi\in\Lp(\R^d)$ in
  $\sigma(\Lp,\Lp[q])$, then
  \begin{displaymath}
    \E \langle\xi,\zeta\rangle=\lim \E\langle\xi_n,\zeta\rangle\leq \E
    h(X,\zeta)
  \end{displaymath}
  for all $\zeta\in\Lp[q](\R^d)$.  Thus, $\xi$ is a selection of $X$
  by Lemma~\ref{lemma:sel-domination}. The statement concerning
  $\cone$-closedness is obvious.
\end{proof}

A sequence $X_n\in\Lp(\sFC)$, $n\geq1$, is said to converge to
$X\in\Lp(\sFC)$ \emph{scalarly} in $\sigma(\Lp,\Lp[q])$ (shortly,
scalarly) if $\E h(X_n,\zeta)\to \E h(X,\zeta)$ for all
$\zeta\in\Lp[q](\GG)$, where the convergence is understood in the
extended line $(-\infty,\infty]$. Since $\E h(X_n,\zeta)$ equals the
support function of $\Lp(X_n)$ in direction $\zeta$, this convergence
is the scalar convergence $\Lp(X_n)\to\Lp(X)$ as convex sets in
$\Lp(\R^d)$, see \cite{son:zal92}.

\section{General nonlinear set-valued expectations}
\label{sec:general-non-linear}

\subsection{Definitions}
\label{sec:definitions}

Fix $p\in[1,\infty]$ and a convex closed cone $\cone$ distinct from
the whole space. 

\begin{definition}
  \label{def:vE}
  A \emph{sublinear set-valued} expectation is a
  function $\vE:\Lp(\sFC)\mapsto\co\sF$ such that:
  \begin{enumerate}[i)]
  \item for each deterministic $a\in\R^d$, 
    \begin{equation}
      \label{eq:39}
      \vE(X+a)=\vE(X)+a
    \end{equation}
    (additivity on deterministic singletons);
  \item $\vE(F)\supset F$ for all deterministic $F\in\sFC$;
  \item $\vE(X)\subset\vE(Y)$ if $X\subset Y$
    almost surely (monotonicity);
  \item $\vE(cX)=c\vE(X)$ for all $c>0$ (homogeneity);
  \item $\vE$ is subadditive, that is,
    \begin{equation}
      \label{eq:sub-X}
      \vE(X+Y)\subset \vE(X)+\vE(Y)
    \end{equation}
    for all $p$-integrable random closed convex sets $X$ and $Y$.
  \end{enumerate}
  A \emph{superlinear set-valued} expectation $\uE$ satisfies the same
  properties with the exception of ii) replaced by $\uE(F)\subset F$
  and \eqref{eq:sub-X} replaced by the superadditivity property
  \begin{equation}
    \label{eq:sup-X}
    \uE(X+Y)\supset \uE(X)+\uE(Y)\,.
  \end{equation}
  The nonlinear expectations $\vE$ and $\uE$ are said to be \emph{law
    invariant}, if they retain their values on identically distributed
  random closed convex sets.
\end{definition}

\begin{proposition}
  \label{prop:GG}
  Nonlinear expectations on $\Lp(\sFC)$ take values from $\sFC$. 
\end{proposition}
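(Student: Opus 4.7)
The target is to show that for $X \in \Lp(\sFC)$ both $\vE(X)$ and $\uE(X)$ lie in $\sFC$. Since Definition~\ref{def:vE} already requires the expectations to map into $\co\sF$, convexity and (topological) closedness come for free, and what remains is to check the $\cone$-closedness identity $\vE(X)=\vE(X)+\cone$ (and likewise for $\uE$).

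The plan is to exploit the $\cone$-closedness of the argument together with properties (i) (shift-additivity on deterministic singletons) and (iii) (monotonicity). Concretely, fix any $c\in\cone$. Since $X$ is $\cone$-closed, $X=X+\cone$, and using $\cone+c\subset\cone$ (which follows from $\cone$ being a convex cone closed under addition) we get
\begin{displaymath}
  X+c \;=\; (X+\cone)+c \;=\; X+(\cone+c) \;\subset\; X+\cone \;=\; X \quad \text{a.s.}
\end{displaymath}
By monotonicity (iii) applied to $X+c\subset X$,
\begin{displaymath}
  \vE(X+c)\subset \vE(X),
\end{displaymath}
and by property (i) the left-hand side equals $\vE(X)+c$. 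Thus $\vE(X)+c\subset\vE(X)$ for every $c\in\cone$, i.e., $\vE(X)+\cone\subset\vE(X)$. The reverse inclusion $\vE(X)\subset\vE(X)+\cone$ is immediate from $0\in\cone$, so $\vE(X)=\vE(X)+\cone$ as claimed, and $\vE(X)\in\sFC$.

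The superlinear case is identical: properties (i) and (iii) are shared between $\vE$ and $\uE$, so the same chain $X+c\subset X \Rightarrow \uE(X)+c=\uE(X+c)\subset \uE(X)$ delivers $\uE(X)+\cone\subset\uE(X)$. The empty-set case is covered by the convention $\sFC\ni\emptyset$ and $\emptyset+\cone=\emptyset$.

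There is no real obstacle here; the argument reduces to the observation $\cone+c\subset\cone$ for $c\in\cone$, which is just the defining closure-under-addition property of a convex cone. The only place where one needs to be slightly careful is to note that we are translating $X$ by a single deterministic vector $c\in\cone$ (so property (i) applies verbatim), rather than by the whole cone at once.
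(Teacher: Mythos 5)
Your proof is correct and is essentially the paper's own argument: the paper likewise notes that for $a\in\cone$ one has $X+a\subset X$ a.s., hence $\vE(X)+a=\vE(X+a)\subset\vE(X)$ by properties (i) and (iii), so $\vE(X)\in\sFC$. Your version merely spells out the intermediate step $X+(\cone+a)\subset X+\cone=X$ and the (correct) remark that the empty set is admitted in $\sFC$.
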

\begin{proof}
  If $a\in\cone$, then $X+a\subset X$ a.s., whence
  $\vE(X)+a\subset\vE(X)$. Therefore, $\vE(X)\in\sFC$. 
\end{proof}

While the argument $X$ of nonlinear expectations is a.s. non-empty,
$\uE(X)$ may be empty and then the right-hand side of \eqref{eq:sup-X}
is also empty. However, if $\vE(X)$ is empty for some $X$, then
$\vE(\xi+\cone)=\emptyset$ for $\xi\in\Lp(X)$, hence
\begin{displaymath}
  \vE(Y)=\vE(Y+\cone)=\vE(Y-\xi+\xi+\cone)\subset
  \vE(Y-\xi)+\vE(\xi+\cone)=\emptyset
\end{displaymath}
is empty for all $p$-integrable random sets $Y$. In view of this, it
is assumed that sublinear expectations take non-empty values. We
always exclude the trivial cases, when $\vE(X)=\R^d$ or
$\uE(X)=\emptyset$ for all $X$.

The homogeneity property immediately implies that $\vE(X)$ and
$\uE(X)$ are cones if $X$ is almost surely a cone, that is, $cX=X$
a.s. for all $c>0$.  Therefore, it is only possible to conclude that
$\vE(\cone)$ is a closed convex cone, which may be strictly larger
than $\cone$.  By Proposition~\ref{prop:GG}, $\uE(\cone)$ is either
$\cone$ or is empty. 

The sublinear (respectively, superlinear) expectation is said to be
\emph{normalised} if $\vE(\cone)=\cone$ (respectively,
$\uE(\cone)=\cone$). We always have $\vE(\R^d)=\R^d$ by property ii),
and also $\uE(\R^d)=\R^d$, since $\uE(\R^d)=\uE(\R^d)+a$ for all
$a\in\R^d$, and $\uE$ is not identically empty.

The properties of the nonlinear expectations do not imply that they
preserve deterministic convex closed sets.  The family
$\{F\in\sFC:\; \vE(F)=F\}$ of invariant sets is closed under
translations, dilations by positive reals, and for Minkowski sums,
since if $\vE(F)=F$ and $\vE(F')=F'$, then
\begin{displaymath}
  F+F'\subset \vE(F+F')\subset \vE(F)+\vE(F')=F+F'.
\end{displaymath}
A nonlinear expectation is said to be \emph{constant preserving} if 
all non-empty deterministic sets from $\sFC$ are invariant. 


The superlinear and sublinear expectations form a \emph{dual pair}
if\; $\uE(X)\subset\vE(X)$ for each $p$-integrable random closed
convex set $X$.  In difference to the univariate setting, the exact
duality relation \eqref{eq:1} is useless; if $\cone=\{0\}$, then
$-\vE(-X)$ is also a sublinear expectation, where $-X=\{-x:\; x\in
X\}$ is the reflection of $X$ with respect to the origin.

For a sequence $\{F_n,n\geq1\}$ of closed sets, its \emph{lower
  limit}, $\liminf F_n$, is the set of limits for all convergent
sequences $x_n\in F_n$, $n\geq1$, and its \emph{upper limit}, $\limsup
F_n$, is the set of limits for all convergent subsequences $x_{n_k}\in
F_{n_k}$, $k\geq1$.

The sublinear expectation $\vE$ is called \emph{lower semicontinuous}
if
\begin{equation}
  \label{eq:2}
  h(\vE(X),u)\subset \liminf h(\vE(X_n),u),\quad u\in\R^d,
\end{equation}
and $\uE$ is \emph{upper semicontinuous} if 
\begin{displaymath}
  \uE(X)\supset \limsup\, \uE(X_n)
\end{displaymath}
for a sequence of random closed convex sets $\{X_n,n\geq1\}$
converging to $X$ in the chosen topology, e.g. scalarly lower
semicontinuous if $X_n$ scalarly converges to $X$. Note that the lower
semicontinuity definition is weaker than its standard variant for
set-valued functions that would require that $\vE$ is a subset of
$\liminf \vE(X_n)$, see \cite[Prop.~2.35]{hu:pap97}.

\begin{remark}
  \label{rem:restriction}
  It is possible to consider nonlinear expectations defined only on
  some special random sets, e.g., singletons or half-spaces. It is
  only required that the family of such sets is closed under
  translations, dilations by positive reals, and for Minkowski sums.

  The family $\co\sF$ is often ordered by the \emph{reverse inclusion}
  ordering; then the terminology is correspondingly adjusted, e.g.,
  the superlinear expectation becomes sublinear. However, we
  systematically consider the conventional inclusion order.
\end{remark}

\begin{remark}
  \label{rem:convex}
  Motivated by financial applications, it is possible to replace the
  homogeneity and sub- (super-) additivity properties with 
  convexity or concavity, e.g.,
  \begin{displaymath}
    \uE(\lambda X+(1-\lambda)Y)\supset
    \lambda\;\uE(X)+(1-\lambda)\;\uE(Y),
    \quad \lambda\in[0,1].
  \end{displaymath}
  However, then $\uE$ can be turned into a superlinear expectation
  $\uE'$ for random sets in the space $\R^{d+1}$ by letting
  \begin{displaymath}
    \uE'(\{t\}\times X)=\{t\}\times t\;\uE(t^{-1}X), \quad t>0.
  \end{displaymath}
  The arguments of $\uE'$ are random closed convex sets
  $Y=\{t\}\times X$; they form a family closed for dilations, 
  Minkowski sums and translations by singletons from
  $\R_+\times\R^d$. Note that selections of $\{t\}\times X$ are given
  by $(t,\xi)$ with $\xi$ being a selection of $X$. In view of this,
  all results in the homogeneous case apply to the convex case if
  dimension is increased by one.
\end{remark}

\subsection{Examples}
\label{sec:examples}

The simplest example is provided by the selection expectation, which
is linear and law invariant on all integrable random convex sets. 

\begin{example}[Fixed points and support]
  \label{ex:fp-nonlinear}
  Let 
  \begin{displaymath}
    F_X=\big\{x:\; \Prob{x\in X}=1\big\}
  \end{displaymath}
  denote the set of \emph{fixed points} of a random closed set $X$.
  If $X$ is almost surely convex, then $F_X$ is also almost surely
  convex, and if $X$ is compact with a positive probability, then
  $F_X$ is compact. It is easy to see that $F_{X+Y}\supset F_X+F_Y$,
  whence $\uE(X)=F_X$ is a law invariant superlinear expectation. With
  a similar idea, it is possible to define the sublinear expectation
  $\vE(X)=\supp X$ as the \emph{support} of $X$, which is the set of
  points $x$ such that $X$ hits any open neighbourhood of $x$ with a
  positive probability. By the monotonicity property,
  $\{x\}=\uE(\{x\})\subset \uE(X)$ for any $x\in F_X$, whence
  $\uE(X)=F_X$ is a subset of any other normalised superlinear
  expectation of $X$. By a similar argument, $\vE(X)=\supp X$
  dominates any other constant preserving sublinear expectation. 
\end{example}

\begin{example}[Half-lines]
  \label{ex:h-line}
  Fix $\cone=\{0\}$ and let $X=[\xi,\infty)\subset \R$. Then
  $\uE(X)=[\ve(\xi),\infty)$ is superlinear if and only if $\ve(\xi)$
  is sublinear in the usual sense of \eqref{eq:univ-sub}. For random
  sets of the type $Y=(-\infty,\xi]$, the superlinearity of
  $\uE(Y)=(-\infty,\ue(\xi)]$ corresponds to the univariate
  superlinearity of $\ue(\xi)$. Therefore, the nature of a set-valued
  nonlinear expectation depends not only on the background numerical
  one, but also on the construction of relevant random sets. The
  situation becomes more complicated in higher dimensions, where
  complements of convex sets are not necessarily convex and the
  Minkowski sum of complements is not equal to the complement of the
  sum.
\end{example}

\begin{example}[Random intervals]
  \label{ex:rand-intervals}
  Let $X=[\eta,\xi]$ be a random interval on the line with
  $\xi,\eta\in\Lp(\R)$, and let $\cone=\{0\}$. Then
  $\vE(X)=[\ue(\eta),\ve(\xi)]$ is the interval formed by a numerical
  superlinear expectation of $\eta$ and a numerical sublinear
  expectation of $\xi$ such that $\ue(\xi)\leq \ve(\xi)$ for all
  $\xi$, e.g., if $\ue$ and $\ve$ are an exact dual pair. The
  superlinear expectation $\uE(X)=[\ve(\eta),\ue(\xi)]$ may be empty.
\end{example}


\subsection{Expectations of singletons and half-spaces}
\label{sec:transl-equiv}

The additivity property on deterministic singletons immediately yields
the following useful fact.

\begin{lemma}
  \label{lemma:primal}
  We have $\vE(X)=\{x\in\R^d:\; \vE(X-x)\ni 0\}$, and the same holds
  for the superlinear expectation. 
\end{lemma}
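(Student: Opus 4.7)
The plan is to reduce the claim to a single application of property~i) of Definition~\ref{def:vE}, namely the translation covariance $\vE(X+a)=\vE(X)+a$ for deterministic $a\in\R^d$.

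Concretely, I would fix an arbitrary $x\in\R^d$ and set $a=-x$ in property~i), obtaining the identity
\begin{equation*}
  \vE(X-x)=\vE(X)-x.
\end{equation*}
From here the equivalence
\begin{equation*}
  0\in\vE(X-x)\ \Longleftrightarrow\ 0\in\vE(X)-x\ \Longleftrightarrow\ x\in\vE(X)
\end{equation*}
is a tautology about translated sets, and running over all $x\in\R^d$ yields the claimed set equality $\vE(X)=\{x:\vE(X-x)\ni 0\}$.

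Since the analogous translation property $\uE(X+a)=\uE(X)+a$ is part of the definition of a superlinear set-valued expectation (property~i) is the same for $\uE$), exactly the same substitution $a=-x$ and the same two-step chain of equivalences produce the corresponding identity for $\uE$; the only minor caveat worth a one-line remark is that $\uE(X)$ may be empty, in which case both sides of the identity are empty and the statement holds trivially.

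There is essentially no obstacle here: the lemma is a direct bookkeeping consequence of translation covariance, with no need to invoke homogeneity, monotonicity, or (sub/super)additivity, and no continuity or integrability subtleties enter. The only thing to be careful about is to keep the direction of the translation straight (i.e.\ to apply property~i) with $a=-x$ rather than $a=x$) so that $0$ ends up on the correct side of the membership relation.
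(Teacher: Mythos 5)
Your proof is correct and matches the paper's intent exactly: the paper states the lemma as an immediate consequence of the additivity property on deterministic singletons, which is precisely the substitution $a=-x$ you carry out. The remark about the empty case for $\uE$ is a harmless and accurate addition.
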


Fix $\cone=\{0\}$.  Restricted to singletons, the sublinear
expectation is a homogeneous map $\vE:\Lp(\R^d)\mapsto\co\sF$ that
satisfies
\begin{displaymath}
  \vE(\{\xi+\eta\})\subset \vE(\{\xi\})+\vE(\{\eta\}),\quad
  \xi,\eta\in\Lp(\R^d). 
\end{displaymath}
Note that $\vE(\{\xi\})$ is not necessarily a singleton.  If
$\vE(\{\xi\})$ is a singleton for each $\xi\in\Lp(\R^d)$, then $\vE$
is linear on $\Lp(\R^d)$. Assuming its lower semicontinuity, it
becomes the usual (linear) expectation. The following result concerns
the superlinear expectation of singletons. For a general cone $\cone$,
a similar result holds with singletons replaced by sets $\xi+\cone$. 

\begin{proposition}
  \label{prop:singl-sup}
  Let $\cone=\{0\}$.  For each $\xi\in\Lp(\R^d)$ and any normalised
  superlinear expectation $\uE$, the set $\uE(\{\xi\})$ is either
  empty or a singleton, and $\uE$ is additive on the family of all
  singletons with non-empty $\uE(\{\xi\})$.
\end{proposition}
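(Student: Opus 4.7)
The plan is as follows. Normalisation $\uE(\cone)=\cone=\{0\}$ together with the translation invariance property of Definition~\ref{def:vE}(i) immediately gives $\uE(\{x\})=\uE(\{0\})+x=\{x\}$ for every deterministic $x\in\R^d$. The pivotal observation comes from applying superadditivity to the decomposition $\xi+(-\xi)=0$:
\begin{displaymath}
  \uE(\{\xi\})+\uE(\{-\xi\})\,\subset\,\uE(\{\xi\}+\{-\xi\})\,=\,\uE(\{0\})\,=\,\{0\},
\end{displaymath}
so the Minkowski sum of $\uE(\{\xi\})$ and $\uE(\{-\xi\})$ is contained in a single point.

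If both $\uE(\{\xi\})$ and $\uE(\{-\xi\})$ are non-empty, then each must be a singleton: two distinct points in one summand paired with any point of the other would produce two distinct points in the sum, contradicting the inclusion in $\{0\}$. Hence $\uE(\{\xi\})=\{a\}$ and $\uE(\{-\xi\})=\{-a\}$ for some $a\in\R^d$, which is the clean symmetric case.

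The hardest situation, and the main obstacle, is when $\uE(\{\xi\})$ is non-empty while $\uE(\{-\xi\})$ is empty, since then the Minkowski-sum inclusion above becomes vacuous. My plan here is to scalarise: for each direction $v\in\R^d$ the numerical functional $\psi_v(\eta)=h(\uE(\{\eta\}),v)$ is positively homogeneous, superadditive, and satisfies $\psi_v(\eta+a)=\psi_v(\eta)+\langle a,v\rangle$, while the dual $\phi_v(\eta)=-h(\uE(\{\eta\}),-v)$ enjoys the same properties with subadditivity in place of superadditivity, and $\phi_v\leq\psi_v$ whenever both are finite. A Peng-type pairing using $\psi_v(\eta)+\psi_v(-\eta)\leq 0$, $\phi_v(\eta)+\phi_v(-\eta)\geq 0$, and $\phi_v(-\eta)\leq\psi_v(-\eta)$ then collapses $\phi_v$ and $\psi_v$ to a single linear expression, forcing the width of $\uE(\{\xi\})$ in direction $v$ to vanish and hence $\uE(\{\xi\})$ to be a singleton. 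The delicate point is that this chain of inequalities a priori requires $\uE(\{-\xi\})$ to be non-empty; in the asymmetric case the argument must therefore be supplemented, e.g.\ by invoking monotonicity on enlarged arguments $\{\xi\}\subset X$ chosen so as to transport the symmetric-case conclusion.

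Finally the additivity claim on the family of singletons with non-empty $\uE(\{\xi\})$ is a direct consequence of the first part: if $\uE(\{\xi\})=\{a\}$ and $\uE(\{\eta\})=\{b\}$, then superadditivity yields $\uE(\{\xi+\eta\})\supset\{a+b\}$, so $\xi+\eta$ lies in the non-emptiness domain; the first part then ensures that $\uE(\{\xi+\eta\})$ is itself a singleton, whence $\uE(\{\xi+\eta\})=\{a+b\}=\uE(\{\xi\})+\uE(\{\eta\})$.
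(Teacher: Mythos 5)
The parts of your argument that are actually complete coincide with the paper's proof. The paper's entire argument for the dichotomy is the single inclusion
\begin{displaymath}
  \{0\}=\uE(\{0\})\supset\uE(\{\xi\})+\uE(\{-\xi\}),
\end{displaymath}
from which it reads off that $\uE(\{\xi\})$ is empty or a singleton with $\uE(\{-\xi\})=-\uE(\{\xi\})$; the additivity claim is then obtained exactly as in your last paragraph, with superadditivity giving $\uE(\{\xi+\xi'\})\supset\{a+b\}$ and the dichotomy upgrading this to equality. So for the symmetric case and for additivity you are reproducing the paper's route verbatim.

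The problem is your asymmetric case. You are right that when $\uE(\{-\xi\})=\emptyset$ the displayed inclusion is vacuous --- the paper's proof silently passes over this --- but your proposed repair does not close the gap. The Peng-type chain requires $h(\uE(\{-\xi\}),\pm v)$ to be the support function of a non-empty set, which is precisely what fails; and ``invoking monotonicity on enlarged arguments'' cannot rescue it, since monotonicity only yields $\uE(\{\xi\})\subset\uE(X)$ for $X\supset\{\xi\}$ (an upper bound by a possibly large set), while pairing $\{\xi\}$ against any $Y$ with $\{\xi\}+Y$ deterministic again needs $\uE(Y)\neq\emptyset$ to give information --- the same non-emptiness issue in disguise. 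Indeed, the asymmetric case does not seem to follow from the listed axioms at all: on the line, fix $\gamma\geq 0$ with $\E\gamma=1$ and $\gamma\not\equiv 1$, and set $\uE([\alpha,\beta])=[\E\alpha,\E(\gamma\beta)]$ for a random interval with endpoints $\alpha\leq\beta$. This map is additive on deterministic singletons, monotone, homogeneous, superadditive (with equality), normalised, and satisfies $\uE(F)=F$ for deterministic $F$, yet $\uE(\{\xi\})=[\E\xi,\E(\gamma\xi)]$ is a nondegenerate interval whenever $\E(\gamma\xi)>\E\xi$, in which case $\uE(\{-\xi\})=\emptyset$. So the dichotomy is really only available under the tacit assumption that $\uE(\{\xi\})$ and $\uE(\{-\xi\})$ are simultaneously non-empty, which is how the paper's two-line argument must be read; you should either prove the statement under that reading or record the extra hypothesis explicitly, and drop the scalarisation supplement, which as sketched is not a proof.
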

\begin{proof}
  By \eqref{eq:sup-X} applied to $X=\{\xi\}$ and $Y=\{-\xi\}$, we
  have 
  \begin{displaymath}
    \{0\}= \uE(\{0\})\supset \uE(\{\xi\})+\uE(\{-\xi\}),
  \end{displaymath}
  whence $\uE(\{\xi\})$ is either empty or is a singleton, and then
  $\uE(\{-\xi\})=-\uE(\{\xi\})$. If $\uE(\{\xi\})$ and $\uE(\{\xi'\})$
  are singletons (and so are non-empty) for $\xi,\xi'\in\Lp(\R^d)$,
  then
  \begin{displaymath}
    \uE(\{\xi+\xi'\})\supset \uE(\{\xi\})+\uE(\{\xi'\}),
  \end{displaymath}
  whence the inclusion turns into the equality. 
\end{proof}

In view of Proposition~\ref{prop:singl-sup} and imposing the upper
semicontinuity property on the superlinear expectation, $\uE(\{\xi\})$
equals $\{\E \xi\}$ or is empty for each $p$-integrable $\xi$. The
family of $\xi\in\Lp(\R^d)$ such that $\uE(\{\xi\})\neq\emptyset$ is a
convex cone in $\Lp(\R^d)$.

\begin{proposition}
  \label{prop:whole-space}
  If $X+X'=\R^d$ a.s. for $X'$ being an independent copy of $X$, then
  $\vE(X)=\R^d$ for each law invariant sublinear expectation $\vE$. 
\end{proposition}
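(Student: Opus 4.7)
My plan is a short direct argument combining axioms ii) and v) of Definition~\ref{def:vE}, law invariance, and the elementary identity $A+A=2A$ valid for any convex set $A$.

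First I would observe that since $X+X'=\R^d$ almost surely, the random set $X+X'$ takes the constant value $\R^d\in\sFC$, hence lies in $\Lp(\sFC)$ and $\vE(X+X')$ is defined. Property ii) applied to the deterministic set $\R^d$ yields $\vE(\R^d)\supset\R^d$, and because $\vE$ takes values in $\co\sF$ we conclude $\vE(\R^d)=\R^d$. Law invariance, applied to $X+X'$ and the constant random set $\R^d$ (which have the same distribution), then gives $\vE(X+X')=\R^d$.

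Next I would invoke the subadditivity property \eqref{eq:sub-X} to obtain $\R^d=\vE(X+X')\subset\vE(X)+\vE(X')$. Since $X$ and $X'$ are identically distributed and $\vE$ is law invariant, $\vE(X')=\vE(X)$, so $\R^d\subset\vE(X)+\vE(X)$. Convexity of $\vE(X)$ (values lie in $\co\sF$) forces $\vE(X)+\vE(X)=2\vE(X)$, and so $\R^d\subset 2\vE(X)$, which yields $\vE(X)=\R^d$.

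I do not foresee a genuine obstacle. The only point worth flagging is that independence of $X$ and $X'$ is never used in the proof itself: it enters the statement purely to make the hypothesis $X+X'=\R^d$ a.s.\ a substantive condition (as illustrated by a random half-space whose normal is non-atomic, cf.\ Example~\ref{ex:non-h-approx}). The proof relies only on subadditivity, law invariance, and convexity of the values of $\vE$.
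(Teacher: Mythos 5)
Your proof is correct and is essentially identical to the paper's one-line argument: $\R^d=\vE(\R^d)=\vE(X+X')\subset\vE(X)+\vE(X')=2\vE(X)$, using subadditivity, law invariance to identify $\vE(X')$ with $\vE(X)$, and convexity for $A+A=2A$. Your added observations (that $\vE(\R^d)=\R^d$ follows from property ii) plus $\vE$ taking values in $\co\sF$, and that independence is not actually used in the argument) are accurate but the route is the same.
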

\begin{proof}
  By subadditivity and law invariance,
  \begin{displaymath}
  \R^d=\vE(\R^d)=\vE(X+X')\subset \vE(X)+\vE(X')=2 \vE(X). \qedhere
\end{displaymath}
\end{proof}

Proposition~\ref{prop:whole-space} applies if $X=H_\eta(0)$ is a
half-space with a non-atomic $\eta$, so that each law invariant
sublinear expectation on such random sets takes trivial values.

\begin{example}
  \label{ex:lower-quadrant}
  Let $\cone=\R_-^d$. If $\vE(\xi+\R_-^d)=\vev(\xi)+\R_-^d$ for a
  vector-valued function $\vev:\Lp(\R^d)\mapsto\R^d$, then $\vev(\xi)$
  splits into the vector of superlinear expectations applied to the
  components of $\xi=(\xi_1,\dots,\xi_d)$, see
  Theorem~\ref{thr:split}. 
\end{example}

\subsection{Nonlinear expectations of random convex functions}
\label{sec:nonl-expect-rand}

A lower semicontinuous convex function $f:\R^d\mapsto[0,\infty]$
yields a convex set $T_f$ in $\R^{d+1}$ such that
\begin{displaymath}
  h(T_f,(t,x))=
  \begin{cases}
    t f(x/t), & t>0,\\
    0, & \text{otherwise}.
  \end{cases}
\end{displaymath}
The obtained support function is called the \emph{perspective
  transform} of $f$, see \cite{hir:lem93}. Note that $f$ can be
recovered by letting $t=1$ in the support function of $T_f$. 

If $\xi(x)$, $x\in\R^d$, is a random nonnegative lower semicontinuous
convex function, then its sublinear expectation can be defined as
$\vE(\xi)(x)=h(\vE(T_\xi),(1,x))$, and the superlinear one is defined
similarly. With this definition, all constructions from this paper
apply to random functions.

\section{Extensions of nonlinear expectations}
\label{sec:monotonicity}

\subsection{Minimal extension}
\label{sec:minimal-extension}

The \emph{minimal extension} of a sublinear set-valued expectation
$\vE$ on random sets from $\Lp(\sFC)$ is defined by
\begin{equation}
  \label{eq:13}
  \uvE(X)=\cco \bigcup_{\xi\in\Lp(X)} \vE(\xi+\cone),
\end{equation}
where $\cco$ denotes the closed convex hull operation.  It extends a
sublinear expectation defined on sets $\xi+\cone$ to all
$p$-integrable random closed sets $X$ such that $X=X+\cone$ a.s.  In
terms of support functions, the minimal extension is given by
\begin{equation}
  \label{eq:23}
  h(\uvE(X),u)=\sup_{\xi\in\Lp(X)} h(\vE(\xi+\cone),u),
  \quad u\in\GG.
\end{equation}

\begin{proposition}
  \label{prop:sub-extension}
  If $\vE$ is a sublinear expectation defined on random sets
  $\xi+\cone$ for $\xi\in\Lp(\R^d)$, then its minimal extension
  \eqref{eq:13} is a sublinear expectation.
\end{proposition}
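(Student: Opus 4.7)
The plan is to verify each of the five properties of Definition~\ref{def:vE} in turn for $\uvE$. Four of them follow routinely from the corresponding properties of $\vE$, while subadditivity requires a measurable decomposition of selections.

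For translation equivariance (i), note that $\Lp(X+a)=\Lp(X)+a$ and $\vE((\xi+a)+\cone)=\vE(\xi+\cone)+a$ by property (i) of $\vE$; since $\cco$ commutes with translation, $\uvE(X+a)=\uvE(X)+a$. For monotonicity (iii), $X\subset Y$ a.s.\ gives $\Lp(X)\subset\Lp(Y)$, whence the union in \eqref{eq:13} grows with $Y$. For homogeneity (iv), $c\cone=\cone$ for $c>0$ yields $c\xi+\cone=c(\xi+\cone)$, so homogeneity of $\vE$ combined with the fact that $\cco$ respects positive scaling gives $\uvE(cX)=c\,\uvE(X)$. For property (ii), each point $x$ of a deterministic $F\in\sFC$ is a constant $p$-integrable selection, so property (ii) of $\vE$ applied to the deterministic set $\{x\}+\cone$ gives $\vE(\{x\}+\cone)\supset\{x\}+\cone\ni x$; taking the union over $x\in F$ shows $F\subset\uvE(F)$.

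The crux is subadditivity (v). Given $\zeta\in\Lp(X+Y)$, a measurable selection argument applied to the non-empty convex random set $\{(x,y)\in X\times Y:\;x+y=\zeta\}$---choosing, say, the minimum-norm element relative to fixed $\xi_0\in\Lp(X)$ and $\eta_0\in\Lp(Y)$---produces $\xi\in\Lp(X)$ and $\eta\in\Lp(Y)$ with $\xi+\eta=\zeta$. Because $\cone+\cone=\cone$, we have $\zeta+\cone=(\xi+\cone)+(\eta+\cone)$, and subadditivity of $\vE$ gives
\begin{displaymath}
  \vE(\zeta+\cone)\subset\vE(\xi+\cone)+\vE(\eta+\cone)\subset\uvE(X)+\uvE(Y).
\end{displaymath}
Since $\uvE(X)+\uvE(Y)$ is closed (by the closed Minkowski sum convention of Section~\ref{sec:integr-rand-sets}) and convex, taking the union over $\zeta\in\Lp(X+Y)$ and then $\cco$ preserves this inclusion, establishing $\uvE(X+Y)\subset\uvE(X)+\uvE(Y)$. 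Non-emptiness of $\uvE(X)$ and membership in $\sFC$ follow from the analogous properties of $\vE(\xi+\cone)$ together with $\cone$-closedness being preserved by unions and by $\cco$.

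The principal technical obstacle is guaranteeing that the decomposed $\xi,\eta$ are genuinely $p$-integrable, not merely measurable. The cleanest route invokes the Hiai--Umegaki-type identity $\Lp(X+Y)=\clo(\Lp(X)+\Lp(Y))$ valid for $p$-integrable convex random sets; if only density rather than an exact decomposition is available, one would instead argue through the support function representation \eqref{eq:23}, passing to a limit using suitable upper semicontinuity of $\xi\mapsto h(\vE(\xi+\cone),u)$ in the $\sigma(\Lp,\Lp[q])$-topology.
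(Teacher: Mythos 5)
Your proposal is correct and, on the decisive point, takes the same route as the paper: properties (i)--(iv) are handled exactly as in the printed proof, and the subadditivity ultimately rests on the identity $\Lp(X+Y)=\clo\bigl(\Lp(X)+\Lp(Y)\bigr)$ (\cite[Prop.~2.1.6]{mo1}), which is precisely the paper's one-line justification. Your primary measurable-selection argument for (v) is the weaker of your two options, for the reasons you anticipate: since $X+Y$ is the \emph{closed} Minkowski sum, the fibre $\{(x,y)\in X\times Y:\ x+y=\zeta\}$ may be empty on a set of positive measure, and even where it is non-empty the selected pieces need not be $p$-integrable; so the decomposition identity should be the argument, not a fallback. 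One small correction to your closing remark: to pass from the dense algebraic sum $\Lp(X)+\Lp(Y)$ to its closure one needs \emph{lower} semicontinuity of $\xi\mapsto h(\vE(\xi+\cone),u)$ (so that the value at a limit point is bounded by the $\liminf$ along the approximating sequence), not upper semicontinuity.
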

\begin{proof}
  The additivity of $\uvE$ on deterministic singletons follows from
  this property of $\vE$. For a deterministic $F\in\co\sFC$,
  \begin{displaymath}
    \uvE(F) \supset \cco \bigcup_{x\in F} \vE(x+\cone)
    \supset \cco \bigcup_{x\in F} (x+\cone)=F.
  \end{displaymath}
  The homogeneity and monotonicity properties of $\uvE$ are
  obvious. The subadditivity follows from the fact that $\Lp(X+Y)$ is
  the $\Lp$-closure of the sum $\Lp(X)+\Lp(Y)$, see
  \cite[Prop.~2.1.6]{mo1}. 
\end{proof}

\subsection{Maximal extension}
\label{sec:maximal-extension}

Extending a superlinear expectation $\uE$ from its values on half-spaces
yields its \emph{maximal extension}
\begin{equation}
  \label{eq:14}
  \ouE(X)=\bigcap_{\eta\in\Lp[0](\Sphere\cap\GG)} \uE(\HX),
\end{equation}
being the intersection of superlinear expectations of random
half-spaces $\HX=H_\eta(h(X,\eta))$ almost surely containing
$X\in\Lp(\sFC)$. Recall that $\GG=\cone^o$. 

\begin{proposition}
  \label{pro:ext-super}
  If $\uE$ is superlinear on half-spaces with the same normal, that is,
  \begin{equation}
    \label{eq:18}
    \uE(H_\eta(\beta+\beta'))\supset \uE(H_\eta(\beta))+\uE(H_\eta(\beta'))
  \end{equation}
  for $\beta,\beta'\in\Lp(\R)$ and $\eta\in\Lp[0](\Sphere\cap\GG)$, and is
  scalarly upper semicontinuous on half-spaces with the same normal,
  that is,
  \begin{displaymath}
    \uE(H_\eta(\beta))\supset \limsup \uE(H_\eta(\beta_n))
  \end{displaymath}
  if $\beta_n\to\beta$ in $\sigma(\Lp,\Lp[q])$, then its maximal
  extension $\ouE(X)$ given by \eqref{eq:14} is superlinear and upper
  semicontinuous with respect to the
  scalar convergence of random closed convex sets. If $\uE$ is law
  invariant on half-spaces, then $\ouE$ is law invariant. 
\end{proposition}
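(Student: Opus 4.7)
The plan is to verify each of the five properties in Definition~\ref{def:vE} for $\ouE$ (with property v) replaced by the superadditivity \eqref{eq:sup-X}), then the scalar upper semicontinuity, and finally law invariance, reducing each to the corresponding property of $\uE$ on half-spaces with a fixed normal. The algebraic backbone I will use is the set of identities $H_\eta(h(X+a,\eta))=H_\eta(h(X,\eta))+a$ for deterministic $a$, $H_\eta(h(cX,\eta))=cH_\eta(h(X,\eta))$ for $c>0$, and $H_\eta(\beta+\beta')=H_\eta(\beta)+H_\eta(\beta')$, together with the representation $X=\bigcap_\eta \HX$ from \eqref{eq:30}.

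Properties i) (translation), iii) (monotonicity, via monotonicity of $h(X,\eta)$ in $X$), and iv) (positive homogeneity) transfer pointwise in $\eta$ from $\uE$ to $\ouE$ once one takes the intersection over $\eta\in\Lp[0](\Sphere\cap\GG)$. For ii), specialising to a deterministic $\eta=u\in\Sphere\cap\GG$ makes $H_u(h(F,u))$ a deterministic half-space containing $F$, and property ii) of $\uE$ yields $\uE(H_u(h(F,u)))\subset H_u(h(F,u))$; intersecting over all $u\in\Sphere\cap\GG$ and invoking \eqref{eq:30} for the deterministic $\cone$-closed set $F$ proves $\ouE(F)\subset F$. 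For superadditivity, hypothesis \eqref{eq:18} applied to $\beta=h(X,\eta)$ and $\beta'=h(Y,\eta)$ produces
\begin{displaymath}
\uE(H_\eta(h(X+Y,\eta)))\supset \uE(H_\eta(h(X,\eta)))+\uE(H_\eta(h(Y,\eta)))
\end{displaymath}
for every $\eta$, and the elementary inclusion $\bigcap_\eta(A_\eta+B_\eta)\supset (\bigcap_\eta A_\eta)+(\bigcap_\eta B_\eta)$ completes the step.

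For scalar upper semicontinuity, suppose $X_n\to X$ scalarly and fix $\eta\in\Lp[0](\Sphere\cap\GG)$. For any $\gamma\in\Lp[q](\R_+)$ the product $\zeta=\gamma\eta$ lies in $\Lp[q](\GG)$, so scalar convergence gives $\E[\gamma h(X_n,\eta)]=\E h(X_n,\zeta)\to\E h(X,\zeta)=\E[\gamma h(X,\eta)]$; hence $h(X_n,\eta)\to h(X,\eta)$ in $\sigma(\Lp,\Lp[q])$ on the set $\{h(X,\eta)<\infty\}$, while on its complement the half-space is $\R^d$ and contributes the trivial $\uE(\R^d)=\R^d$ to the intersection. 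The assumed scalar upper semicontinuity of $\uE$ on half-spaces with fixed normal then gives
\begin{displaymath}
\uE(\HX)\supset \limsup \uE(H_\eta(h(X_n,\eta)))\supset \limsup \ouE(X_n),
\end{displaymath}
and intersecting over $\eta$ yields $\ouE(X)\supset\limsup\ouE(X_n)$.

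For law invariance, let $X$ and $X'$ be identically distributed. For each $\eta\in\Lp[0](\Sphere\cap\GG)$ one can invoke richness of the underlying probability space to produce $\eta'\in\Lp[0](\Sphere\cap\GG)$ with $(X,\eta)$ and $(X',\eta')$ identically distributed, so that $\HX$ and $H_{\eta'}(h(X',\eta'))$ have the same law; the assumed law invariance of $\uE$ on half-spaces then gives $\uE(\HX)=\uE(H_{\eta'}(h(X',\eta')))$, and since the correspondence $\eta\leftrightarrow\eta'$ is symmetric, the two intersections defining $\ouE(X)$ and $\ouE(X')$ coincide. The main obstacle I anticipate is the upper semicontinuity step, where a clean treatment of possibly non-integrable $h(X,\eta)$ demands either restricting the intersection to directions along which $h(X,\eta)\in\Lp$ and showing the restricted intersection equals $\ouE(X)$, or a careful truncation combined with the trivial contribution of directions along which the half-space degenerates to $\R^d$.
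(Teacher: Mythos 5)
Your proposal is correct and follows essentially the same route as the paper: each axiom is transferred from $\uE$ on half-spaces to $\ouE$ via the identities for $\HX$, property ii) is checked on deterministic normals, superadditivity uses \eqref{eq:18} with $\beta=h(X,\eta)$, $\beta'=h(Y,\eta)$ together with the inclusion $\bigcap_\eta(A_\eta+B_\eta)\supset(\bigcap_\eta A_\eta)+(\bigcap_\eta B_\eta)$, and upper semicontinuity is obtained directionwise from the half-space hypothesis. You are in fact somewhat more explicit than the paper about deducing $h(X_n,\eta)\to h(X,\eta)$ in $\sigma(\Lp,\Lp[q])$ from scalar convergence and about the integrability caveat, which the paper passes over silently.
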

\begin{proof}
  The additivity on deterministic singletons follows from the fact
  that $\HX[X+a]=\HX+a$ for all $a\in\R^d$. If $F\in\sFC$ is
  deterministic, then 
  \begin{displaymath}
    \ouE(F) \subset \bigcap_{u\in\Sphere\cap\GG} \uE(\HuX[F])
    \subset \bigcap_{u\in\Sphere\GG} \HuX[F]=F.
  \end{displaymath}
  The homogeneity and monotonicity properties of the extension are
  obvious.  For two $p$-integrable random closed convex sets $X$ and
  $Y$, \eqref{eq:18} yields that
  \begin{align*}
    \ouE(X+Y)\;&=\; \bigcap_{\eta\in\Lp[0](\Sphere\cap\GG)} 
    \uE(H_\eta(h(X,\eta)+h(Y,\eta)))\\
    &\supset\; \bigcap_{\eta\in\Lp[0](\Sphere\cap\GG)} 
    \uE(\HX)+\uE(\HX[Y])\\
    &\supset\; \bigcap_{\eta\in\Lp[0](\Sphere\cap\GG)} 
    \uE(\HX)+ \bigcap_{\eta\in\Lp[0](\Sphere\cap\GG)}\uE(\HX[Y])\\
    &=\; \ouE(X)+\ouE(Y).
  \end{align*}
  Assume that $X_n$ scalarly converges to $X$. Let
  $x_{n_k}\in\uE(X_{n_k})$ and $x_{n_k}\to x$ for some $x$. Then
  $x_{n_k}\in\uE(H_\eta(X_{n_k}))$ for all
  $\eta\in\Lp[0](\Sphere\cap\GG)$. Since $h(X_{n_k},\eta)\to h(X,\eta)$ in
  $\sigma(\Lp,\Lp[q])$, upper semicontinuity on half-spaces yields
  that $\uE(\HX)\supset \limsup \uE(H_\eta(X_{n_k}))$, whence
  $x\in\uE(\HX)$ for all $\eta$. Therefore, $x\in\ouE(X)$, confirming
  the upper semicontinuity of the maximal extension. The law
  invariance property is straightforward. 
\end{proof}

It is possible to let $\eta$ in \eqref{eq:14} be deterministic and define
\begin{equation}
  \label{eq:9}
  \ouEnot(X)=\bigcap_{u\in\Sphere\cap\GG} \uE(\HuX).
\end{equation}
With this \emph{reduced maximal extension}, the superlinear
expectation is extended from its values on half-spaces with
deterministic normal vectors. 
Note that the reduced maximal extension may be equal to the whole
space, e.g., for $X=H_\eta(0)$ being a half-space with a
nondeterministic normal. It is obvious that $\uE(X)\subset
\ouE(X)\subset \ouEnot(X)$ and $\ouEnot$ is constant preserving. 
The reduced maximal extension is particularly useful for Hausdorff
approximable random closed sets. 

\subsection{Exact nonlinear expectations}
\label{sec:exact-nonl-expect}

It is possible to apply the maximal extension to the sublinear
expectation and the minimal extension to the superlinear one,
resulting in $\ovE$ and $\uuE$.  The monotonicity property yields
that, for each $p$-integrable random closed set $X$,
\begin{equation}
  \label{eq:10}
  \uvE(X) \subset \vE(X) \subset \ovE(X)\subset \ovEnot(X).
\end{equation}
It is easy to see that each extension is an idempotent operation,
e.g., the minimal extension of $\uvE$ coincides with $\uvE$.

A nonlinear sublinear expectation is said to be \emph{minimal}
(respectively, maximal) if it coincides with its minimal
(respectively, maximal) extension. The superlinear expectation is said
to be \emph{reduced maximal} if $\uE$ coincides with $\ouEnot$.
Since random convex closed sets can be represented either as families
of their selections or as intersections of half-spaces, the minimal
representation may be considered a primal representation of an exact
nonlinear expectation, while the maximal representation becomes the
dual one. 

If \eqref{eq:10} holds with the equalities, then $\vE$ is said to be
\emph{exact}. The same applies to superlinear expectations.  Note that
the selection expectation is exact on all integrable random closed
convex sets, its minimality corresponds to \eqref{eq:3} and maximality
becomes \eqref{eq:16}.

\section{Sublinear set-valued expectations}
\label{sec:subl-expect}

\subsection{Duality for minimal sublinear expectations}
\label{sec:minim-subl-expect}

The minimal sublinear expectation is determined by its restriction on
random sets $\xi+\cone$; the following result characterises such a
restriction. 

\begin{lemma}
  \label{thr:dsub}
  A map $(\xi+\cone)\mapsto \vE(\xi+\cone)\in \co\sF$ for
  $\xi\in\Lp(\R^d)$ is a $\sigma(\Lp,\Lp[q])$-lower semicontinuous
  normalised sublinear expectation if and only if
  $h(\vE(\xi+\cone),u)=\infty$ for $u\notin\GG=\cone^o$, and
  \begin{equation}
    \label{eq:17}
    h(\vE(\xi+\cone),u)
    =\sup_{\zeta\in\sZ_u, \E\zeta=u}\E\langle\zeta,\xi\rangle,\quad
    u\in\GG, 
  \end{equation}
  where $\sZ_u$, $u\in\GG$, are convex $\sigma(\Lp[q],\Lp)$-closed
  cones in $\Lp[q](\GG)$, such that $\{\E\zeta:\;
  \zeta\in\sZ_u\}=\{tu:\; t\geq0\}$ for all $u\neq0$, $\sZ_{cu}=\sZ_u$
  for all $c>0$, $\sZ_0=\{0\}$, and
  \begin{equation}
    \label{eq:21}
    \sZ_{u+v}\subset \sZ_u+\sZ_v,\quad u,v\in\GG.
  \end{equation}
\end{lemma}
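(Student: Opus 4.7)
The key functional is $\phi_u(\xi) := h(\vE(\xi+\cone),u)$, and the lemma is essentially a bipolar representation of $\phi_u(\cdot)$ in $\xi$. The argument splits into sufficiency (construct $\vE$ from the cones) and necessity (extract the cones from $\vE$).

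\textbf{Sufficiency.} Given cones $\{\sZ_u\}_{u\in\GG}$ with the stated properties, define $\phi_u$ by the right-hand side of \eqref{eq:17} on $\GG$ and $+\infty$ off $\GG$, and let $\vE(\xi+\cone)$ be the closed convex set with support function $\phi_u$. The $+\infty$-values force the recession cone of $\vE(\xi+\cone)$ to contain $\cone$, placing it in $\sFC$. Each axiom and lower semicontinuity follows directly: additivity on singletons from $\E\langle\zeta,\xi+a\rangle = \E\langle\zeta,\xi\rangle + \langle u,a\rangle$ using $\E\zeta = u$; the inclusion $\vE(F)\supset F$ and normalisation from the sup collapsing to $\langle u,x\rangle$ on $F = x+\cone$ (together with $\sZ_0 = \{0\}$); monotonicity from $\zeta\in\Lp[q](\GG)$ and $\cone = \GG^o$; homogeneity from linearity of $\E$; subadditivity from subadditivity of sup over the same constraint set; l.s.c.\ from being a sup of $\sigma(\Lp,\Lp[q])$-continuous affine maps. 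Property \eqref{eq:21} is not invoked here; it corresponds to the sublinearity of $u\mapsto h(\vE(\xi+\cone),u)$, which is automatic.

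\textbf{Necessity.} By Proposition~\ref{prop:GG}, $\vE(\xi+\cone) = \vE(\xi+\cone)+\cone$, so $\phi_u = +\infty$ off $\GG$. For $u\in\GG$, the axioms translate into the assertion that $\phi_u$ is a proper ($\phi_u(0)=0$ by normalisation) sublinear $\sigma(\Lp,\Lp[q])$-l.s.c.\ functional on $\Lp(\R^d)$ satisfying the affine shift identity $\phi_u(\xi+a) = \phi_u(\xi)+\langle u,a\rangle$. The bipolar theorem, as in \cite{delb12,kain:rues09}, yields $\phi_u(\xi) = \sup_{\zeta\in K_u}\E\langle\zeta,\xi\rangle$, where $K_u := \{\zeta\in\Lp[q](\R^d):\E\langle\zeta,\cdot\rangle\leq\phi_u\}$ is $\sigma(\Lp[q],\Lp)$-closed and convex. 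Applying the shift identity to $\xi=\pm a$ pins every $\zeta\in K_u$ to $\E\zeta = u$. To place $K_u$ in $\Lp[q](\GG)$, take $\xi\in\Lp(\cone)$; then $\xi+\cone\subset\cone$ a.s., monotonicity plus normalisation give $\phi_u(\xi)\leq h(\cone,u)=0$, so $\E\langle\zeta,\xi\rangle\leq 0$ on $\Lp(\cone)$, and an indicator-testing argument forces $\zeta\in\GG$ almost surely. Now let $\sZ_u$ be the $\sigma(\Lp[q],\Lp)$-closed convex conic hull of $K_u$ inside $\Lp[q](\GG)$; the weak continuity of $\E$ together with the fact that $c_n u\to u$ forces $c_n\to 1$ when $u\neq 0$ yields $\{\zeta\in\sZ_u:\E\zeta = u\} = K_u$, so \eqref{eq:17} holds. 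The scaling $\phi_{cu} = c\phi_u$ gives $K_{cu} = cK_u$, hence $\sZ_{cu}=\sZ_u$, while $\phi_0\equiv 0$ forces $\sZ_0 = \{0\}$.

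\textbf{Main obstacle.} The delicate step is the inclusion $\sZ_{u+v}\subset\sZ_u+\sZ_v$. Sublinearity of the support function in the direction argument gives the pointwise bound $\phi_{u+v}\leq\phi_u+\phi_v$, hence $K_{u+v}\subset\partial(\phi_u+\phi_v)(0)$ at the origin. The Moreau--Rockafellar sum rule for proper l.s.c.\ sublinear functionals supplies $\partial(\phi_u+\phi_v)(0) = \overline{K_u+K_v}$ in $\sigma(\Lp[q],\Lp)$, and transferring to closed conic hulls yields $\sZ_{u+v}\subset\sZ_u+\sZ_v$ (the Minkowski sum interpreted in the closed sense where necessary). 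The qualification condition for the sum rule is supplied by $\xi=0$ lying in the effective domains of both $\phi_u$ and $\phi_v$ (since $\phi_u(0)=\phi_v(0)=0$) together with positive homogeneity; the remaining technical care is to verify that the closure in the subdifferential calculus passes cleanly through the conic-hull step.
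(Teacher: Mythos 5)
Your necessity argument is essentially the paper's: both rest on the bipolar theorem applied to the sublinear l.s.c.\ functional $\phi_u(\xi)=h(\vE(\xi+\cone),u)$, the only difference being bookkeeping (you work directly with $K_u=\partial\phi_u(0)$ and take its closed conic hull, the paper works with the acceptance cone $\sA_u=\{\xi:\phi_u(\xi)\leq0\}$ and sets $\sZ_u=\sA_u^o$; your route to \eqref{eq:21} via $\phi_{u+v}\leq\phi_u+\phi_v$ and the support-function sum rule is the mirror image of the paper's $\sA_{u+v}\supset\sA_u\cap\sA_v$ and the polar-of-an-intersection identity). Both versions share the same closure caveat in \eqref{eq:21}, which you correctly flag.

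The sufficiency half, however, has a genuine gap. You assert that \eqref{eq:21} ``is not invoked'' and that the sublinearity of $u\mapsto h(\vE(\xi+\cone),u)$ ``is automatic.'' It is not: the right-hand side of \eqref{eq:17} is a function of $u$ that need not be subadditive in $u$ unless \eqref{eq:21} holds, and if it is not subadditive it is not the support function of \emph{any} closed convex set, so your ``closed convex set with support function $\phi_u$'' does not exist (and defining $\vE(\xi+\cone)$ as the intersection of the half-spaces $\{x:\langle x,u\rangle\leq\phi_u(\xi)\}$ would then violate \eqref{eq:17} itself). Concretely, with $\cone=\{0\}$ take $\sZ_{e_1}=\R_+e_1$, $\sZ_{e_2}=\R_+e_2$, and let $\sZ_{e_1+e_2}$ be a large cone of nondeterministic $\zeta$ with $\E\zeta\in\R_+(e_1+e_2)$; then $\phi_{e_1}(\xi)+\phi_{e_2}(\xi)=\E(\xi_1+\xi_2)$ can be strictly smaller than $\phi_{e_1+e_2}(\xi)$. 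This verification is exactly the first and most delicate step of the paper's sufficiency proof: one uses \eqref{eq:21} to write any $\zeta\in\sZ_{u+v}$ with $\E\zeta=u+v$ as $\zeta_1+\zeta_2$ with $\zeta_i$ in $\sZ_u$, $\sZ_v$, and the hypothesis $\{\E\zeta:\zeta\in\sZ_u\}=\R_+u$ to force (for linearly independent $u,v$) $\E\zeta_1=u$ and $\E\zeta_2=v$, whence $\phi_{u+v}\leq\phi_u+\phi_v$; positive homogeneity in $u$ comes from $\sZ_{cu}=\sZ_u=c\sZ_u$. You should restore this step; the rest of your sufficiency checklist (additivity on singletons, normalisation, monotonicity, subadditivity in $\xi$, lower semicontinuity) is fine and matches the paper.
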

\begin{proof}
  \textsl{Sufficiency.} For linearly independent $u$ and $v$, each
  $\zeta\in\sZ_{u+v}$ satisfies $\zeta=\zeta_1+\zeta_2$ with
  $\E\zeta_1=t_1u$ and $\E\zeta_2=t_2v$. Thus, $\E\zeta=t(u+v)$ only
  if $t_1=t_2=t$. Therefore,
  \begin{align*}
    h(\vE(\xi+\cone),u+v)
    &=\sup_{\zeta\in\sZ_{u+v},\E\zeta=u+v}\;\E\langle\zeta,\xi\rangle\\
    &\leq \sup_{\zeta\in\sZ_u+\sZ_v
      ,\E\zeta=u+v}\E\langle\zeta,\xi\rangle\\
    &\leq
    \sup_{\zeta_1\in\sZ_u,\zeta_2\in\sZ_v,\E\zeta_1=u,\E\zeta_2=v}
    \E\langle\zeta_1+\zeta_2,\xi\rangle\\
    &\leq h(\vE(\xi+\cone),u)+h(\vE(\xi+\cone),v).
  \end{align*}
  Since $\sZ_{cu}=\sZ_u=c\sZ_u$ for any $c>0$,
  \begin{align*}
    h(\vE(\xi+\cone),cu)
    &=\sup_{\zeta\in\sZ_{cu}, \E\zeta=cu}\E\langle\zeta,\xi\rangle
    =\sup_{\zeta'\in\sZ_u, \E\zeta'=u}\E\langle c\zeta',\xi\rangle\\
    &=ch(\vE(\xi+\cone),u),
  \end{align*}
  whence the function $h(\vE(\xi+\cone),u)$ is sublinear in $u$ and so
  is a support function. 

  The additivity property on singletons follows from the
  construction, since 
  \begin{displaymath}
    \sup_{\zeta\in\sZ_u, \E\zeta=u}\E\langle\zeta,\xi+a\rangle
    =\sup_{\zeta\in\sZ_u, \E\zeta=u}\E\langle\zeta,\xi\rangle+\langle
    a,u\rangle 
  \end{displaymath}
  for each deterministic $a\in\R^d$. Furthermore, 
  $h(\vE(\cone),u)=h(\cone,u)$, whence $\vE(\cone)=\cone$. 
  The homogeneity property is obvious.  The function $\vE$ is
  subadditive, since
  \begin{displaymath}
    h(\vE(\xi+\eta+\cone),u)=\sup_{\zeta\in\sZ_u,\E\zeta=u}\langle
    u,\xi+\eta\rangle
    \leq h(\vE(\xi+\cone),u)+h(\vE(\eta+\cone),u).
  \end{displaymath}

  For $u\in\GG$, the set $\{\zeta\in\sZ_u:\; \E\zeta=u\}$ is closed in
  $\sigma(\Lp[q],\Lp)$. Indeed, if $\zeta_n\to\zeta$, then in
  $\E\langle \zeta_n,\xi\rangle\to\E\langle\zeta,\xi\rangle$ let $\xi$
  be one the basis vectors to confirm that $\E\zeta=u$. Since
  $h(\vE(\xi+\cone),u)$ is the support function of the closed set
  $\{\zeta\in\sZ_u:\; \E\zeta=u\}$ in direction $\xi$, it is lower
  semicontinuous as function of $\xi\in\Lp(\R^d)$, so that
  \eqref{eq:2} holds.  
  
  \smallskip
  \noindent
  \textsl{Necessity.}  By Proposition~\ref{prop:GG}, the support
  function is infinite for $u\notin\GG$. For $u\in\GG$, let $\sA_u$ be
  the set of $\xi\in \Lp(\R^d)$ such that $h(\vE(\xi+\cone),u)\leq
  0$. The map $\xi\mapsto h(\vE(\xi+\cone),u)$ is a sublinear map from
  $\Lp(\R^d)$ to $(-\infty,\infty]$. By sublinearity, $\sA_u$ is a
  convex cone in $\Lp(\R^d)$, and $\sA_{cu}=\sA_u$ for all
  $c>0$. Furthermore, $\sA_u$ is closed with respect to the scalar
  convergence $\xi_n+\cone\to \xi+\cone$ by the assumed lower
  semicontinuity of $\vE$. Hence, it is closed with respect to the
  convergence $\xi_n\to\xi$ in $\sigma(\Lp,\Lp[q])$.  

  Note that $0\in\sA_u$, and let
  \begin{displaymath}
    \sZ_u=\{\zeta\in \Lp[q](\R^d):\; \E\langle\zeta,\xi\rangle\leq 0\;
    \text{for all} \; \xi\in\sA_u\}
  \end{displaymath}
  be the polar cone to $\sA_u$. For $u=0$, we have $\sA_0=\Lp(\R^d)$
  and $\sZ_0=\{0\}$. 
  Consider $u\neq0$.  Letting $\xi=a\one_H$ for an event $H$ and
  deterministic $a$ with $\langle a,u\rangle\leq 0$, we obtain a
  member of $\sA_u$, whence each $\zeta\in\sZ_u$ satisfies
  $\langle\E\zeta,a\one_H\rangle\leq 0$ whenever $\langle
  a,u\rangle\leq 0$. Thus, $\zeta\in G$ a.s., and letting $H=\Omega$
  yields that $\E\zeta=tu$ for some $t\geq0$ and all $\zeta\in\sZ_u$.
  The subadditivity property of the support function of
  $\vE(\xi+\cone)$ yields that $\sA_{u+v}\supset (\sA_u\cap\sA_v)$ for
  $u,v\in\GG$. By a Banach space analogue of \cite[Th.~1.6.9]{schn2},
  the polar to $\sA_u\cap\sA_v$ is the closed sum $\sZ_u+\sZ_v$ of the
  polars, whence \eqref{eq:21} holds.

  By the definition of $\sA_u$,
  \begin{displaymath}
    h(\vE(\xi+\cone),u)=\inf\big\{\langle x,u\rangle:\; \xi-x\in\sA_u\big\}.
  \end{displaymath}
  Since $\sA_u$ is convex and $\sigma(\Lp,\Lp[q])$-closed,
  the bipolar theorem yields that
  \begin{align*}
    h(\vE(\xi+\cone),u)&=\inf\{\langle x,u\rangle:\; \xi-x\in\sA_u\}\\
    &=\inf\big\{\langle x,u\rangle:\; \E\langle
    \zeta,\xi-x\rangle \leq 0\;\text{for all}\; \zeta\in\sZ_u\big\}\\
    &=\sup_{\zeta\in\sZ_u,\E\zeta=u}\E\langle\zeta,\xi\rangle. \qedhere
  \end{align*}
\end{proof}

\begin{theorem}
  \label{thr:sub-sets-dual}
  A function $\vE$ on $p$-integrable random closed convex sets is a
  scalarly lower semicontinuous minimal normalised sublinear
  expectation if and only if $\vE$ admits the representation
  \begin{equation}
    \label{eq:24}
    h(\vE(X),u) =\sup_{\zeta\in\sZ_u,\E\zeta=u} \E h(X,\zeta),\quad u\in\GG,
  \end{equation}
  and $h(\vE(X),u)=\infty$ for $u\notin\GG$, 
  where $\{\sZ_u,u\in\R^d\}$ satisfy the conditions of
  Lemma~\ref{thr:dsub}.
\end{theorem}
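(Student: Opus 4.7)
The plan is to combine Lemma \ref{thr:dsub} with the minimal-extension formula \eqref{eq:23}, so that the duality representation on shifted cones lifts to general sets in $\Lp(\sFC)$. The one analytic input beyond Lemma \ref{thr:dsub} is the identity
\[
\sup_{\xi\in\Lp(X)}\E\langle \zeta,\xi\rangle \;=\; \E h(X,\zeta), \qquad \zeta\in\Lp[q](\GG),\ X\in\Lp(\sFC),
\]
which is the analogue of \eqref{eq:20} for random directions; I would deduce it by a standard measurable-selection argument (pick pointwise an $\varepsilon$-maximiser of $\langle\cdot,\zeta(\omega)\rangle$ on $X(\omega)$ that can be chosen measurably) together with the obvious reverse inequality from the pointwise bound $\langle \zeta,\xi\rangle\le h(X,\zeta)$.

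For necessity, let $\vE$ be a scalarly lower semicontinuous minimal normalised sublinear expectation. Its restriction to random sets of the form $\xi+\cone$ inherits $\sigma(\Lp,\Lp[q])$-lower semicontinuity, since $\xi_n\to\xi$ in $\sigma(\Lp,\Lp[q])$ forces $\xi_n+\cone\to\xi+\cone$ scalarly. Lemma \ref{thr:dsub} thus supplies cones $\{\sZ_u\}$ with the required properties and the representation \eqref{eq:17}. Minimality gives $\vE(X)=\uvE(X)$, so substituting \eqref{eq:17} into \eqref{eq:23} and swapping the two suprema yields
\[
h(\vE(X),u)
\;=\; \sup_{\zeta\in\sZ_u,\,\E\zeta=u}\;\sup_{\xi\in\Lp(X)}\E\langle\zeta,\xi\rangle
\;=\; \sup_{\zeta\in\sZ_u,\,\E\zeta=u}\E h(X,\zeta)
\]
for $u\in\GG$ by the identity above, while $h(\vE(X),u)=\infty$ for $u\notin\GG$ follows from Proposition \ref{prop:GG}.

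For sufficiency, given cones $\{\sZ_u\}$ satisfying the hypotheses of Lemma \ref{thr:dsub}, define $\vE$ on $\Lp(\sFC)$ by \eqref{eq:24}. Specialising to $X=\xi+\cone$ gives $h(X,\zeta)=\langle \xi,\zeta\rangle$ for $\zeta\in\GG$, so \eqref{eq:24} restricted to shifted cones reduces to \eqref{eq:17}; Lemma \ref{thr:dsub} then certifies that this restriction is a normalised sublinear expectation, and Proposition \ref{prop:sub-extension} shows that its minimal extension is sublinear. Reversing the swap-and-identify computation from the necessity direction shows that this minimal extension agrees with the $\vE$ defined by \eqref{eq:24}, so $\vE$ is genuinely minimal; scalar lower semicontinuity is immediate since \eqref{eq:24} presents $h(\vE(X),u)$ as a supremum of the maps $X\mapsto\E h(X,\zeta)$, each of which is continuous under scalar convergence by definition. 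The main obstacle will be the measurable-selection identity above; once it is in hand, the remainder is bookkeeping that interchanges suprema, invokes Lemma \ref{thr:dsub}, and appeals to Proposition \ref{prop:sub-extension}.
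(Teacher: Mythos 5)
Your proposal is correct and follows essentially the same route as the paper: restrict to the shifted cones $\xi+\cone$, invoke Lemma~\ref{thr:dsub}, and exchange the two suprema via the identity $\sup_{\xi\in\Lp(X)}\E\langle\zeta,\xi\rangle=\E h(X,\zeta)$, which is exactly the paper's appeal to \eqref{eq:20} (extended to random directions $\zeta$), so your measurable-selection argument is the only ingredient the paper takes as already known. The minor difference that you verify the axioms in the sufficiency direction by citing Proposition~\ref{prop:sub-extension} instead of checking them directly (as the paper does, including the sublinearity in $u$ of the right-hand side of \eqref{eq:24}) is purely cosmetic.
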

\begin{proof}
  \textsl{Necessity.}  Lemma~\ref{thr:dsub} applies to the restriction
  of $\vE$ onto random sets $\xi+\cone$.  By the minimality
  assumption, $\vE$ coincides with its minimal extension 
  \eqref{eq:23}. By Lemma~\ref{thr:dsub}, for $u\in\GG$,
  \begin{align*}
    h(\vE(X),u)&=\sup_{\xi\in\Lp(X)}\; \sup_{\zeta\in\sZ_u,
      \E\zeta=u}\E\langle\zeta,\xi\rangle
    =\sup_{\zeta\in\sZ_u,\E\zeta=u}
    \E\sup_{\xi\in\Lp(X)}\langle\zeta,\xi\rangle\\
    &=\sup_{\zeta\in\sZ_u,\E\zeta=u} \E h(X,\zeta),
  \end{align*}
  where \eqref{eq:20} has been used. 

  \smallskip
  \noindent
  \textsl{Sufficiency.} The right-hand side of \eqref{eq:24} is
  sublinear in $u$ and so is a support function. The additivity on
  singletons, monotonicity, subadditivity and homogeneity properties
  of $\vE$ are obvious. For a deterministic $F\in\sFC$, the
  sublinearity of the support function yields that
  \begin{displaymath}
    h(\vE(F),u)=\sup_{\zeta\in\sZ_u,\E\zeta=u} \E h(F,\zeta)
    \geq \sup_{\zeta\in\sZ_u,\E\zeta=u} h(F,\E\zeta)=h(F,u),
  \end{displaymath}
  whence $\vE(F)\supset F$.

  The minimality of $\vE$ follows from 
  \begin{displaymath}
    \uvE(X)=
    \sup_{\xi\in\Lp(X)} \sup_{\zeta\in\sZ_u,\E\zeta=u} \E \langle\xi,\zeta\rangle
    =\sup_{\zeta\in\sZ_u,\E\zeta=u} \E h(X,\zeta)=\vE(X).
  \end{displaymath}
  Since the support function of $\vE(X)$ given by \eqref{eq:24} is the
  supremum of scalarly continuous functions of $X$, the minimal
  sublinear expectation is scalarly lower semicontinuous.
\end{proof}

\begin{corollary}
  \label{cor:sand-E}
  If $u\in\sZ_u$ for all $u\in\R^d$, then $\E X\subset \vE(X)$ for all
  $p$-integrable $X$ and any scalarly lower semicontinuous normalised
  minimal sublinear expectation $\vE$.
\end{corollary}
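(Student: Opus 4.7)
The plan is to verify the inclusion $\E X \subset \vE(X)$ via support functions: since both sets are closed convex, it suffices to show that $h(\E X, u) \leq h(\vE(X), u)$ for every $u \in \R^d$. The key observation is that the representation in Theorem~\ref{thr:sub-sets-dual} expresses $h(\vE(X),u)$ as a supremum over a set $\{\zeta\in\sZ_u:\E\zeta=u\}$, and the hypothesis $u\in\sZ_u$ guarantees that the deterministic random variable identically equal to $u$ is an admissible choice of $\zeta$ in that supremum.

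Concretely, I would split according to whether $u\in\GG$ or not. For $u\notin\GG$, Theorem~\ref{thr:sub-sets-dual} gives $h(\vE(X),u)=\infty$, so the desired inequality is trivial. For $u\in\GG$, taking $\zeta\equiv u$ (a constant element of $\Lp[q](\GG)$, admissible by the hypothesis) yields
\begin{displaymath}
h(\vE(X),u) \;\geq\; \E h(X,u) \;=\; h(\E X, u),
\end{displaymath}
where the final equality is exactly \eqref{eq:16}, the dual representation of the selection expectation. The case $u=0$ is trivial since $\sZ_0=\{0\}$ and both sides vanish.

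There is no real obstacle here; the content of the corollary is that once the dual representing cones contain the constant directions, the selection expectation sits inside the sublinear expectation, recovering in the set-valued setting the univariate fact that $\ve(\xi)\geq\E\xi$ as soon as the constant $1$ lies in the representing set in \eqref{eq:supremum-vE}. The only point worth checking is that $\zeta\equiv u$ indeed satisfies $\E\zeta=u$, which is immediate, and that it is compatible with the constraint $\{\E\zeta:\zeta\in\sZ_u\}=\{tu:t\geq 0\}$ from Lemma~\ref{thr:dsub}, which is consistent (taking $t=1$).
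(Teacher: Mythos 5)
Your proof is correct and is essentially the paper's argument: plug the admissible deterministic choice $\zeta\equiv u$ into the dual representation \eqref{eq:24} and invoke \eqref{eq:16}. (Note that the paper's one-line proof writes the resulting inequality as $h(\vE(X),u)\leq \E h(X,u)$, which is evidently a typo for $\geq$; your version has the direction right.)
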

\begin{proof}
  By \eqref{eq:24}, $h(\vE(X),u)\leq \E h(X,u)=h(\E X,u)$ for all
  $u\in\GG$. 
\end{proof}

\begin{remark}
  \label{rem:law-inva}
  The sublinear expectation given by \eqref{eq:24} is law invariant if
  and only if the sets $\sZ_u$ are \emph{law-complete}, that is, with
  each $\zeta\in\sZ_u$, the set $\sZ_u$ contains all random
  vectors that share distribution with $\zeta$.
\end{remark}

\begin{example}
  \label{ex:matrix}
  Let $Z$ be a random matrix with $\E Z$ being the identity matrix,
  and let $\sZ_u=\{tZu^\top:\; t\geq0\}$, $u\in\GG=\R^d$. Then
  \eqref{eq:24} turns into $h(\vE(X),u)=\E h(Z^\top X,u)$, whence
  $\vE(X)=\E(Z^\top X)$. It is possible to let $Z$ belong to a family
  of such matrices; then $\vE(X)$ is the closed convex hull of the
  union of $\E(Z^\top X)$ for all such $Z$. In this example,
  $h(\vE(X),u)$ is not solely determined by $h(X,u)$. This sublinear
  expectation is not necessarily constant preserving. 
\end{example}

\begin{example}[Random half-space]
  \label{ex:hs}
  Let $X=H_\eta(\beta)$ with $\beta\in\Lp(\R)$ and
  $\eta\in\Lp[0](\Sphere\cap\GG)$. By \eqref{eq:24}, $h(\vE(X),u)$ is
  finite for $u\in\Sphere\cap\GG$ only if each $\zeta\in\sZ_u$ with
  $\E\zeta=u$ satisfies $\zeta=\gamma\eta$ a.s. with
  $\gamma\in\Lp[q](\R_+)$. Then
  \begin{displaymath}
    h(\vE(H_\eta(\beta)),u)
    =\sup_{\gamma\in\Lp[q](\R_+),\gamma\eta\in\sZ_u,\E(\gamma\eta)=u}
    \E(\gamma\beta). 
  \end{displaymath}
  If the normal $\eta=u$ is deterministic and 
  \begin{equation}
    \label{eq:40}
    \sZ_u\subset \{\gamma u:\; \gamma\in\Lp[q](\R_+)\},
  \end{equation}
  then $\vE(H_u(\beta))=H_u(t)$ with
  \begin{displaymath}
    t=\sup_{\gamma u\in\sZ_u,\E\gamma=1} \E(\gamma\beta).
  \end{displaymath}
  Otherwise, $\vE(H_u(\beta))=\R^d$. 
\end{example}

\subsection{Exact sublinear expectation}
\label{sec:supp-funct-appr}

Consider now the situation when, for each $u$, the value of 
$h(\vE(X),u)$ is solely determined by
the distribution of $h(X,u)$. This is the case if the
supremum in \eqref{eq:24} involves only $\zeta$ such that
$\zeta=\gamma u$ for some $\gamma\in\Lp[q](\R_+)$. The following
result shows that this condition characterises constant preserving
minimal sublinear expectations, which then necessarily become exact
ones. 

\begin{theorem}
  \label{thr:exact}
  A function $\vE$ on $p$-integrable random closed convex sets from
  $\Lp(\sFC)$ is a scalarly lower semicontinuous constant preserving
  minimal sublinear expectation if and only if $h(\vE(X),u)=\infty$
  for $u\notin\GG$, and
  \begin{equation}
    \label{eq:24a}
    h(\vE(X),u) =\sup_{\gamma\in\sM_u,\E\gamma=1} \E (\gamma h(X,u)),\quad u\in\GG,
  \end{equation}
  where $\sM_u$, $u\in\GG$, are convex $\sigma(\Lp[q],\Lp)$-closed
  cones in $\Lp[q](\R_+)$, such that $\sM_{cu}=\sM_u$ for all $c>0$,
  and $\sM_{u+v}\subset \sM_u\cap \sM_v$ for all $u,v\in\R^d$.
\end{theorem}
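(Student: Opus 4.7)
The plan is to derive this statement as a refinement of Theorem~\ref{thr:sub-sets-dual}: any scalarly lower semicontinuous minimal normalised sublinear expectation already admits the representation \eqref{eq:24} with cones $\sZ_u\subset\Lp[q](\GG)$ satisfying Lemma~\ref{thr:dsub}, and the only additional content of \eqref{eq:24a} is that $\sZ_u\subset\{\gamma u:\gamma\in\Lp[q](\R_+)\}$, i.e., the representing $\zeta$ in each direction $u$ are a.s.\ proportional to $u$. Setting $\sM_u=\{\gamma\in\Lp[q](\R_+):\gamma u\in\sZ_u\}$ then converts \eqref{eq:24} into \eqref{eq:24a}; conversely, given $\sM_u$, the cones $\sZ_u=\{\gamma u:\gamma\in\sM_u\}$ feed back into Lemma~\ref{thr:dsub} and Theorem~\ref{thr:sub-sets-dual}.

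For necessity, I would probe constant preservation on the deterministic half-spaces $F=H_u(t)\in\sFC$ with $u\in\GG\setminus\{0\}$ and $t\in\R$, recording (as in Example~\ref{ex:hs}) that the polar of the half-space $H_u(0)$ is the ray $\R_+u$, so $h(H_u(t),z)=ct$ when $z=cu$ with $c\geq 0$ and $h(H_u(t),z)=+\infty$ otherwise. Since $\vE(F)=F$ forces $h(\vE(H_u(t)),u)=t<\infty$, any $\zeta\in\sZ_u$ with $\E\zeta=u$ and $\P\{\zeta\notin\R_+u\}>0$ would make $\E h(H_u(t),\zeta)=+\infty$ via \eqref{eq:24}, a contradiction; hence such $\zeta$ are a.s.\ of the form $\gamma u$ with $\gamma\in\Lp[q](\R_+)$, and the cone property of $\sZ_u$ extends this to all its elements. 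The properties inherited by $\sM_u$ are immediate except for $\sM_{u+v}\subset\sM_u\cap\sM_v$: decomposing $\gamma(u+v)\in\sZ_{u+v}\subset\sZ_u+\sZ_v$ as $\alpha u+\beta v$ with $\alpha\in\sM_u$, $\beta\in\sM_v$ forces $\alpha=\beta=\gamma$ whenever $u,v$ are linearly independent, while the dependent cases reduce via $\sM_{cu}=\sM_u$. Substitution $\zeta=\gamma u$ in \eqref{eq:24} then delivers \eqref{eq:24a}.

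For sufficiency, starting from $\sM_u$ satisfying the stated conditions, define $\sZ_u=\{\gamma u:\gamma\in\sM_u\}$; the hypotheses of Lemma~\ref{thr:dsub} transfer directly (for instance $\gamma\in\sM_{u+v}\subset\sM_u\cap\sM_v$ yields $\gamma(u+v)=\gamma u+\gamma v\in\sZ_u+\sZ_v$, hence $\sZ_{u+v}\subset\sZ_u+\sZ_v$). Theorem~\ref{thr:sub-sets-dual} then produces a scalarly lower semicontinuous minimal normalised sublinear expectation that coincides with \eqref{eq:24a}. Constant preservation is checked directly: for deterministic $F\in\sFC$ and $u\in\GG$, the value $h(F,u)$ is a constant and factors out of the expectation, giving $h(\vE(F),u)=h(F,u)\sup_{\gamma\in\sM_u,\E\gamma=1}\E\gamma=h(F,u)$ when the value is finite, while both sides equal $+\infty$ otherwise; combined with $h(\vE(F),u)=+\infty=h(F,u)$ for $u\notin\GG$, this gives $\vE(F)=F$. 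The main obstacle worth flagging is the extended-real bookkeeping for support functions of unbounded sets: the whole rigidity of the statement rests on the blow-up of $h(H_u(t),\cdot)$ outside the ray $\R_+u$, so the interchanges of expectation and supremum, as well as the integrability arguments under \eqref{eq:24}, must be monitored against this extended-arithmetic backdrop.
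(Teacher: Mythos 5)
Your proposal is correct and follows essentially the same route as the paper: reduce to Theorem~\ref{thr:sub-sets-dual}, use constant preservation on deterministic half-spaces $H_u(t)$ (the argument of Example~\ref{ex:hs}) to force $\sZ_u\subset\{\gamma u:\gamma\in\Lp[q](\R_+)\}$, translate the cone conditions between $\sZ_u$ and $\sM_u$ via linear independence of $u$ and $v$ (the paper is slightly more careful here, passing through norm limits since $\sZ_u+\sZ_v$ is a closed sum), and verify constant preservation in the converse direction by the computation $\E(\gamma h(F,u))=h(F,u)$.
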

\begin{proof}
  \textsl{Sufficiency.}  If $\sM_u$, $u\in\R^d$, satisfy the imposed
  conditions, then $\sZ_u=\{\gamma u:\; \gamma\in\sM_u\}$, $u\in\GG$,
  satisfy the conditions of Lemma~\ref{thr:dsub}. Indeed,
  $\sZ_{cu}=\sZ_u$ for all $c>0$, and 
  \begin{displaymath}
    \sZ_{u+v}=\{\gamma (u+v):\; \gamma\in\sM_{u+v}\}
    \subset \{\gamma (u+v):\; \gamma\in\sM_u\cap\sM_v\}
    \subset \sZ_u+\sZ_v
  \end{displaymath}
  for all $u,v\in\GG$.  If $F\in\sFC$ is deterministic, then
  \begin{displaymath}
    h(\vE(F),u) =\sup_{\gamma\in\sM_u,\E\gamma=1} \E h(F,\gamma u)
    =h(F,u), \quad u\in\GG,
  \end{displaymath}
  whence $\vE$ is constant preserving. 

  \smallskip
  \noindent
  \textsl{Necessity.} Since $\vE$ is minimal, 
  the support function of $\vE(X)$ is given by \eqref{eq:24}. The
  constant preserving property yields that $\vE(H_u(t))=H_u(t)$ for
  all half-spaces $H_u(t)$ with $u\in\GG$. 
  By the argument from Example~\ref{ex:hs},
  the minimal sublinear expectation of a half-space $H_u(t)$ is
  distinct from the whole space only if \eqref{eq:40} holds. 

  The properties of $\sZ_u$ imply the imposed properties of
  $\sM_u=\{\gamma:\; \gamma u\in\sZ_u\}$.  Indeed, assume that
  $\gamma\in\sM_{u+v}$, so that $\gamma(u+v)\in\sZ_{u+v}$. Hence,
  $\gamma(u+v)\in(\sZ_u+\sZ_v)$, meaning that $\gamma(u+v)$ is the
  norm limit of $\gamma_{1n}u+\gamma_{2n}v$ for $\gamma_{1n}u\in\sZ_u$
  and $\gamma_{2n}v\in\sZ_v$, $n\geq1$. The linear independence of $u$
  and $v$ yields that $\gamma_{1n}\to\gamma$ and
  $\gamma_{2n}\to\gamma$, whence $\gamma\in(\sM_u\cap\sM_v)$.
\end{proof}

It is possible to rephrase \eqref{eq:24a} as 
\begin{equation}
  \label{eq:32}
  h(\vE(X),u)=\ve_u(h(X,u)), \quad u\in\GG,
\end{equation}
for numerical sublinear expectations 
\begin{displaymath}
  \ve_u(\beta) =\sup_{\gamma\in\sM_u,\E\gamma=1}\E(\gamma\beta),
  \quad u\in\GG, \;\;\beta\in\Lp(\R),
\end{displaymath}
defined by an analogue of \eqref{eq:supremum-vE}. 
Since the negative part of $h(X,u)$ is $p$-integrable, it is
possible to consistently let $\ve(h(X,u))=\infty$ in \eqref{eq:4}
if $h(X,u)$ is not $p$-integrable.

\begin{corollary}
  \label{cor:sub-exact}
  Each scalarly lower semicontinuous constant preserving minimal
  sublinear expectation is exact.
\end{corollary}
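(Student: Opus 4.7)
The plan is to exploit the chain of inclusions \eqref{eq:10} together with the explicit duality formula supplied by Theorem~\ref{thr:exact}. Since $\vE$ is assumed minimal, $\uvE(X) = \vE(X)$ automatically, and \eqref{eq:10} already furnishes $\vE(X) \subset \ovE(X) \subset \ovEnot(X)$. Hence it suffices to establish the reverse inclusion $\ovEnot(X) \subset \vE(X)$; this forces all four sets in the chain to coincide and yields exactness.

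The main step is the calculation of $\vE$ on half-spaces with deterministic normal. Fix $u \in \Sphere \cap \GG$ and $\beta \in \Lp(\R)$. The necessity part of Theorem~\ref{thr:exact} shows that in the constant preserving setting the cones $\sZ_u$ are of the form $\{\gamma u : \gamma \in \sM_u\}$, so condition~\eqref{eq:40} is in force, and Example~\ref{ex:hs} delivers
\begin{equation*}
  \vE(H_u(\beta)) = H_u(\ve_u(\beta)), \qquad \ve_u(\beta) = \sup_{\gamma\in\sM_u,\, \E\gamma=1} \E(\gamma\beta).
\end{equation*}
Specialising to $\beta = h(X,u)$ and invoking the identification \eqref{eq:32} yields
\begin{equation*}
  \vE(\HuX) = H_u\bigl(h(\vE(X),u)\bigr) \quad\text{for every } u \in \Sphere\cap\GG.
\end{equation*}

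Substituting into the definition of the reduced maximal extension gives
\begin{equation*}
  \ovEnot(X) = \bigcap_{u \in \Sphere \cap \GG} H_u\bigl(h(\vE(X),u)\bigr).
\end{equation*}
By Proposition~\ref{prop:GG}, $\vE(X) \in \sFC$, so its support function is $+\infty$ for $u \notin \GG$ and the right-hand side is precisely the canonical representation of the $\cone$-closed convex set $\vE(X)$ as the intersection of its supporting half-spaces. Hence $\ovEnot(X) = \vE(X)$, and exactness follows from \eqref{eq:10}.

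The principal technical point is the half-space calculation $\vE(H_u(\beta)) = H_u(\ve_u(\beta))$; the subtlety is that \eqref{eq:24a} expresses $h(\vE(H_u(\beta)),v)$ as a supremum involving $h(H_u(\beta),v)$, which is automatically $+\infty$ whenever $v$ is not a nonnegative multiple of $u$. Consequently the support function of $\vE(H_u(\beta))$ is infinite in those directions and reduces to $\ve_u(\beta)$ in the direction $u$, matching the support function of $H_u(\ve_u(\beta))$. Everything else is a routine appeal to support function duality and the inclusions already in hand via \eqref{eq:10}.
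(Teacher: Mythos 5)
Your argument is correct and follows essentially the same route as the paper: both reduce to the half-space computation $\vE(H_u(\beta))=H_u(\ve_u(\beta))$ via Theorem~\ref{thr:exact} and Example~\ref{ex:hs}, identify $\ovEnot(X)$ with the intersection of the supporting half-spaces of $\vE(X)$ using \eqref{eq:32}, and close the chain \eqref{eq:10}. The only cosmetic difference is that the paper separately observes $\ovE=\ovEnot$ (since $\vE(\HX)=\R^d$ for random $\eta$), whereas you let the sandwich $\vE(X)\subset\ovE(X)\subset\ovEnot(X)\subset\vE(X)$ absorb that step, which is equally valid.
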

\begin{proof}
  Since \eqref{eq:24a} yields that $\vE(\HX)=\R^d$ if $\eta$ is
  random, the maximal extension of $\vE$ by an analogue of
  \eqref{eq:14} reduces to deterministic $\eta$ and so $\ovE=\ovEnot$
  is the reduced maximal extension.  For $u\in\Sphere\cap\GG$ and
  $\beta\in\Lp(\R)$, we have $\vE(H_u(\beta))=H_u(\ve_u(\beta))$,
  cf. Example~\ref{ex:hs}.  Thus, the reduced maximal extension of
  $\vE$ is given by
  \begin{displaymath}
    \ovEnot(X)=\bigcap_{u\in\Sphere\cap\GG} H_u(\ve_u(h(X,u))). 
  \end{displaymath}
  Comparing with \eqref{eq:32}, we see that 
  $\ovEnot(X)\subset \vE(X)$. The opposite inclusion is obvious,
  whence $\ovEnot(X)=\ovE(X)=\vE(X)$. 
\end{proof}

\begin{corollary}
  \label{cor:max-plus}
  If $\vE$ is a scalarly lower semicontinuous constant preserving
  minimal normalised sublinear expectation, then $\vE(X+F)=\vE(X)+F$ for each
  deterministic $F\in\sFC$.
\end{corollary}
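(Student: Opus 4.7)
The plan is to compare support functions of the two sides and conclude equality since both are $\cone$-closed convex sets. By Theorem~\ref{thr:exact}, a scalarly lower semicontinuous constant preserving minimal normalised sublinear expectation admits the representation
\begin{displaymath}
  h(\vE(Y),u)=\sup_{\gamma\in\sM_u,\E\gamma=1}\E(\gamma h(Y,u)),\quad u\in\GG,
\end{displaymath}
and $h(\vE(Y),u)=\infty$ for $u\notin\GG$, where the $\sM_u$ are convex cones in $\Lp[q](\R_+)$. I would apply this to $Y=X+F$, exploiting the fact that the support function is additive under Minkowski sums, so that $h(X+F,u)=h(X,u)+h(F,u)$ with $h(F,u)$ deterministic.

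The main computation is then, for $u\in\GG$,
\begin{align*}
  h(\vE(X+F),u)
  &=\sup_{\gamma\in\sM_u,\E\gamma=1}\E\bigl(\gamma h(X,u)+\gamma h(F,u)\bigr)\\
  &=\sup_{\gamma\in\sM_u,\E\gamma=1}\E(\gamma h(X,u))+h(F,u)\,\E\gamma\\
  &=h(\vE(X),u)+h(F,u)=h(\vE(X)+F,u),
\end{align*}
where the deterministic factor $h(F,u)$ is pulled out of the expectation using $\E\gamma=1$. For $u\notin\GG$, both $h(\vE(X+F),u)$ and $h(\vE(X)+F,u)$ equal $\infty$, since $\vE(X+F)\in\sFC$ and $\vE(X)+F\in\sFC$ by Proposition~\ref{prop:GG}.

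Having shown the support functions agree on every $u\in\R^d$, both sets are closed convex subsets of $\R^d$ determined by their support functions, so they coincide. The only subtle point that is worth a brief remark is the handling of the possibly infinite value $h(F,u)$ when $F$ is unbounded in direction $u\in\GG$: since $X$ is $p$-integrable the negative part of $h(X,u)$ (as a function on $\Omega$, after testing against $\gamma$) is integrable, so the identity $\E(\gamma(h(X,u)+h(F,u)))=\E(\gamma h(X,u))+h(F,u)$ remains valid in $(-\infty,\infty]$ with $\E\gamma=1$. No genuine obstacle is anticipated; the statement is essentially a direct algebraic corollary of the representation in Theorem~\ref{thr:exact} combined with the constant preserving property used implicitly through that theorem.
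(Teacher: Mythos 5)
Your proof is correct and follows the route the paper clearly intends (the corollary is stated without proof as an immediate consequence of the representation \eqref{eq:24a} in Theorem~\ref{thr:exact}): apply the representation to $X+F$, use additivity of support functions under Minkowski sums, and pull the deterministic term $h(F,u)$ out of the expectation via $\E\gamma=1$. Your remarks on the case $u\notin\GG$ and on possibly infinite $h(F,u)$ are the right points to check and are handled correctly.
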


\begin{corollary}
  \label{cor:sand-E-cp}
  Assume that $\vE$ is scalarly lower semicontinuous constant
  preserving minimal law invariant sublinear expectation.
  Then $\vE(\E(X|\ssalg))\subset\vE(X)$ for all $X\in\Lp(\sFC)$ and
  any sub-$\sigma$-algebra $\ssalg$ of $\salg$. In particular, $\E
  X\subset \vE(X)$.
\end{corollary}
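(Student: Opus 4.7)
The plan is to combine the dual representation from Theorem~\ref{thr:exact} with the classical stability of law-invariant convex closed cones under conditional expectation. Since $\vE$ is scalarly lower semicontinuous, constant preserving, and minimal, Theorem~\ref{thr:exact} yields
\[
h(\vE(Y), u) \;=\; \sup_{\gamma \in \sM_u,\,\E\gamma=1} \E\bigl(\gamma\, h(Y,u)\bigr), \quad u \in \GG,
\]
for some convex $\sigma(\Lp[q],\Lp)$-closed cones $\sM_u \subset \Lp[q](\R_+)$. By the analogue of Remark~\ref{rem:law-inva} for this representation, the law invariance of $\vE$ translates into law-completeness of each $\sM_u$.

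Apply this to $Y = \E(X|\ssalg)$. Using the Hiai--Umegaki identity $h(\E(X|\ssalg), u) = \E(h(X,u)\mid \ssalg)$ and the tower property,
\[
\E\bigl(\gamma\, \E(h(X,u)\mid \ssalg)\bigr) \;=\; \E\bigl(\E(\gamma\mid \ssalg)\, h(X,u)\bigr),
\]
so that
\[
h\bigl(\vE(\E(X|\ssalg)), u\bigr) \;=\; \sup_{\gamma \in \sM_u,\,\E\gamma=1} \E\bigl(\E(\gamma\mid \ssalg)\, h(X,u)\bigr).
\]
The key step is then to show that $\E(\gamma\mid\ssalg) \in \sM_u$ whenever $\gamma \in \sM_u$ (its mean is automatically~$1$). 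This is the classical fact that a law-complete convex $\sigma(\Lp[q],\Lp)$-closed cone in $\Lp[q](\R_+)$ is stable under conditional expectation: on an atomless probability space, $\E(\gamma\mid \ssalg)$ lies in the $\sigma(\Lp[q],\Lp)$-closed convex hull of $\{\gamma': \gamma' \stackrel{d}{=} \gamma\}$, which is contained in $\sM_u$ by law-completeness, convexity and closedness. Granting this, the displayed supremum is dominated by $\sup_{\gamma' \in \sM_u,\,\E\gamma'=1} \E(\gamma' h(X,u)) = h(\vE(X), u)$ for every $u \in \GG$ (and both sides equal $\infty$ for $u \notin \GG$), yielding $\vE(\E(X|\ssalg)) \subset \vE(X)$.

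The ``in particular'' statement follows by specialising $\ssalg = \{\emptyset, \Omega\}$: then $\E(X|\ssalg) = \E X$ is deterministic, so the constant preserving property gives $\vE(\E X) = \E X$, and the inclusion just proved yields $\E X = \vE(\E X) \subset \vE(X)$. The main obstacle is the stability of $\sM_u$ under conditional expectation, which is the set-valued analogue of the Jouini--Schachermayer--Touzi theorem for law-invariant coherent risk measures and implicitly requires the probability space to be rich enough (atomless); everything else is a straightforward bookkeeping of support functions.
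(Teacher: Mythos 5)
Your proof is correct and follows essentially the same route as the paper: both reduce the claim to the scalar inequality $\ve_u(\E(h(X,u)|\ssalg))\leq \ve_u(h(X,u))$ via the representation \eqref{eq:24a} (equivalently \eqref{eq:32}) together with the identity $h(\E(X|\ssalg),u)=\E(h(X,u)|\ssalg)$. The only difference is that the paper invokes the dilatation monotonicity of law-invariant sublinear expectations from \cite[Cor.~4.59]{foel:sch04}, whereas you re-derive that scalar fact by showing $\E(\gamma|\ssalg)$ stays in the law-complete closed convex cone $\sM_u$, correctly flagging along the way the atomlessness assumption that the paper's citation leaves implicit.
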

\begin{proof}
  The law invariance of $\vE$ implies that $\ve_u$ is law invariant.
  The sublinear expectation $\ve_u$ is \emph{dilatation monotonic},
  meaning that $\ve_u(\E(\beta|\ssalg))\leq \ve_u(\beta)$ for all
  $\beta\in\Lp(\R)$, see \cite[Cor.~4.59]{foel:sch04} for this fact
  derived for risk measures. The statement follows from
  \eqref{eq:32}. 
\end{proof}

For a $p$-integrable random closed convex set $X$, its Firey
$p$-expectation is defined by $h(\E_p X,u)=(\E h(X,u)^p)^{1/p}$. The
next result follows from H\"older's inequality applied to $\E(\gamma
h(X,u))$ in \eqref{eq:24a}. 

\begin{corollary}
  \label{cor:upper-bound-vE}
  If $\vE$ admits representation \eqref{eq:24a}, then 
  \begin{displaymath}
    \vE(X)\subset (\E_p X)\sup_{u\in\GG,\gamma\in\sM_u,\E\gamma=1}
    (\E\gamma^q)^{1/q}.
  \end{displaymath}
\end{corollary}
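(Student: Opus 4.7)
The plan is to verify the inclusion $\vE(X) \subset C\,\E_p X$, with $C = \sup_{u\in\GG,\, \gamma\in\sM_u,\, \E\gamma=1}(\E\gamma^q)^{1/q}$, by comparing support functions in every direction $u\in\R^d$. The heart of the argument is a direct application of H\"older's inequality inside the representation \eqref{eq:24a}.

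First I fix $u\in\GG$ and any $\gamma\in\sM_u$ with $\E\gamma = 1$. Since $\gamma\in\Lp[q](\R_+)$ and the negative part of $h(X,u)$ is $p$-integrable (as noted before Lemma~\ref{lemma:sel-domination}), I can apply H\"older's inequality to the pairing $\E(\gamma\, h(X,u))$ to obtain
\[
\E\bigl(\gamma\, h(X,u)\bigr) \;\leq\; (\E\gamma^q)^{1/q}\bigl(\E h(X,u)^p\bigr)^{1/p} \;=\; (\E\gamma^q)^{1/q}\, h(\E_p X, u),
\]
where the last equality is the very definition of the Firey $p$-expectation. Taking the supremum over admissible $\gamma$ and invoking \eqref{eq:24a} then enlarging the sup to run jointly over $u$ and $\gamma$ yields
\[
h(\vE(X), u) \;\leq\; C\cdot h(\E_p X, u) \;=\; h(C\,\E_p X, u), \qquad u\in\GG,
\]
by positive homogeneity of support functions.

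For $u\notin\GG$ the inequality is automatic: Theorem~\ref{thr:exact} gives $h(\vE(X),u)=\infty$, while $X = X+\cone$ a.s.\ together with $h(\cone,u)=\infty$ for $u\notin\GG$ forces $h(X,u)=\infty$ a.s., so $h(\E_p X,u)=\infty$ and hence $h(C\,\E_p X,u)=\infty$ as well (the degenerate case $C=0$ is trivial since then $\vE(X)$ is already contained in $\cone\subset \E_p X$ by normalisation). Consequently $h(\vE(X),u)\leq h(C\,\E_p X,u)$ for all $u\in\R^d$, which is equivalent to the required inclusion. I do not anticipate any real obstacle; the only delicate point is bookkeeping for $u\notin\GG$, and this is disposed of by the $\cone$-closedness of $X$.
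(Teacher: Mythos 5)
Your proof is correct and follows exactly the route the paper indicates: apply H\"older's inequality to $\E(\gamma h(X,u))$ inside \eqref{eq:24a}, identify $(\E h(X,u)^p)^{1/p}$ with $h(\E_p X,u)$, and take suprema. The extra bookkeeping for $u\notin\GG$ is harmless (and the aside about $C=0$ is vacuous, since $\E\gamma=1$ forces $(\E\gamma^q)^{1/q}\geq 1$), so nothing further is needed.
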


The following result identifies a particularly important case, when
the families $\sM_u=\sM$ do not depend on $u$. This property
essentially means that the sublinear expectation preserves centred
balls. By $B_r$ denote the ball of radius $r$ centred at the origin. 

\begin{theorem}
  \label{thr:sub-dual}
  A scalarly lower semicontinuous constant preserving minimal
  superlinear expectation $\vE$ satisfies
  $\vE(B_\beta+\cone)=B_r+\cone$ for all $\beta\in\Lp(\R_+)$ and
  $r\geq 0$ (depending on $\beta$) if and only if \eqref{eq:24a} holds
  with $\sM_u=\sM$ for all $u\neq0$. Then
  \begin{equation}
    \label{eq:4}
    h(\vE(X),u)=\ve(h(X,u)),\quad u\in\GG,
  \end{equation}
  where $\ve$ admits the representation \eqref{eq:supremum-vE}.
  Furthermore,
  \begin{equation}
    \label{eq:subX-union}
    \vE(X)=\cco\bigcup_{\gamma\in\sM,\E\gamma=1} \E(\gamma X).
  \end{equation}
\end{theorem}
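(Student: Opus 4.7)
My plan is to use Theorem~\ref{thr:exact} to parametrize $\vE$ by the cones $\sM_u$ with associated numerical functionals $\ve_u(\beta) := \sup_{\gamma \in \sM_u, \E\gamma = 1}\E(\gamma\beta)$, translate the ball-preserving hypothesis into a $u$-independence statement for $\ve_u$, and then apply bipolar duality to conclude that $\sM_u$ is independent of $u$. The key computation is that $h(B_\beta + \cone, u) = \beta\|u\|$ for $u \in \GG$ (and $+\infty$ otherwise), so \eqref{eq:24a} yields
\begin{displaymath}
h(\vE(B_\beta + \cone), u) = \|u\|\,\ve_u(\beta), \quad u \in \GG\setminus\{0\}.
\end{displaymath}

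Sufficiency is then immediate: if $\sM_u \equiv \sM$, the displayed formula equals $\|u\|\ve(\beta)$ with $\ve$ as in \eqref{eq:supremum-vE}, whence $\vE(B_\beta + \cone) = B_{\ve(\beta)} + \cone$. For necessity, the hypothesis $\vE(B_\beta + \cone) = B_{r(\beta)} + \cone$ forces $\ve_u(\beta) = r(\beta)$ to be independent of $u \in \GG \setminus \{0\}$ for every $\beta \in \Lp(\R_+)$. The main obstacle is then transferring this equality from $\Lp(\R_+)$ to all of $\Lp(\R)$, since the polar cones $\sM_u^\circ = \{\beta \in \Lp(\R) : \ve_u(\beta) \leq 0\}$ only coincide once the equality is in place on the full space. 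My plan is to truncate $\beta_n = \beta \vee (-n)$ so that $\beta_n + n \in \Lp(\R_+)$; cash invariance (which follows from $\E\gamma = 1$, giving $\ve_u(\beta_n + n) = \ve_u(\beta_n) + n$) transfers the equality to $\ve_u(\beta_n) = \ve_v(\beta_n)$ for every $n$, and then $\sigma(\Lp, \Lp[q])$-lower semicontinuity combined with the monotonicity $\beta_n \downarrow \beta$ and dominated convergence on each fixed $\gamma$ allow passage to the limit. The bipolar theorem, applied in the duality between $\Lp[q](\R)$ and $\Lp(\R)$, then gives $\sM_u = \sM_v =: \sM$, and \eqref{eq:4} follows by substituting $\sM_u \equiv \sM$ into \eqref{eq:24a}.

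For \eqref{eq:subX-union}, I would compare support functions. For $u \in \GG$, each $\gamma X$ is integrable since any selection $\xi \in \Lp(X)$ yields $\gamma\xi \in \Lp[1]$ by H\"older's inequality, so \eqref{eq:16} applies and gives $h(\E(\gamma X), u) = \E(\gamma h(X, u))$. Taking the supremum over $\gamma \in \sM$ with $\E\gamma = 1$ reproduces the support function of $\cco \bigcup_{\gamma \in \sM, \E\gamma=1} \E(\gamma X)$, which equals $\ve(h(X, u)) = h(\vE(X), u)$ by \eqref{eq:4}. For $u \notin \GG$, both support functions equal $+\infty$ (each $\E(\gamma X)$ already has $h(\E(\gamma X), u) = \E(\gamma h(X, u)) = +\infty$ since $h(X, u) = +\infty$ a.s.), so the two closed convex sets coincide.
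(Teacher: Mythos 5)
Your overall strategy is the same as the paper's: invoke Theorem~\ref{thr:exact} to write $h(\vE(X),u)=\ve_u(h(X,u))$ with $\ve_u(\beta)=\sup_{\gamma\in\sM_u,\E\gamma=1}\E(\gamma\beta)$, observe that $h(B_\beta+\cone,u)=\beta\|u\|$ on $\GG$ turns the ball-preservation hypothesis into the statement that $\ve_u(\beta)$ is independent of $u$ for $\beta\in\Lp(\R_+)$, and then identify the maximal representing cones. Your sufficiency argument, the derivation of \eqref{eq:4}, and the support-function comparison proving \eqref{eq:subX-union} (including the integrability of $\gamma X$ and the case $u\notin\GG$) are all correct and match the paper's (very terse) proof.

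The gap is in the limit passage of your necessity argument. After truncating to $\beta_n=\beta\vee(-n)$ and using cash invariance you correctly get $\ve_u(\beta_n)=\ve_v(\beta_n)$ for every $n$. But all three tools you then invoke point the same way: monotonicity gives $\ve_u(\beta_n)\downarrow L_u\geq\ve_u(\beta)$; lower semicontinuity gives $\ve_u(\beta)\leq\liminf_n\ve_u(\beta_n)=L_u$ again; and dominated convergence for each fixed $\gamma$ gives $\ve_u(\beta)=\sup_\gamma\inf_n\E(\gamma\beta_n)\leq\inf_n\sup_\gamma\E(\gamma\beta_n)=L_u$. None of these yields the reverse inequality $L_u\leq\ve_u(\beta)$ (a Fatou/continuity-from-above property, i.e.\ an interchange of $\sup_\gamma$ and $\inf_n$, which fails in general when the representing set is not weakly compact). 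So from $\ve_u(\beta_n)=\ve_v(\beta_n)$ you only obtain $L_u=L_v$, not $\ve_u(\beta)=\ve_v(\beta)$, and the bipolar step does not launch. To be fair, the paper itself simply asserts at this point that $u$-independence of the right-hand side on $\Lp(\R_+)$ forces $\sM_u=\sM$ for the maximal cones, so you have located exactly the delicate step; one clean way to close it (when the $\ve_u$ are finite-valued and $p<\infty$) is to note that a finite $\sigma(\Lp,\Lp[q])$-lower semicontinuous convex function on $\Lp$ is norm-continuous, so agreement on the bounded-below random variables --- which you already have from cash invariance --- extends to all of $\Lp(\R)$ by norm-density of $L^\infty$.
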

\begin{proof}
  Assume that $\sM_u$ are constructed as in the proof of
  Theorem~\ref{thr:exact}, so that $\sM_u$ is maximal for each
  $u\in\GG$.  The right-hand side of 
  \begin{displaymath}
    h(\vE(B_\beta+\cone),u) =\sup_{\gamma\in\sM_u,\E\gamma=1} \E (\gamma \beta).
  \end{displaymath}
  does not depend on $u\in\Sphere\cap\GG$ if and only if $\sM_u=\sM$
  for all $u\in\GG$.

  Representation \eqref{eq:4} follows from \eqref{eq:32} with
  $\sM_u=\sM$. In view of \eqref{eq:supremum-vE},
  \begin{align*}
    \sup_{\gamma\in\sM,\E\gamma=1} \E h(\gamma X,u)
    &=\sup_{\gamma\in\sM,\E\gamma=1} \E h(\gamma X,u)
    =\sup_{\gamma\in\sM,\E\gamma=1} \E(\gamma h(X,u))\\ &=\ve(h(X,u))\,. 
  \end{align*}
  By \eqref{eq:4}, the support functions of the both sides of
  \eqref{eq:subX-union} are identical.
\end{proof}  

If $X=\{\xi\}$ is a singleton, there is no need to take the convex
hull on the right-hand side of \eqref{eq:subX-union}.

\begin{example}
  \label{ex:chull}
  For an integrable $X$ and $n\geq1$, consider the sublinear
  expectation
  \begin{displaymath}
    \vE_n^\cup(X)=\E \co(X_1\cup\cdots\cup X_n),
  \end{displaymath}
  It is easy to see that $\vE_n^\cup(X)$ is a minimal constant
  preserving sublinear expectation; it is given by \eqref{eq:4} with
  the corresponding numerical sublinear expectation $\ve(\beta)$,
  being the expected maximum of $n$ i.i.d.\ copies of
  $\beta\in\Lp[1](\R)$. By Corollary~\ref{cor:sub-exact}, this
  sublinear expectation is exact.
\end{example}

\begin{example}
  \label{ex:avar}
  For $\alpha\in(0,1)$, let $\sP_\alpha$ be the family of random
  variables $\gamma$ with values in $[0,\alpha^{-1}]$ and such that
  $\E\gamma=1$. Furthermore, let $\sM$ be the cone generated by
  $\sP_\alpha$, that is $\sM=\{t\gamma:\gamma\in\sP_\alpha, t\geq0\}$. In
  finance, the set $\sP_\alpha$ generates the average Value-at-Risk,
  which is the risk measure obtained as the average quantile, see
  \cite{foel:sch04}. Similarly, the numerical sublinear $\ve$ and
  superlinear $\ue$ expectations generated by this set $\sM$ are
  represented as average quantiles. Namely, $\ve(\beta)$ is the
  average of the quantiles of $\beta$ at levels $t\in(1-\alpha,1)$,
  and $\ue(\beta)$ is the average of the quantiles at levels
  $t\in(0,\alpha)$. The corresponding set-valued sublinear expectation
  $\vE$ satisfies $\E X\subset \vE(X)\subset \alpha^{-1}\E X$. 
\end{example}

\section{Superlinear set-valued expectations}
\label{sec:superl-expect-1}

\subsection{Duality for maximal superlinear expectations}
\label{sec:maxim-superl-expect-1}

Consider a superlinear expectation defined on $\Lp(\sFC)$. If
$\cone=\{0\}$, we deal with all $p$-integrable random closed convex
sets. Recall that $\GG=\cone^o$ is the polar cone to $\cone$.

\begin{theorem}
  \label{thr:nha}
  A map $\uE:\Lp(\sFC)\mapsto\co\sF$ is a scalarly upper
  semicontinuous normalised maximal superlinear expectation if and
  only if
  \begin{equation}
    \label{eq:27}
    \uE(X)=\bigcap_{\eta\in\Lp[0](\Sphere\cap\GG)}\; \bigcap_{\gamma\in\sM_\eta}
    \big\{x:\; \langle x,\E(\gamma\eta)\rangle\leq \E h(X,\gamma\eta)\big\}
  \end{equation}
  for a collection of convex $\sigma(\Lp[q],\Lp)$-closed cones
  $\sM_\eta\subset\Lp[q](\R_+)$ parametrised by
  $\eta\in\Lp[0](\Sphere\cap\GG)$ and such that $\sM_u$ is strictly
  larger than $\{0\}$ 
  for each deterministic $\eta=u\in\Sphere\cap\GG$. 
\end{theorem}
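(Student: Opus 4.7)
The plan is to parallel the proof of Theorem~\ref{thr:sub-sets-dual}, replacing the parametric family of translated cones $\xi+\cone$ with random half-spaces $H_\eta(\beta)$, and using an outer intersection rather than an inner union of linear constraints. Because $\uE$ is maximal, $\uE(X)=\bigcap_{\eta\in\Lp[0](\Sphere\cap\GG)}\uE(\HX)$, so it suffices to characterise the restriction of $\uE$ to random half-spaces.

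For the necessity, fix $\eta\in\Lp[0](\Sphere\cap\GG)$ and set
\[
\sA_\eta=\bigl\{\beta\in\Lp(\R):\; 0\in\uE(H_\eta(\beta))\bigr\}.
\]
Since $H_\eta(\beta)+H_\eta(\beta')=H_\eta(\beta+\beta')$ and $cH_\eta(\beta)=H_\eta(c\beta)$ for $c>0$, the superadditivity and homogeneity of $\uE$ make $\sA_\eta$ a convex cone, while scalar upper semicontinuity of $\uE$ together with the $\sigma(\Lp,\Lp[q])$-continuity of $\beta\mapsto h(H_\eta(\beta),\gamma\eta)=\gamma\beta$ makes $\sA_\eta$ $\sigma(\Lp,\Lp[q])$-closed. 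Monotonicity together with $\uE(\cone)=\cone$ yields $\Lp(\R_+)\subset\sA_\eta$, because $\beta\ge 0$ a.s.\ combined with $\eta\in\GG$ implies $H_\eta(\beta)\supset\cone$. Additivity on singletons and the identity $H_\eta(\beta)-x=H_\eta(\beta-\langle x,\eta\rangle)$ give $\uE(H_\eta(\beta))=\{x:\beta-\langle x,\eta\rangle\in\sA_\eta\}$. Define
\[
\sM_\eta=\{\gamma\in\Lp[q](\R):\E(\gamma\beta)\ge 0\text{ for all }\beta\in\sA_\eta\}.
\]
This is a convex $\sigma(\Lp[q],\Lp)$-closed cone contained in $\Lp[q](\R_+)$ (since $\Lp(\R_+)\subset\sA_\eta$), and the bipolar theorem gives $\sA_\eta=\{\beta:\E(\gamma\beta)\ge 0\;\forall\gamma\in\sM_\eta\}$. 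Substituting $\beta=h(X,\eta)$, using $\E(\gamma h(X,\eta))=\E h(X,\gamma\eta)$, and applying the maximality identity produces \eqref{eq:27}. Nontriviality of $\sM_u$ for deterministic $u\in\Sphere\cap\GG$ is forced by normalisation: if $\sM_u=\{0\}$, then the $u$-slice of \eqref{eq:27} gives $\uE(H_u(0))=\R^d$, so the intersection $\uE(\cone)=\bigcap_u\uE(H_u(0))$ cannot equal $\cone=\bigcap_{u\in\Sphere\cap\GG}H_u(0)$.

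For the sufficiency, each property in Definition~\ref{def:vE} is read off directly from \eqref{eq:27}. Additivity on singletons, monotonicity, homogeneity, and superadditivity follow from the corresponding behaviour of $h(X,\gamma\eta)$ under translations, inclusions, positive scalings and Minkowski sums. For $F\in\sFC$ deterministic, taking $\eta=u\in\Sphere\cap\GG$ deterministic and any $\gamma\in\sM_u\setminus\{0\}$ (so $\E\gamma>0$ since $\gamma\in\Lp[q](\R_+)$) yields the constraint $\langle x,u\rangle\le h(F,u)$; intersecting over $u\in\Sphere\cap\GG$ recovers $F$ via $F=\bigcap_{u\in\Sphere\cap\GG}H_u(h(F,u))$, giving $\uE(F)\subset F$. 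The reverse inclusion $\uE(\cone)\supset\cone$ uses $h(\cone,\gamma\eta)=0$ for $\gamma\eta\in\GG$, confirming normalisation. Scalar upper semicontinuity passes through the intersection of scalar-continuous linear constraints in $h(X,\gamma\eta)$. Maximality of the candidate is verified by expanding $\uE(\HX)$ through \eqref{eq:27}: contributions from $\eta'$ that is not a.s.\ a nonnegative multiple of $\eta$ yield $\E h(\HX,\gamma'\eta')=+\infty$ on a positive-probability event (so those constraints are vacuous), and only the $\eta$-slice survives, which reproduces the $\eta$-contribution to \eqref{eq:27} for $X$.

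The main obstacle is this last point: the careful verification that the candidate \eqref{eq:27} coincides with its own maximal extension, which relies on the infinite-value analysis of $h(H_\eta(\beta),v)$ along random directions $v=\gamma'\eta'$ misaligned with $\eta$, and on the observation that integrating against any nonzero $\gamma'$ then forces $+\infty$ expectations. The parametrisation by \emph{random} normals $\eta$ is what distinguishes \eqref{eq:27} from a straightforward superlinear analogue of \eqref{eq:24} and accounts for the extra layer of complexity in both directions of the proof.
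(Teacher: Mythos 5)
Your proposal follows the paper's argument essentially step for step: the acceptance cone $\sA_\eta=\{\beta\in\Lp(\R):\,0\in\uE(H_\\eta(\beta))\}$, its convexity and $\sigma(\Lp,\Lp[q])$-closedness via scalar upper semicontinuity on half-spaces with a fixed normal, the inclusion $\Lp(\R_+)\subset\sA_\eta$ forcing $\sM_\eta\subset\Lp[q](\R_+)$, the bipolar theorem, and the reassembly of \eqref{eq:27} through the maximality identity \eqref{eq:14}. The sufficiency checks (properties on deterministic sets via a nonzero $\gamma\in\sM_u$, superadditivity via the sum of half-space constraints, upper semicontinuity, and idempotence of the maximal extension) also coincide with the paper's; your extra discussion of the infinite values of $h(\HX,\gamma'\eta')$ is not really needed, since $x\in\uE(\HX)$ already yields the $(\eta,\gamma)$-constraint for $X$ because $\E h(\HX,\gamma\eta)=\E h(X,\gamma\eta)$.

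The one step that does not work as written is your derivation of the nontriviality of $\sM_u$. You argue that $\sM_u=\{0\}$ would make the $u$-constraint vacuous, so that $\uE(\cone)=\bigcap_u\uE(H_u(0))$ could not equal $\cone$. But deleting a single half-space from the representation $\cone=\bigcap_{u\in\Sphere\cap\GG}H_u(0)$ need not enlarge the intersection (take $\cone=\R_-^2$ and $u=(1,1)/\sqrt2$), so normalisation alone does not exclude $\sM_u=\{0\}$ for a non-extreme direction $u$. The paper's argument is both simpler and airtight: property ii) gives $\uE(H_u(-t))\subset H_u(-t)\not\ni 0$ for $t>0$, hence $-t\notin\sA_u$; if $\sM_u$ were $\{0\}$, the bipolar representation \eqref{eq:26} would force $\sA_u=\Lp(\R)$, a contradiction. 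This fix uses only ingredients you already have in place, so the rest of your proof stands.
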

\begin{proof}
  \textsl{Necessity.} Fix $\eta\in\Lp[0](\Sphere\cap\GG)$, and let
  $\sA_\eta$ be the set of $\beta\in\Lp(\R)$ such that
  $\uE(H_\eta(\beta))$ contains the origin. Since
  $\uE(H_\eta(0))\supset \uE(\cone)=\cone$, we have
  $0\in\sA_\eta$. Since $\uE(H_u(t))\subset H_u(t)$, the family
  $\sA_u$ does not contain $\beta=t$ for $t<0$ and
  $u\in\Sphere\cap\GG$. 

  If $\beta_n\to\beta$ in $\sigma(\Lp,\Lp[q])$, then $\E
  h(H_\eta(\beta_n),\gamma\eta)\to \E h(H_\eta(\beta),\gamma\eta)$ for all
  $\gamma\in\Lp[q](\R)$, whence $H_\eta(\beta_n)\to H_\eta(\beta)$
  scalarly in $\sigma(\Lp,\Lp[q])$. Therefore,
  \begin{displaymath}
    \uE(H_\eta(\beta))\supset\limsup\; \uE(H_\eta(\beta_n))
  \end{displaymath}
  by the assumed upper semicontinuity of $\uE$. Thus, $\sA_\eta$ is a
  convex $\sigma(\Lp,\Lp[q])$-closed cone in $\Lp(\R)$. Consider its
  positive dual cone
  \begin{displaymath}
    \sM_\eta=\big\{\gamma\in\Lp[q](\R):\; \E(\gamma\beta)\geq0\; \text{for
      all}\; \beta\in\sA_\eta\big\}.
  \end{displaymath}
  Since $\uE(\cone)=\cone$, we have $\uE(X)\ni 0$ whenever
  $\cone\subset X$ a.s. In view of this, if $\beta$ is
  a.s. nonnegative, then $H_\eta(\beta)$ a.s. contains zero and so
  $\beta\in\sA_\eta$. Thus, each $\gamma$ from $\sM_\eta$ is
  a.s. nonnegative.  The bipolar theorem yields that
  \begin{equation}
    \label{eq:26}
    \sA_\eta=\big\{\beta\in\Lp(\R):\; \E(\gamma\beta)\geq 0\; \text{for all}\;
    \gamma\in\sM_\eta\big\}. 
  \end{equation}
  Since $(-t)\notin\sA_u$, \eqref{eq:26} yields that the cone $\sM_u$
  is strictly larger than $\{0\}$.  Since $\uE$ is assumed to be
  maximal, \eqref{eq:14} implies that
  \begin{align*}
    \uE(X)=\; \ouE(X)&=\bigcap_{\eta\in\Lp[0](\Sphere\cap\GG)}
    \big\{x:\; \uE(\HX-x)\ni 0\big\}\\
    &=\bigcap_{\eta\in\Lp[0](\Sphere\cap\GG)}
    \big\{x:\; h(X,\eta)-\langle x,\eta\rangle\in \sA_\eta\big\}\\
    &=\bigcap_{\eta\in\Lp[0](\Sphere\cap\GG)}
    \bigcap_{\gamma\in\sM_\eta}
    \big\{x:\; \E \langle x,\gamma\eta\rangle\leq \E h(X,\gamma\eta)\}.
  \end{align*}
  
  \noindent
  \textsl{Sufficiency.} It is easy to check that $\uE$ given by
  \eqref{eq:27} is additive on deterministic singletons,
  homogeneous and monotonic. If $F\in\sFC$ is
  deterministic, then letting $\eta=u$ in \eqref{eq:27} be
  deterministic and using the nontriviality of $\sM_u$ yield that
  $\uE(F)\subset F$. Furthermore, $\uE(\cone)=\cone$, since
  $\uE(\cone)$ contains the origin and so is not empty.

  The superadditivity of $\uE$ follows from the fact that
  \begin{multline*}
    \big\{x: \langle x,\E(\gamma\eta)\rangle\leq \E h(X,\gamma\eta)+\E
    h(Y,\gamma\eta)\big\}\\
    \supset \big\{x: \langle x,\E(\gamma\eta)\rangle\leq \E
    h(X,\gamma\eta)\big\}
    +\big\{x: \langle x,\E(\gamma\eta)\rangle\leq \E
    h(Y,\gamma\eta)\big\}.
  \end{multline*}
  It is easy to see that $\uE$ coincides with its maximal extension.

  Note that \eqref{eq:27} is equivalently written as 
  \begin{displaymath}
    \uE(X)=\bigcap_{\eta\in\Lp[0](\Sphere\cap\GG)}\; \bigcap_{\gamma\in\sM_\eta}
    \big\{x:\; \E h(X-x,\gamma\eta)\geq 0\big\}.
  \end{displaymath}
  If $X_n$ scalarly converges to $X$ and $x_{n_k}\to x$ for
  $x_{n_k}\in\uE(X_{n_k})$, $k\geq1$, then $\E h(X_n-x_n,\gamma\eta)$
  converges to $\E h(X-x,\gamma\eta)$ for all 
  $\gamma\in\Lp[q](\R_+)$ and $\eta\in\Lp[0](\Sphere\cap\GG)$. Thus, 
  $\E h(X-x,\gamma\eta)\geq 0$,
  whence $x\in\uE(X)$, and the upper semicontinuity of $\uE$ follows.
\end{proof}

In difference to the sublinear case (see
Theorem~\ref{thr:sub-sets-dual}), the cones $\sM_\eta$ from
Theorem~\ref{thr:nha} do not need to satisfy additional conditions
like those imposed in Lemma~\ref{thr:dsub}.

\begin{corollary}
  \label{cor:sand-U}
  If $1\in\sM_\eta$ for all $\eta$, then $\uE(X)\subset \E X$ for all
  $p$-integrable $X$ and any scalarly upper semicontinuous maximal
  normalised superlinear expectation $\uE$.
\end{corollary}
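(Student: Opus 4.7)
The plan is to apply the dual representation of $\uE$ from Theorem~\ref{thr:nha} and extract from it only the constraints coming from deterministic normals with multiplier $\gamma = 1$; these few constraints already carve out the selection expectation $\E X$.

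First I would write, using Theorem~\ref{thr:nha},
\[
\uE(X)=\bigcap_{\eta\in\Lp[0](\Sphere\cap\GG)}\;\bigcap_{\gamma\in\sM_\eta}
\big\{x:\;\langle x,\E(\gamma\eta)\rangle\leq \E h(X,\gamma\eta)\big\}.
\]
For each deterministic $u\in\Sphere\cap\GG$, the hypothesis $1\in\sM_u$ lets us restrict the inner intersection to the single choice $\gamma=1$. Since $\E h(X,u)=h(\E X,u)$ by \eqref{eq:16}, this yields the half-space constraint
\[
\langle x,u\rangle \leq h(\E X,u),\qquad u\in\Sphere\cap\GG,
\]
for every $x\in\uE(X)$.

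Next I would intersect over $u\in\Sphere\cap\GG$. Because $X\in\Lp(\sFC)$ satisfies $X=X+\cone$ a.s., the support function obeys $h(X,u)=+\infty$ whenever $u\notin\GG$, and hence $h(\E X,u)=+\infty$ for such $u$. Therefore the inequality $\langle x,u\rangle\leq h(\E X,u)$ is vacuous outside $\GG$, and the standard dual representation of a closed convex set gives
\[
\E X=\bigcap_{u\in\Sphere\cap\GG}\big\{x:\;\langle x,u\rangle\leq h(\E X,u)\big\}.
\]
Combining the two displays shows $\uE(X)\subset \E X$, as required.

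The only mild subtlety is verifying that dropping the directions $u\notin\GG$ from the dual representation of $\E X$ is harmless; this is immediate once one notes that $\cone$-closedness of $X$ forces $h(\E X,\cdot)=+\infty$ off $\GG$. No further estimates are needed, since the argument uses only the single functional $\gamma=1$ from each $\sM_u$ and the linearity~\eqref{eq:16} of the selection expectation.
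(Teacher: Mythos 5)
Your argument is correct and is essentially the paper's proof: restrict the intersection in \eqref{eq:27} to deterministic $\eta=u$ and $\gamma=1$, then identify the resulting intersection of half-spaces with $\E X$ via \eqref{eq:16}. The extra remark that directions $u\notin\GG$ may be dropped because $\cone$-closedness forces $h(\E X,\cdot)=+\infty$ off $\GG$ is a detail the paper leaves implicit, but it is accurate and does not change the route.
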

\begin{proof}
  Restrict the intersection in \eqref{eq:27} to deterministic
  $\eta=u$ and $\gamma=1$, so that the right-hand side of \eqref{eq:27}
  becomes $\E X$. 
\end{proof}

\begin{example}
  \label{ex:half-space-super}
  Let $X=H_\eta(\beta)$ be the half-space with normal
  $\eta\in\Lp[0](\Sphere)$ and $\beta\in\Lp(\R)$. If $\cone=\{0\}$,
  the maximal superlinear expectation of $X$ is given by
  \begin{displaymath}
    \uE(H_\eta(\beta)) =\bigcap_{\gamma\in\sM_\eta}
    \big\{x:\; \langle x,\E(\gamma\eta)\rangle\leq \E(\gamma\beta)\big\}.
  \end{displaymath}
  Assume that $d=2$ and let $\eta=(1,\pi)$ with $\pi$ being an almost
  surely positive random variable.  We have
  \begin{align*}
    \uE(H_\eta(\beta))&=\bigcap_{\gamma\in\sM_\eta,\E\gamma=1}
    \big\{(x_1,x_2):\; x_1+x_2\E(\gamma\pi)\leq
    \E(\gamma\beta)\big\}\\
    &=\big\{(x_1,x_2):\; x_1\leq \ue(\beta-x_2\pi)\big\},
  \end{align*}
  where $\ue$ is the numerical superlinear expectation with the
  generating set $\sM_\eta$.  In particular, if $\beta=0$ a.s., then
  \begin{multline*}
    \uE(H_\eta(0))=
    \big\{(x_1,x_2):\; x_2\geq0, x_1\leq x_2\ue(-\pi)\big\}\\
    \cup \big\{(x_1,x_2):\; x_2<0, x_1\leq -x_2\ue(\pi)\big\}.
  \end{multline*}
  Therefore, $\uE(H_\eta(0))= H_{w'}(0)\cap H_{w''}(0)$, where
  $w'=(1,\ve(\pi))$ and $w''=(1,\ue(\pi))$ for the exact dual pair
  $\ve$ and $\ue$ of nonlinear expectations with the representing set
  $\sM_\eta$. 
\end{example}

\subsection{Reduced maximal extension}
\label{sec:superl-expect}

The following result can be proved similarly to Theorem~\ref{thr:nha}
for the reduced maximal extension from \eqref{eq:9}. 

\begin{theorem}
  \label{cor:super-ha}
  A map $\ouEnot:\Lp(\sFC)\mapsto\co\sF$ is a scalarly upper
  semicontinuous normalised reduced maximal superlinear expectation if
  and only if
  \begin{equation}
    \label{eq:27a}
    \ouEnot(X)=\bigcap_{v\in\Sphere\cap\GG}
    \Big\{x:\; \langle x,v\rangle
    \leq \inf_{\gamma\in\sM_v,\E\gamma=1}\E (\gamma h(X,v))\Big\}
  \end{equation}
  for a collection of nontrivial convex $\sigma(\Lp[q],\Lp)$-closed cones
  $\sM_v\subset\Lp[q](\R_+)$, $v\in\Sphere\cap\GG$.
\end{theorem}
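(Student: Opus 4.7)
The plan is to follow the argument of Theorem~\ref{thr:nha} with the parametrising direction restricted to deterministic $v\in\Sphere\cap\GG$, which is precisely the form of the reduced maximal extension \eqref{eq:9}.

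For necessity, fix $v\in\Sphere\cap\GG$ and set
\[
\sA_v=\{\beta\in\Lp(\R):\;\ouEnot(H_v(\beta))\ni 0\}.
\]
Homogeneity and additivity on deterministic singletons make $\sA_v$ a convex cone; the assumed scalar upper semicontinuity gives $\sigma(\Lp,\Lp[q])$-closedness, since $\beta_n\to\beta$ in $\sigma(\Lp,\Lp[q])$ forces $H_v(\beta_n)\to H_v(\beta)$ scalarly. Because $v\in\GG$ implies $h(\cone,v)=0$, any nonnegative $\beta$ yields $\cone\subset H_v(\beta)$, so normalisation and monotonicity give $\Lp(\R_+)\subset\sA_v$; on the other hand, $\ouEnot(H_v(t))\subset H_v(t)$ for deterministic $t$ excludes negative constants from $\sA_v$. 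Consequently, the polar
\[
\sM_v=\{\gamma\in\Lp[q](\R):\;\E(\gamma\beta)\geq 0\text{ for all }\beta\in\sA_v\}
\]
is a nontrivial convex $\sigma(\Lp[q],\Lp)$-closed cone contained in $\Lp[q](\R_+)$. By Lemma~\ref{lemma:primal} and the reduced maximal assumption,
\[
\ouEnot(X)=\bigcap_{v\in\Sphere\cap\GG}\{x:\;h(X,v)-\langle x,v\rangle\in\sA_v\}.
\]
The bipolar theorem rewrites $\beta\in\sA_v$ as $\E(\gamma\beta)\geq 0$ for every $\gamma\in\sM_v$; substituting $\beta=h(X,v)-\langle x,v\rangle$ and normalising the nonzero $\gamma$ by $\E\gamma>0$ produces \eqref{eq:27a}.

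For sufficiency, additivity on deterministic singletons, monotonicity, and homogeneity follow at once from \eqref{eq:27a} and $h(X+a,v)=h(X,v)+\langle a,v\rangle$. Superadditivity uses $\inf_\gamma\E(\gamma h(X+Y,v))\geq\inf_\gamma\E(\gamma h(X,v))+\inf_\gamma\E(\gamma h(Y,v))$ together with the elementary Minkowski inclusion of the corresponding half-spaces. Nontriviality of $\sM_v$ combined with $h(\cone,v)=0$ yields $\ouEnot(F)\subset F$ for deterministic $F\in\sFC$ (exactly as in Theorem~\ref{thr:nha}) and, using the bipolar identity $\bigcap_{v\in\Sphere\cap\GG}H_v(0)=\cone$, normalisation $\ouEnot(\cone)=\cone$. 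Scalar upper semicontinuity comes from the fact that $X_n\to X$ scalarly makes $\E(\gamma h(X_n,v))\to\E(\gamma h(X,v))$ for each fixed $\gamma\in\Lp[q](\R_+)$ and $v$, so any accumulation point of a sequence $x_{n_k}\in\ouEnot(X_{n_k})$ satisfies the defining inequalities of $\ouEnot(X)$. That $\ouEnot$ coincides with its own reduced maximal extension is built into the intersection over $v\in\Sphere\cap\GG$ in \eqref{eq:27a}.

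The main obstacle is ensuring $\sM_v\subset\Lp[q](\R_+)$ and $\sM_v\neq\{0\}$ simultaneously: the former relies on the crucial containment $\cone\subset H_v(\beta)$ for $\beta\geq 0$, which is valid precisely because $v\in\GG$, while the latter hinges on the geometric restriction $\ouEnot(H_v(t))\subset H_v(t)$ for negative deterministic $t$, without which $\sA_v$ would exhaust $\Lp(\R)$ and its polar would collapse, trivialising the representation.
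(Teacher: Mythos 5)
Your proposal is correct and follows essentially the route the paper intends: the paper gives no separate proof of this theorem, stating only that it "can be proved similarly to Theorem~\ref{thr:nha}," and your argument is exactly that adaptation, with the acceptance cones $\sA_v$, their polars $\sM_v$, the bipolar theorem, and Lemma~\ref{lemma:primal} deployed in the same way but restricted to deterministic normals $v\in\Sphere\cap\GG$. The key points — nonnegativity of $\sM_v$ from $\cone\subset H_v(\beta)$ for $\beta\geq 0$, nontriviality from $\ouEnot(H_v(t))\subset H_v(t)$ for $t<0$, and the normalisation $\E\gamma=1$ — all match the paper's treatment.
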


It is possible to take the intersection in \eqref{eq:27a} over all
$v\in\Sphere$, since $h(X,v)=\infty$ for $v\notin\GG$.  Representation
\eqref{eq:27a} can be equivalently written as the intersection of the
half-spaces $\{x:\; \langle x,v\rangle \leq \ue_v(h(X,v))\}$, where
\begin{equation}
  \label{eq:8}
  \ue_v(\beta)=\inf_{\gamma\in\sM_v,\E\gamma=1}\E (\gamma\beta)
\end{equation}
is a superlinear univariate expectation of $\beta\in\Lp(\R)$ for each
$v\in\Sphere\cap\GG$.  
The superlinear expectation \eqref{eq:27a} is law invariant if the families
$\sM_v$ are law-complete for all $v$.

\begin{corollary}
  \label{cor:rm-dil}
  Let $\ouEnot:\Lp(\sFC)\mapsto\co\sF$ be a scalarly upper
  semicontinuous law invariant normalised reduced maximal superlinear
  expectation, and let the probability space be non-atomic. Then
  $\ouEnot$ is dilatation monotonic, meaning that
  \begin{displaymath}
    \ouEnot(X)\subset \ouEnot(\E(X|\ssalg))
  \end{displaymath}
  for each sub-$\sigma$-algebra $\ssalg\subset\salg$ and all
  $X\in\Lp(\sFC)$. In particular, $\ouEnot(X)\subset \E X$.
\end{corollary}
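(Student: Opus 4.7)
The plan is to mimic the argument for Corollary \ref{cor:sand-E-cp}, but applied directionally to the superlinear scalar expectations $\ue_v$ appearing in the representation of $\ouEnot$ from Theorem~\ref{cor:super-ha}. By that theorem,
\[
\ouEnot(X)=\bigcap_{v\in\Sphere\cap\GG}\{x:\; \langle x,v\rangle\leq \ue_v(h(X,v))\},
\]
with $\ue_v$ given by the representation \eqref{eq:8}. The whole proof is a pointwise-in-$v$ reduction to a classical fact about law-invariant concave monetary functionals on $\Lp(\R)$.

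First, I would observe that the law invariance of $\ouEnot$ forces each cone $\sM_v$ (for deterministic $v$) to be law-complete: if $\beta$ and $\beta'$ are equidistributed, then $H_v(\beta)$ and $H_v(\beta')$ are equidistributed as random closed sets, so they produce the same value of $\ouEnot$, hence the set $\sA_v=\{\beta:\;\ouEnot(H_v(\beta))\ni 0\}$ from the proof of Theorem~\ref{thr:nha} is law-complete, and so is its polar $\sM_v$. Consequently every $\ue_v$ defined by \eqref{eq:8} is a law-invariant scalarly upper semicontinuous normalised superlinear expectation on $\Lp(\R)$.

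Next, I would invoke the dilatation monotonicity of law-invariant superlinear expectations on a non-atomic probability space: $\ue_v(\E(\beta|\ssalg))\geq \ue_v(\beta)$ for every $\beta\in\Lp(\R)$ and every sub-$\sigma$-algebra $\ssalg$. This is the superlinear counterpart of \cite[Cor.~4.59]{foel:sch04} (already used in Corollary~\ref{cor:sand-E-cp}); it follows either directly from Jensen's inequality for law-invariant concave monetary functionals, or via the exact dual pair $\ue_v(\beta)=-\ve_v(-\beta)$ and the sublinear statement applied to $-\beta$. Combining this with the identity $h(\E(X|\ssalg),v)=\E(h(X,v)|\ssalg)$ (the defining property of the conditional selection expectation from \cite{hia:ume77}), I obtain
\[
\ue_v(h(X,v))\leq \ue_v\!\bigl(\E(h(X,v)|\ssalg)\bigr)=\ue_v(h(\E(X|\ssalg),v)),
\]
so the half-space $\{x:\langle x,v\rangle\leq \ue_v(h(X,v))\}$ is contained in $\{x:\langle x,v\rangle\leq\ue_v(h(\E(X|\ssalg),v))\}$. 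Intersecting over all $v\in\Sphere\cap\GG$ yields $\ouEnot(X)\subset\ouEnot(\E(X|\ssalg))$. Finally, specializing to the trivial $\sigma$-algebra $\ssalg=\{\emptyset,\Omega\}$ makes $\E(X|\ssalg)=\E X$ a deterministic element of $\sFC$, and since $\ouEnot$ is constant preserving (as noted before Section~\ref{sec:exact-nonl-expect}), $\ouEnot(\E X)=\E X$, giving $\ouEnot(X)\subset\E X$.

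The one delicate step is the appeal to superlinear dilatation monotonicity, since the paper only quotes the sublinear version. If one wants a self-contained argument one can instead note that $\ve_v(\beta):=-\ue_v(-\beta)$ is a law-invariant scalarly lower semicontinuous sublinear expectation to which \cite[Cor.~4.59]{foel:sch04} directly applies, and then rewrite the conclusion in terms of $\ue_v$; everything else in the argument is purely formal manipulation of support functions and intersections.
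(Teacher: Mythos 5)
Your proposal is correct and follows essentially the same route as the paper: the paper's proof likewise reduces to the dilatation monotonicity of the law-invariant concave functionals $\ue_v$ via \cite[Cor.~4.59]{foel:sch04} and the identity $h(\E(X|\ssalg),v)=\E(h(X,v)|\ssalg)$, then intersects over $v$. The extra details you supply (law-completeness of the $\sM_v$, the sublinear-to-superlinear duality for the Föllmer--Schied result) are points the paper passes over silently, and they are handled correctly.
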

\begin{proof}
  Since $\sM_u$ is law-complete, $\ue_v(\beta)$ given by \eqref{eq:8} is
  a law invariant concave function of $\beta\in\Lp(\R)$. Thus, $\ue_v$
  is dilatation monotonic by \cite[Cor.~4.59]{foel:sch04}, meaning
  that $\ue(\E(\xi|\ssalg))\geq \ue(\xi)$. Hence,
  \begin{displaymath}
    \ue_v(h(X,v))\leq \ue_v(\E(h(X,v)|\ssalg))
    =\ue_v(h(\E(X|\ssalg),v)). \qedhere
  \end{displaymath}
\end{proof}

\begin{example}
  \label{ex:z-eta=z}
  If $\sM_v=\sM$ in \eqref{eq:27a} is nontrivial and does not depend
  on $v$, then \eqref{eq:27a} turns into
  \begin{displaymath}
    \ouEnot(X)=\bigcap_{v\in\Sphere\cap\GG}
    \big\{x:\; \langle x,v\rangle\leq \ue(h(X,v))\big\},
  \end{displaymath}
  where $\ue$ given by \eqref{eq:8} 
  is the numerical superlinear expectation with the representing set
  $\sM$.  In this case, $\ouEnot(X)$ is the largest convex set whose
  support function is dominated by $\ue(h(X,v))$, that is,
    \begin{equation}
    \label{eq:6}
    h(\ouEnot(X),v)\leq \ue(h(X,v)), \quad v\in\GG. 
  \end{equation}
  Note that $\ue(h(X,\cdot))$ may fail to be a support function. Since
  \begin{displaymath}
    \bigcap_{v\in\Sphere\cap\GG}\big\{x:\; \langle x,v\rangle\leq \E(\gamma
    h(X,v))\}=\E(\gamma X)
  \end{displaymath}
  for $X\in\Lp(\sFC)$, this reduced maximal superlinear expectation
  admits the equivalent representation as
  \begin{equation}
    \label{eq:7} 
    \ouEnot(X)=\bigcap_{\gamma\in\sM,\E\gamma=1} \E(\gamma X).
  \end{equation}
\end{example}  

\begin{example}
  \label{ex:riesz}
  Let $X=\xi+\KK$ for a $\xi\in\Lp(\R^d)$ and a deterministic convex
  closed cone $\KK$ that is different from the whole space. Then
  \begin{equation}
    \label{eq:25}
    \ouEnot(\xi+\KK)=
    \bigcap_{v\in\Sphere\cap \GG}
    \big\{x:\; \langle x,v\rangle \leq
    \ue_v(\langle\xi,v\rangle)\big\}.
  \end{equation}
  If $\sM_v=\sM$ for all $v\in\Sphere\cap \GG$, then $\ue_v=\ue$ and 
  \begin{displaymath}
    \ouEnot(\xi+\KK)
    =\bigcap_{\gamma\in\sM,\E\gamma=1}\big(\E(\gamma\xi)+\KK).
  \end{displaymath}
  If $\KK$ is a Riesz cone, then $\ouEnot(\xi+\KK)=x+\KK$ for some
  $x$, since an intersection of translations of $\KK$ is again a
  translation of $\KK$, see \cite[Th.~26.11]{lux:zaan71}.
\end{example}

\begin{example}
  \label{ex:intersection}
  Let $\uE(X)=\E(X_1\cap \cdots\cap
  X_n)$ for $n$ independent copies of $X$, noticing that the
  expectation is empty if the intersection $X_1\cap\cdots\cap X_n$ is
  empty with positive probability. This superlinear
  expectation is neither maximal, nor even a reduced maximal one. For
  instance,
  \begin{displaymath}
    \uE(H_v(X))=H_v\big(\E\min(h(X_i,v), i=1,\dots,n)\big),
  \end{displaymath}
  so that the reduced maximal extension $\ouEnot(X)$ is the largest
  convex set whose support function is dominated by $\uE(H_v(X))$,
  $v\in\Sphere$. However, the support function of $\E(X_1\cap
  \cdots\cap X_n)$ is the expectation of the largest sublinear
  function dominated by $\min(h(X_i,v), i=1,\dots,N)$, and so $\uE(X)$
  may be a strict subset of $\ouEnot(X)$.

  For instance, let $X=\xi+\R_-^d$ for $\xi\in\Lp(\R^d)$. Then
  \begin{displaymath}
    \uE(X)=\E \min(\xi_1,\dots,\xi_n)+\R_-^d,
  \end{displaymath}
  where the minimum is applied coordinatewisely to independent copies
  of $\xi$, while $\ouEnot(X)$ is the largest convex set whose support
  function is dominated by $\E\min(\langle
  \xi_i,v\rangle,i=1,\dots,n)$, $v\in\R_+^d$. Obviously,
  \begin{displaymath}
    \min(\langle \xi_i,v\rangle,i=1,\dots,n)
    \geq \langle\min(\xi_1,\dots,\xi_n),v\rangle
  \end{displaymath}
  with a possibly strict inequality. 
\end{example}

\subsection{Minimal extension of a superlinear expectation}
\label{sec:select-superl-expect}

In any nontrivial case, the superlinear expectation of a
nondeterministic singleton is empty.  Indeed, if $\xi\in\Lp(\R^d)$,
then \eqref{eq:27a} yields that
\begin{displaymath}
  \uE(\{\xi\})\subset \ouEnot(\{\xi\})\subset \bigcap_{v\in\Sphere}
  \big\{x:\; \langle x,v\rangle
  \leq \inf_{\gamma\in\sM_v,\E\gamma=1}\E\langle \xi,\gamma v\rangle\big\}, 
\end{displaymath}
which is not empty only if 
\begin{displaymath}
  \sup_{\gamma\in\sM_{-v},\E\gamma=1}\E\langle \xi,\gamma v\rangle\leq
  \inf_{\gamma\in\sM_v,\E\gamma=1}\E\langle \xi,\gamma v\rangle
\end{displaymath}
for all $v\in\Sphere$.  In the setting of Example~\ref{ex:z-eta=z},
$\uE(\{\xi\})$ is empty unless $\ue(\langle \xi,v\rangle)+\ue(-\langle
\xi,v\rangle)\geq 0$ for all $u$. The latter means that $\ue(\langle
\xi,v\rangle)=\ve(\langle \xi,v\rangle)$ for the exact dual pair of
real-valued nonlinear expectations. Equivalently,
$\uE(\{\xi\})=\emptyset$ if $\E(\gamma\xi)\neq\E(\gamma'\xi)$ for
some $\gamma,\gamma'\in \sM$. If this is the case for all $\xi\in\Lp(X)$,
then the minimal extension of $\uE(X)$ is the set $F_X$ of fixed
points of $X$, see Example~\ref{ex:fp-nonlinear}.  Thus, it is not
feasible to come up with a nontrivial minimal extension of the
superlinear expectation if $\cone=\{0\}$.

A possible way to ensure non-emptiness of the minimal extension of
$\uE(X)$ is to apply it to random sets from $\Lp(\sFC)$ with a cone
$\cone$ having interior points, since then at least one of $h(X,v)$ and
$h(X,-v)$ is almost surely infinite for all $v\in\Sphere$. The minimal
extension of $\uE$ is given by
\begin{equation}
  \label{eq:33}
  \uuE(X)=\clo \bigcup_{\xi\in\Lp(X)}\uE(\xi+\cone).
\end{equation}
The following result, in particular, implies that the union on the
right-hand side of \eqref{eq:33} is a convex set, cf. \eqref{eq:13}. 

\begin{theorem}
  \label{thr:selection-super}
  The function $\uuE$ given by \eqref{eq:33} is a superlinear
  expectation. If \;$\uE$ in \eqref{eq:33} is reduced maximal and
  satisfies the conditions of Corollary~\ref{cor:rm-dil}, then its
  minimal extension \,$\uuE$ is law invariant and dilatation monotonic.
\end{theorem}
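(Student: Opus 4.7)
The statement has two parts. First I would verify that $\uuE$ satisfies axioms i)--v) of Definition~\ref{def:vE} with the superlinear variants. Second I would show law invariance and dilatation monotonicity under the extra hypotheses.

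For the axioms, the crucial preliminary observation is that the union on the right-hand side of \eqref{eq:33} is already convex, so the closed convex hull reduces to a closure. Indeed, given $\xi_1,\xi_2\in\Lp(X)$ and $\lambda\in(0,1)$, the a.s.\ convexity of $X$ gives $\xi_\lambda:=\lambda\xi_1+(1-\lambda)\xi_2\in\Lp(X)$, and the superadditivity and homogeneity of $\uE$ together with $\lambda\cone+(1-\lambda)\cone=\cone$ yield
\begin{displaymath}
  \lambda\,\uE(\xi_1+\cone)+(1-\lambda)\,\uE(\xi_2+\cone)
  \subset \uE(\xi_\lambda+\cone),
\end{displaymath}
which is a subset of the union. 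The axioms themselves follow by routine arguments: additivity on singletons uses $\Lp(X+a)=\Lp(X)+a$ and the corresponding property of $\uE$; for property ii), any $\xi\in\Lp(F)$ with deterministic $F\in\sFC$ satisfies $\xi+\cone\subset F$ a.s., so monotonicity of $\uE$ and property ii) for $\uE$ give $\uE(\xi+\cone)\subset \uE(F)\subset F$; monotonicity and homogeneity are immediate from $\Lp(X)\subset\Lp(Y)$ when $X\subset Y$ a.s., and $\Lp(cX)=c\Lp(X)$; finally, superadditivity follows because $\xi+\eta\in\Lp(X+Y)$ whenever $\xi\in\Lp(X)$, $\eta\in\Lp(Y)$, and the superadditivity of $\uE$ together with $\cone+\cone=\cone$ gives
$\uE(\xi+\cone)+\uE(\eta+\cone)\subset \uE(\xi+\eta+\cone)\subset \uuE(X+Y)$.

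For law invariance under the hypotheses of Corollary~\ref{cor:rm-dil}, I would use that on a non-atomic probability space the family of distributions of $p$-integrable selections of a random closed set $X$ depends only on the law of $X$ (by Artstein's characterisation of selection distributions via capacities). Combined with law invariance of $\uE$ on random sets $\xi+\cone$ (inherited from its representation in Theorem~\ref{cor:super-ha} with law-complete $\sM_v$), the two unions defining $\uuE(X)$ and $\uuE(X')$ coincide when $X\stackrel{d}{=}X'$.

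For dilatation monotonicity, I would use the standard fact that $\E(\xi\mid\ssalg)$ is a selection of $\E(X\mid\ssalg)$ whenever $\xi\in\Lp(X)$. Since $\cone$ is deterministic, $\E(\xi+\cone\mid\ssalg)=\E(\xi\mid\ssalg)+\cone$, and the dilatation monotonicity of $\uE$ established in Corollary~\ref{cor:rm-dil} yields
\begin{displaymath}
  \uE(\xi+\cone)\subset \uE(\E(\xi\mid\ssalg)+\cone)\subset \uuE(\E(X\mid\ssalg)).
\end{displaymath}
Taking the union over $\xi\in\Lp(X)$ and then the closure proves the claim. The main obstacle I anticipate is the law invariance step: one has to invoke the non-atomicity assumption to guarantee that any selection distribution attainable on $X$ is also realisable as the distribution of a genuine selection of any $X'$ with $X'\stackrel{d}{=}X$, rather than merely matching distributions up to coupling.
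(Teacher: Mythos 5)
Your verification of the superlinearity axioms matches the paper's proof step for step: the same convexity argument via $\lambda\,\uE(\xi_1+\cone)+(1-\lambda)\,\uE(\xi_2+\cone)\subset\uE(\xi_\lambda+\cone)$, the same superadditivity chain through $\uE(\xi+\cone)+\uE(\eta+\cone)\subset\uE(\xi+\eta+\cone)$, and your route to $\uuE(F)\subset F$ via monotonicity of $\uE$ is if anything cleaner than the paper's displayed inclusion over deterministic points. Your dilatation-monotonicity argument is also exactly the paper's. The one place you genuinely diverge is law invariance. The paper does \emph{not} invoke Artstein's selection theorem; instead it uses the dilatation monotonicity of $\uE$ to show that the union in \eqref{eq:33} is unchanged if one restricts to $\salg_X$-measurable selections (since $\E(\xi\mid\salg_X)\in\Lp(X)$ and $\uE(\xi+\cone)\subset\uE(\E(\xi\mid\salg_X)+\cone)$), and then cites the fact that the families of $\salg_X$-measurable selections of identically distributed sets have the same distributions. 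Your route — non-atomicity plus Artstein's characterisation, so that the whole family of selection distributions is determined by the law of $X$ — also works under the hypotheses of Corollary~\ref{cor:rm-dil} (which assumes non-atomicity), and it avoids the reduction step; but it uses a heavier tool and, as you rightly flag, one must be careful that realisability of a given selection distribution on the \emph{given} probability space is what Artstein's theorem delivers only in the non-atomic case. The paper's reduction to $\salg_X$-measurable selections sidesteps that realisability question entirely, which is why it is the more economical argument here. Both are valid.
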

\begin{proof}
  Let $x$ and $x'$ belong to the union on the right-hand side of
  \eqref{eq:33} (without closure). Then $x\in\uE(\xi+\cone)$ and
  $x'\in\uE(\xi'+\cone)$, and the superlinearity of $\uE$ yields that
  \begin{displaymath}
    tx+(1-t)x'\in t\uE(\xi+\cone)+(1-t)\uE(\xi'+\cone)
    \subset \uE(t\xi+(1-t)\xi'+\cone)
  \end{displaymath}
  for each $t\in[0,1]$. Since $t\xi+(1-t)\xi'$ is a selection of $X$,
  the convexity of $\uuE(X)$ easily follows.

  The additivity on deterministic singletons, monotonicity and
  homogeneity properties are evident from
  \eqref{eq:33}. If $F\in\sFC$ is deterministic, then
  \begin{displaymath}
    \uuE(F)\subset \clo\bigcup_{x\in F}\uE(x+\cone)
    \subset \clo\bigcup_{x\in F}(x+\cone)=F.
  \end{displaymath}
  For the superadditivity property, consider $x$ and $y$ from the
  nonclosed right-hand side of \eqref{eq:33} for $X$ and $Y$,
  respectively. Then $x\in\uE(\xi+\cone)$ and $y\in\uE(\eta+\cone)$
  for some $\xi\in\Lp(X)$ and $\eta\in\Lp(Y)$. Hence,
  \begin{displaymath}
    x+y\in \uE(\xi+\cone)+\uE(\eta+\cone)\subset 
    \uE(\xi+\eta+\cone)\subset \uuE(X+Y).
  \end{displaymath}

  Now assume that $\uE$ is reduced maximal. Let $\salg_X$ be the
  $\sigma$-algebra generated by $X$. The convexity of $X$ implies that
  $\E(\xi|\salg_X)$ is a selection of $X$ for any
  $\xi\in\Lp(X)$. By the dilatation monotonicity property
  from Corollary~\ref{cor:rm-dil}, it is possible to
  replace $\xi\in\Lp(X)$ in \eqref{eq:33} with the
  family of $\salg_X$-measurable $p$-integrable selections of
  $X$. These families coincide for two identically distributed sets,
  see \cite[Prop.~1.4.5]{mo1}. The dilatation monotonicity
  $\uuE(X)\subset \uuE(\E(X|\salg))$ follows from Corollary~\ref{cor:rm-dil}.
\end{proof}

Below we establish the upper semicontinuity of the minimal extension.

\begin{theorem} 
  \label{thr:fatou}
  Assume that $p\in(1,\infty]$, $\uE$ is upper semicontinuous, and
  that $0\notin\uE(\xi+\cone)$ for all nontrivial
  $\xi\in\Lp(\cone)$. Then the minimal extension $\uuE$ is scalarly
  upper semicontinuous.
\end{theorem}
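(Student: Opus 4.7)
The plan is to combine weak compactness of the $\Lp(\R^d)$-unit ball (available when $p>1$) with the upper semicontinuity of $\uE$. Given a scalarly convergent $X_n\to X$ in $\Lp(\sFC)$ and a subsequence $x_{n_k}\in\uuE(X_{n_k})$ with $x_{n_k}\to x$, the definition~\eqref{eq:33} of $\uuE$ as the closure of a union lets me choose, after a perturbation vanishing as $k\to\infty$, selections $\xi_{n_k}\in\Lp(X_{n_k})$ with $x_{n_k}\in\uE(\xi_{n_k}+\cone)$; the goal is to produce $\xi_\infty\in\Lp(X)$ with $x\in\uE(\xi_\infty+\cone)$.

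Granting for the moment that $\{\xi_{n_k}\}$ is bounded in $\Lp(\R^d)$, the argument concludes cleanly. Reflexivity for $p\in(1,\infty)$ or $\sigma(L^\infty,L^1)$-compactness for $p=\infty$ yields a further subsequence for which $\xi_{n_k}\to\xi_\infty$ in $\sigma(\Lp,\Lp[q])$. Passing to the limit in $\E\langle\xi_{n_k},\zeta\rangle\le\E h(X_{n_k},\zeta)$ via the scalar convergence $X_n\to X$ (the case $\E h(X,\zeta)=\infty$ being trivial) and applying Lemma~\ref{lemma:sel-domination} to $Y=\{\xi_\infty\}$ puts $\xi_\infty$ in $\Lp(X)$. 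Since $h(\xi+\cone,\zeta)=\langle\xi,\zeta\rangle$ for $\zeta\in\Lp[q](\GG)$ and equals $+\infty$ otherwise, the weak convergence $\xi_{n_k}\to\xi_\infty$ translates into scalar convergence $\xi_{n_k}+\cone\to\xi_\infty+\cone$. Upper semicontinuity of $\uE$ then delivers $x=\lim x_{n_k}\in\uE(\xi_\infty+\cone)\subset\uuE(X)$, as required.

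The main obstacle is thus securing the $\Lp$-bound on $\{\xi_{n_k}\}$, and it is here that both $p>1$ and $0\notin\uE(\xi+\cone)$ for nontrivial $\xi\in\Lp(\cone)$ are needed. Arguing by contradiction, I assume $\|\xi_{n_k}\|_p\to\infty$ and normalize $\zeta_{n_k}=\xi_{n_k}/\|\xi_{n_k}\|_p$. By homogeneity of $\uE$, the point $x_{n_k}/\|\xi_{n_k}\|_p\in\uE(\zeta_{n_k}+\cone)$ tends to $0$. Extracting a weak(*) limit $\zeta_\infty$ and using the rescaled bound $\E\langle\zeta_{n_k},v\one_A\rangle\le\E[h(X_{n_k},v)\one_A]/\|\xi_{n_k}\|_p$, with $A$ truncated to sublevel sets of $h(X,v)$ to cope with the possibility $\E h(X,v)=\infty$, forces $\langle\zeta_\infty,v\rangle\le 0$ a.s.\ for $v$ ranging over a countable dense subset of $\GG$, so $\zeta_\infty\in\Lp(\cone)$. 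Scalar convergence $\zeta_{n_k}+\cone\to\zeta_\infty+\cone$ together with the upper semicontinuity of $\uE$ yields $0\in\uE(\zeta_\infty+\cone)$, and the nontriviality hypothesis forces $\zeta_\infty=0$. The truly delicate final step — and the main technical difficulty — is to turn $\|\zeta_{n_k}\|_p=1$ and $\zeta_\infty=0$ into a contradiction, as the two are consistent in $\Lp$ when $p>1$. I would handle this by a Komlós-type extraction: conditioning on $\salg_{X_{n_k}}$ preserves the selection property by convexity of $X_{n_k}$, and uniform integrability of $|\zeta_{n_k}|^p$ (where $p>1$ is essential) together with an almost-sure convergent subsequence of convex combinations should produce a nontrivial limit in $\Lp(\cone)$, contradicting $\zeta_\infty=0$.
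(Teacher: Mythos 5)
Your argument tracks the paper's proof almost step for step: in the bounded case you use weak compactness of the selections $\xi_{n_k}\in\Lp(X_{n_k})$, pass to the limit in $\E\langle\xi_{n_k},\zeta\rangle\le\E h(X_{n_k},\zeta)$, apply Lemma~\ref{lemma:sel-domination} to identify the limit as a selection of $X$, and conclude by upper semicontinuity of $\uE$ on sets $\xi+\cone$ --- exactly as the paper does. The unbounded case is also set up identically: normalise, extract a weak limit $\zeta_\infty$, show $\zeta_\infty\in\Lp(\cone)$ (your truncation by sublevel sets of $h(X,\zeta)$ is a sensible refinement here) and $0\in\uE(\zeta_\infty+\cone)$, then invoke the hypothesis that $0\notin\uE(\xi+\cone)$ for nontrivial $\xi\in\Lp(\cone)$. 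You correctly identify the one genuinely delicate point: the hypothesis only bites if $\zeta_\infty$ is nontrivial, and a sequence on the unit sphere of $\Lp$ can perfectly well converge weakly to $0$. (The paper closes this step by asserting that the weak limit satisfies $\E\|\xi'\|=1$, which is exactly the claim in question.)

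The gap is that your proposed repair cannot work as stated. A Koml\'os extraction yields a subsequence whose Ces\`aro means converge almost surely; but Ces\`aro means of a weakly convergent sequence converge weakly to the same limit $\zeta_\infty=0$, and under the uniform integrability that the $L^p$-bound with $p>1$ provides, the almost sure limit of those means must coincide with the weak limit. More generally, by Mazur's theorem some sequence of convex combinations of $\{\zeta_{n_k}\}$ converges to $\zeta_\infty=0$ in norm, so no device based on convex combinations can manufacture a nontrivial element of $\Lp(\cone)$ from a weakly null normalised sequence. Consequently the contradiction needed to rule out $\|\xi_{n_k}\|_p\to\infty$ is not obtained, and the proof remains open precisely where you suspected; closing it requires an additional idea that exploits the structure of $\uE$ or of the sets $X_n$ (for instance, ruling out concentration of the normalised selections), not merely weak compactness.
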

\begin{proof}
  It suffices to omit the closure in \eqref{eq:33} and
  consider $x_n\in\uE(X_n)$ such that $x_n\to x$ and $X_n\to X$
  scalarly in $\sigma(\Lp,\Lp[q])$. For each $n\geq1$, there exists a
  $\xi_n\in\Lp(X_n)$ such that $x_n\in\uE(\xi_n+\cone)$.

  Assume first that $p\in(1,\infty)$ and
  $\sup_n\E\|\xi_n\|^p<\infty$. Then $\{\xi_n,n\geq1\}$ is relatively
  compact in $\sigma(\Lp,\Lp[q])$. Without loss of generality, assume
  that $\xi_n\to\xi$. Then $\langle\xi_n,\zeta\rangle\leq
  h(X_n,\zeta)$ for all $\zeta\in\Lp[q](\GG)$. Taking expectation,
  letting $n\to\infty$ and using the convergence $\xi_n\to\xi$ and
  $X_n\to X$ yield that $\E h(\xi,\zeta)\leq \E h(X,\zeta)$. By
  Lemma~\ref{lemma:sel-domination}, $\xi$ is a selection of
  $X$. By the upper semicontinuity of $\uE$,
  \begin{displaymath}
    \limsup \uE(\xi_n+\cone)\subset \uE(\xi+\cone).
  \end{displaymath}
  Hence, $x\in \uE(\xi+\cone)$ for some $\xi\in\Lp(X)$, so that
  $x\in\uuE(X)$. 

  Assume now that $\|\xi_n\|_p^p=\E\|\xi_n\|^p\to\infty$. Let
  $\xi'_n=\xi_n/\|\xi_n\|_p$. This sequence is bounded in the
  $\Lp$-norm, and so assume without loss of generality that
  $\xi'_n\to\xi'$ in $\sigma(\Lp,\Lp[q])$. Since
  \begin{displaymath}
    x_n/\|\xi_n\|_p \in \uE((\xi_n+\cone)/\|\xi_n\|_p)
    =\uE(\xi'_n+\cone),
  \end{displaymath}
  the upper semicontinuity of $\uE$ yields that
  $0\in\uE(\xi'+\cone)$. For each $\zeta\in\Lp[q](\GG)$, we have
  $\langle\xi_n,\zeta\rangle\leq h(X_n,\zeta)$. Dividing by
  $\|\xi_n\|_p$, taking expectation, and letting $n\to\infty$ yield
  that $\E\langle\xi',\zeta\rangle\leq 0$. Thus, $\xi'\in\cone$ almost
  surely. Given that $\E\|\xi'\|=1$, this contradicts 
  the fact that $\uE(\xi'+\cone)$ contains the origin.  

  Similar reasons apply if $p=\infty$, splitting the cases when
  $\sup_n \|\xi_n\|$ is essentially bounded and when the essential
  supremum of $\|x_n\|$ converges to infinity.
\end{proof}

The exact calculation of $\uuE(X)$ involves working with all
$p$-integrable selections of $X$, which is a very rich family even in
simple cases, like $X=\xi+\KK$.  Since
\begin{equation}
  \label{eq:19}
  \uuE(X)\subset \uE(X),
\end{equation}
the superlinear expectation $\uE(X)$ yields a computationally
tractable upper bound on $\uuE(X)$.

\begin{example}
  \label{ex:reduced-family}
  Assume that $X=\xi+F$ for $\xi\in\Lp(\R^d)$ and a deterministic
  convex closed lower set $F$. Assume that $\uE$ in \eqref{eq:33} is
  reduced maximal and satisfies conditions of
  Corollary~\ref{cor:rm-dil}. Then
  \begin{equation}
    \label{eq:9s}
    \uuE(X)=\bigcup_{\xi'\in\Lp(F,\salg_\xi)} \uE(\xi+\xi'+\cone),
  \end{equation}
  where $\Lp(F,\salg_\xi)$ is the family of selections of $F$ which
  are measurable with respect to the $\sigma$-algebra generated by
  $\xi$. Indeed, $\uE(\xi+\xi'+\cone)$ is a subset of
  $\uE(\xi+\E(\xi'|\salg_\xi)+\cone)$ by Corollary~\ref{cor:rm-dil}.
\end{example}

Note that the minimal extension $\uuE$ is not necessarily a maximal
superlinear expectation. The following result describes its maximal
extension.

\begin{theorem}
  \label{thr:sel-ha}
  Assume that $\uuE$ is defined by \eqref{eq:33}, where $\uE=\ouEnot$
  is a scalarly upper semicontinuous reduced maximal superlinear
  expectation with representation \eqref{eq:7}. Then
  $\uuE(H_v(\beta))=\uE(H_v(\beta))$ for all $v\in\Sphere\cap\GG$ and
  $\beta\in\Lp(\R)$, and the reduced maximal extension of $\uuE$
  coincides with $\uE$.
\end{theorem}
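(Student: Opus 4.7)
The inclusion $\uuE(H_v(\beta)) \subset \uE(H_v(\beta))$ is automatic from \eqref{eq:19}, and a direct calculation using \eqref{eq:7} (noting that $\gamma H_v(\beta) = H_v(\gamma\beta)$ for $\gamma>0$, so $\E(\gamma H_v(\beta)) = H_v(\E(\gamma\beta))$) gives $\uE(H_v(\beta)) = H_v(\ue(\beta))$, where $\ue$ is the univariate superlinear expectation generated by $\sM$. For the reverse inclusion, I would first argue that $\uuE(H_v(\beta))$ is itself a downward half-space $H_v(t^*)$ for some $t^* \leq \ue(\beta)$: since $H_v(\beta) + a = H_v(\beta)$ for every $a \in v^\perp$, additivity on deterministic singletons forces $\uuE(H_v(\beta))$ to be translation-invariant under $v^\perp$, and combined with closedness, convexity, and $\cone$-closedness (Proposition~\ref{prop:GG}, using $\cone \subset H_v(0)$) this pins down the form.

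The crux will be to exhibit a single selection $\xi^* \in \Lp(H_v(\beta))$ with $\ue(\beta)\, v \in \uE(\xi^* + \cone)$, which will force $t^* \geq \ue(\beta)$. Under the standing assumption that $\cone$ has non-empty interior, one can pick $c \in \cone$ with $\langle c, v\rangle = -1$, and set
\begin{displaymath}
  \xi^* = \ue(\beta)\, v + (\ue(\beta) - \beta)\, c.
\end{displaymath}
I would verify directly that $\langle \xi^*, v\rangle = \beta$, so $\xi^*$ is indeed a selection of $H_v(\beta)$, and that for every $\gamma \in \sM$ with $\E\gamma = 1$,
\begin{displaymath}
  \ue(\beta)\, v - \E(\gamma \xi^*) = (\E(\gamma\beta) - \ue(\beta))\, c \in \cone,
\end{displaymath}
since $\E(\gamma\beta) \geq \ue(\beta)$ and $c \in \cone$. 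By \eqref{eq:7} this will yield $\ue(\beta)\, v \in \uE(\xi^* + \cone) \subset \uuE(H_v(\beta))$, hence $t^* = \ue(\beta)$ and the first assertion.

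The second assertion follows at once from the first: for each $u \in \Sphere \cap \GG$, $H_u(h(X,u))$ is a half-space, so $\uuE(H_u(h(X,u))) = \uE(H_u(h(X,u)))$; intersecting over $u$ and invoking definition \eqref{eq:9} together with the reduced maximality $\uE = \ouEnot$ gives
\begin{displaymath}
  \bigcap_{u \in \Sphere \cap \GG} \uuE(H_u(h(X,u))) = \bigcap_{u \in \Sphere \cap \GG} \uE(H_u(h(X,u))) = \ouEnot(X) = \uE(X).
\end{displaymath}
The main subtlety will be the construction of $\xi^*$: its existence relies on finding $c \in \cone$ with $\langle c,v\rangle < 0$, which is ensured precisely by $\cone \not\subset v^\perp$, i.e.\ by $\cone$ having non-empty interior.
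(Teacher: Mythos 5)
Your proof is correct and follows the same overall skeleton as the paper's (one inclusion from \eqref{eq:19}, the other by exhibiting a suitable selection, and the second assertion by intersecting over half-spaces), but the witness selection is genuinely different, and yours is the more robust choice. The paper takes $\xi=\beta v$ and asserts that every $tv$ with $t\le\ue(\beta)$ lies in $\uE(\beta v+\cone)=\bigcap_{\gamma}(\E(\gamma\beta)v+\cone)$; this requires $(t-\E(\gamma\beta))v\in\cone$, i.e.\ essentially $-v\in\cone$, which need not hold for every $v\in\Sphere\cap\GG$ even when $\cone$ has interior points. Your selection $\xi^*=\ue(\beta)v+(\ue(\beta)-\beta)c$ with $c\in\cone$, $\langle c,v\rangle=-1$, avoids this: the computation $\ue(\beta)v-\E(\gamma\xi^*)=(\E(\gamma\beta)-\ue(\beta))c\in\cone$ is valid for every $\gamma\in\sM$ with $\E\gamma=1$, and you then recover the whole half-space from translation invariance under $v^\perp$ and $\cone$-closedness (indeed $v^\perp+\R_+c=H_v(0)$, so capturing the single point $\ue(\beta)v$ already suffices). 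What your approach buys is that it works under the exact hypothesis needed, namely the existence of some $c\in\cone$ with $\langle c,v\rangle<0$.

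Two small points to tighten. First, the phrase ``$\cone\not\subset v^\perp$, i.e.\ $\cone$ having non-empty interior'' is not an equivalence: non-empty interior implies $\cone\not\subset v^\perp$ for every $v$, but not conversely; you only need the implication, so state it that way. This hypothesis is genuinely needed (for a cone contained in $v^\perp$, e.g.\ $\cone=\R_-\times\{0\}$ in $\R^2$ and $v=(0,1)$, the minimal extension of a half-space can be strictly smaller than $H_v(\ue(\beta))$), and it is supplied by the standing assumption of Section~\ref{sec:select-superl-expect} that $\cone$ has interior points; it is good practice to cite that explicitly. Second, in the degenerate case $\ue(\beta)=-\infty$ both sides are empty and there is nothing to prove; a one-line remark disposes of it.
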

\begin{proof}
  By \eqref{eq:27a}, $\uE(H_v(\beta))=H_v(\ue(\beta))$.  
  In view of \eqref{eq:19}, it suffices to show that each $x\in
  H_v(\ue(\beta))$ also belongs to $\uuE(H_v(\beta))$.  Let $y$ be the
  projection of $x$ onto the subspace orthogonal to $v$. It suffices
  to show that $x-y\in \uuE(H_v(\beta)-y)$. Noticing that
  $H_v(\beta)-y=H_v(\beta)$, it is possible to assume that $x=tv$ for
  $t\leq\ue(\beta)$. 
  
  Consider $\xi=\beta v$. Then
  \begin{displaymath}
    \uE(\beta v+\cone)=\bigcap_{w\in\GG} H_w(\ue(\langle \beta
    v,w\rangle))
    =\bigcap_{w\in\GG} H_w(\langle v,w\rangle \ue(\beta)).
  \end{displaymath}
  Since $\langle tv,w\rangle\leq \langle v,w\rangle \ue(\beta)$, we
  deduce that $x\in\uE(\xi+\cone)\subset\uuE(H_v(\beta))$. 
  
  Since $\uuE$ and $\uE$ coincide on half-spaces, the reduced maximal
  extension of $\uuE$ is
  \begin{align*}
    \underline{\ouEnot}(X)&=\bigcap_{v\in\Sphere\cap\GG} \uuE(\HvX)\\
    &=\bigcap_{v\in\Sphere\cap\GG} \uE(\HvX)=\ouEnot(X)=\uE(X). \qedhere
  \end{align*}
\end{proof}

In view of \eqref{eq:19}, $\uE(X)=\uuE(X)$ if 
\begin{equation}
  \label{eq:61}
  h(\uE(X),v)\leq \sup_{\xi\in\Lp(X)} \langle \uev(\xi),v\rangle, 
  \quad v\in\GG. 
\end{equation}
This surely holds for $X=\xi+\cone$ and also for $X$ being a
half-space with a deterministic normal. In general, $\uuE(X)$ may be a
strict subset of $\uE(X)$ as the following example shows, so
superlinear expectations are not exact even on rather simple random
sets of the type $\xi+\cone$.

\begin{example}
  \label{ex:two-values}
  Assume that $\cone=\R_-^2$ and consider 
  $\xi\in\R^2$ which equally takes two possible values: the origin and
  $a=(a_1,a_2)$. Let $X=\xi+\mathbb{K}$, where $\mathbb{K}$ is the cone containing
  $\R_-^2$ and with points $(1,-\pi)$ and $(-\pi',1)$ on its boundary,
  such that $\pi,\pi'>1$. 

  Let $\sM_v=\sM$ be the family from Example~\ref{ex:avar} and let
  $\ue$ be the corresponding superlinear expectation with the
  representing set $\sM$. For each
  $\beta\in\Lp[1](\R)$, $\ue(\beta)$ equals the
  average of the $t$-quantiles of $\beta$ over $t\in(0,\alpha)$.  If
  $\alpha\in(0,1/2]$ and $\beta$ takes two values with equal
  probabilities, then $\ue(\beta)$ is the smaller value of
  $\beta$. Then $\uuE(X)=\mathbb{K}\cap(a+\mathbb{K})$, so that $\uuE(X)$ coincides
  with $\ouEnot(X)$ in this case, see Example~\ref{ex:intersection}.
  
  Now assume that $\alpha\in(1/2,1)$. If $\beta$ equally likely takes
  two values $t$ and $s$, then
  $\ue(\beta)=\max(t,s)-|t-s|/(2\alpha)$, and
  \begin{displaymath}
    \ue(\langle \xi,v\rangle)=\max(\langle a,v\rangle,0)
    -\frac{1}{2\alpha}|\langle a,v\rangle|
  \end{displaymath}
  for all $v$ from $\GG=\mathbb{K}^o$. Since $\mathbb{K}$ is a Riesz
  cone, $\ouEnot(\xi+\mathbb{K})=x+\mathbb{K}$ for some $x$, see Example~\ref{ex:riesz}.
  For $v\in\GG$, the linear function $\langle x,v\rangle$ is dominated
  by $\frac{1}{2\alpha}\langle a,v\rangle$ if $\langle a,v\rangle<0$
  and by $(1-\frac{1}{2\alpha})\langle a,v\rangle$ otherwise. By an
  elementary calculation, 
  \begin{displaymath}
    x=\frac{1}{2\alpha}a+\Big(\frac{1}{\alpha}-1\Big)\frac{a_1\pi'+a_2}{\pi\pi'-1}
    (-\pi',1).
  \end{displaymath}
  In view of Example~\ref{ex:reduced-family},
  it suffices to consider selections of $\mathbb{K}$ measurable with respect
  to the $\sigma$-algebra $\salg_\xi$ generated by $\xi$; these
  selections take two values from the boundary of $\mathbb{K}$ with equal
  probabilities. 
  The minimal extension $\uuE(X)$ can be found by \eqref{eq:9s},
  letting $\xi'$ equally likely take two values $y=(y_1,y_2)$ and
  $z=(z_1,z_2)$ on the boundary $\partial\mathbb{K}$ of $\mathbb{K}$. Then
  \begin{displaymath}
    h(\uuE(X),v)=\sup_{y,z\in\partial \mathbb{K}}
    \sum_{i=1}^2 (\max(y_i,a_i+z_i)-\frac{1}{2\alpha}|a_i+z_i-y_i|)v_i.
  \end{displaymath}
  Figure~\ref{fig:1} shows $\ouEnot(X)$ and $\uuE(X)$ for $\pi=\pi'=2$,
  $a=(1,-1)$, and $\alpha=0.7$. It shows that the minimal extension
  may be indeed a strict subset of the reduced maximal superlinear
  expectation. 

  \begin{figure}[htbp]
    \centering
    \includegraphics[scale=0.4]{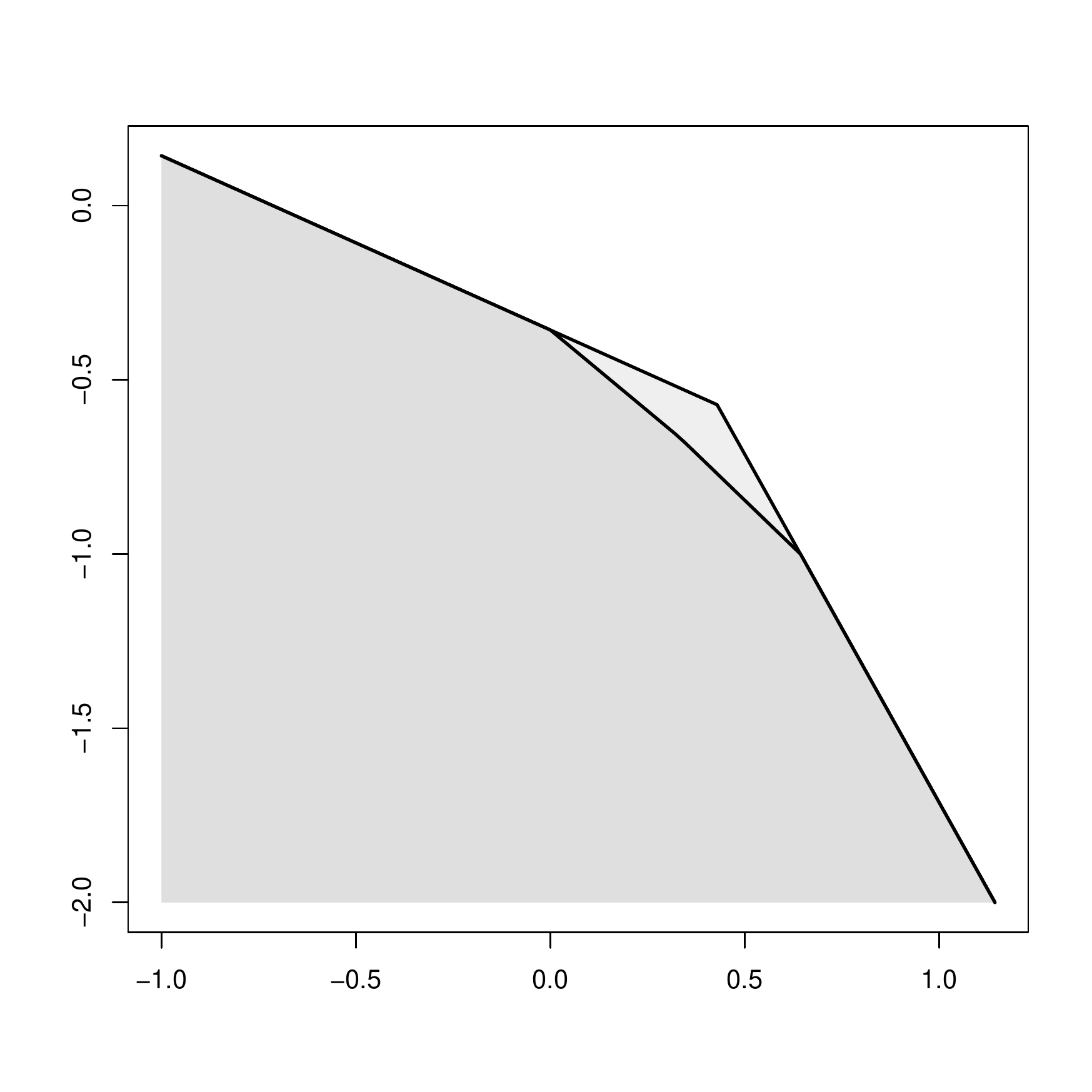}
    \caption{The reduced maximal superlinear expectation $\ouEnot(X)$
      (the larger cone) and the minimal extension $\uuE(X)$ (the
      smaller shaded set) for $X=\xi+\mathbb{K}$.    
      \label{fig:1}}
  \end{figure}
\end{example}

\section{Applications}
\label{sec:applications}

\subsection{Depth-trimmed regions and outliers}
\label{sec:subl-depth-trimm}

Consider a sublinear expectation $\vE$ restricted to the family of
$p$-integrable singletons, and let $\cone=\{0\}$. The map
$\xi\mapsto\vE(\{\xi\})$ satisfies the properties of
\emph{depth-trimmed regions} imposed in \cite{cas10}, which are those
from \cite{zuo:ser00a} augmented by the monotonicity and
subadditivity.

Therefore, the sublinear expectation provides a rather generic
construction of a depth-trimmed region associated with a random vector
$\xi\in\Lp(\R^d)$.  In statistical applications, points outside
$\vE(\{\xi\})$ or its empirical variant are regarded as
\emph{outliers}.  The subadditivity property \eqref{eq:sub-X} means
that, if a point is not an outlier for the convolution of two samples,
then there is a way to obtain this point as the sum of two
non-outliers for the original samples. 

\begin{example}[Zonoid-trimmed regions]
  \label{ex:uE-singletons}
  Fix $\alpha\in(0,1)$. For $\beta\in\Lp[1](\R)$, define
  \begin{displaymath}
    \ve_\alpha(\beta)=\alpha^{-1} \int_{1-\alpha}^1 q_\beta(s) \,
    ds, 
  \end{displaymath}
  where $q_\beta(s)$ is an $s$-quantile of $\beta$ (in case of
  non-uniqueness, the choice of a particular quantile does not matter
  because of integration).  The risk measure
  $r(\beta)=\ve_\alpha(-\beta)$ is called the \emph{average
    value-at-risk}. Denote by $\vE_\alpha$ the corresponding minimal
  sublinear expectation constructed by \eqref{eq:4}, so that
  $h(\vE_\alpha(\{\xi\}),u)=\ve_\alpha(\langle\xi,u\rangle)$ for all
  $u$.  The set $\vE_\alpha(\{\xi\})$ is the zonoid-trimmed region of
  $\xi$ at level $\alpha$, see \cite{cas10} and \cite{mos02}. This set
  can be obtained as
  \begin{displaymath}
    \vE_\alpha(\{\xi\})=\clo\big\{\E(\gamma\xi):\; \gamma\in\sP_\alpha\big\},
  \end{displaymath}
  where $\sP_\alpha\subset\Lp[1](\R_+)$ consists of all random
  variables with values in $[0,\alpha^{-1}]$ and expectation $1$, see
  Example~\ref{ex:avar}. 
  This setting is a special case of Theorem~\ref{thr:sub-dual} with
  $\sM=\{t\gamma:\; \gamma\in\sP_\alpha,t\geq0\}$. 
  The value of $\alpha$ controls the size of the depth-trimmed region,
  $\alpha=1$ yields a single point, being the
  expectation of $\xi$. 
  The subadditivity property of zonoid-trimmed regions was first
  noticed by \cite{cas:mol07}. 
\end{example}

\begin{example}[Lift expectation]
  \label{ex:lift-exp}
  Let $X$ be an integrable random closed convex set. Consider the
  random set $Y$ in $\R^{d+1}$ given by the convex hull of the origin
  and $\{1\}\times X$. The selection expectation $Z_X=\E Y$ is called
  the \emph{lift expectation} of $X$, see \cite{diay:kos:mol18}. If
  $X=\{\xi\}$ is a singleton, then $Z_X$ is the \emph{lift zonoid} of
  $\xi$, see \cite{mos02}.  By definition of the selection
  expectation, $Z_X$ is the closure of the set of
  $(\E(\beta),\E(\beta\xi))$, where $\beta$ runs through the family of
  random variables with values in $[0,1]$. Equivalently, $(\alpha,x)$
  belongs to $Z_X$ if and only if $x=\alpha\E(\gamma\xi)$ for
  $\gamma$ from the family $\sP_\alpha$, see
  Example~\ref{ex:uE-singletons}. Thus, the minimal extension
  $\uvE_\alpha$ of $\vE_\alpha$ from Example~\ref{ex:uE-singletons}
  is
  \begin{displaymath}
    \uvE_\alpha(X)=\alpha^{-1}\{x:\; (\alpha,x)\in Z_X\}.
  \end{displaymath}  
\end{example}


\subsection{Parametric families of nonlinear expectations}
\label{sec:param-famil-nonl}

Consider a dual pair $\uE$ and $\vE$ of nonlinear expectations such
that $\uE(X)\subset\E X\subset\vE(X)$ for all random closed sets
$X\in\Lp(\sFC)$. Then it is natural to regard observations of $X$ that
do not lie between the superlinear and sublinear expectation as
outliers. For each $F\in\co\sF$, it is possible to quantify its depth
with respect to the distribution of $X$ using parametric families of
nonlinear expectations constructed as follows.

Let $X_1,\dots,X_n$ be independent copies of a $p$-integrable random
closed convex set $X$. For a sublinear expectation $\vE$, 
\begin{displaymath}
  \vE_n(X)=\vE (\co(X_1\cup\cdots\cup X_n)) 
\end{displaymath}
is also a sublinear expectation. The only slightly nontrivial
property is the subadditivity, which follows from the fact that
\begin{displaymath}
  (X_1+Y_1)\cup\cdots\cup(X_n+Y_n)
  \subset (X_1\cup\cdots\cup X_n)+(Y_1\cup\cdots\cup Y_n). 
\end{displaymath}
If $X_1\cap\cdots\cap X_n$ is a.s. non-empty, then
\begin{displaymath}
  \uE_n(X)=\uE(X_1\cap\cdots\cap X_n)
\end{displaymath}
yields a superlinear expectation, noticing that 
\begin{displaymath}
  (X_1+Y_1)\cap\cdots\cap(X_n+Y_n)
  \supset (X_1\cap\cdots\cap X_n)+(Y_1\cap\cdots\cap Y_n). 
\end{displaymath}

It is possible to consistently let $\uE_\lambda^\cap(X)=\emptyset$ if
$X_1\cap\cdots\cap X_{N}$ is empty with positive probability. 

\begin{proposition}
  \label{prop:parametric}
  Let $N$ be a geometric random variable such that, for some
  $\lambda\in(0,1]$, $\Prob{N=k}=\lambda(1-\lambda)^{k-1}$, $k\geq1$,
  which is independent of $X_1,X_2,\dots$, being i.i.d. copies of
  $X$. Then
  \begin{equation}
    \label{eq:36}
    \vE_\lambda^\cup(X)=\vE (\co(X_1\cup\cdots\cup X_{N}))
  \end{equation}
  is a sublinear expectation and, if $X_1\cap\cdots\cap
  X_n\neq\emptyset$ a.s. for all $n$, then
  \begin{equation}
    \label{eq:35}
    \uE_\lambda^\cap(X)=\uE(X_1\cap\cdots\cap X_{N})
  \end{equation}
  is a superlinear expectation depending on $\lambda\in(0,1]$.
\end{proposition}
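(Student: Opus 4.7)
My plan is to verify, for each fixed $\lambda\in(0,1]$, the five defining properties of Definition~\ref{def:vE} for $\vE_\lambda^\cup$ as a sublinear expectation and for $\uE_\lambda^\cap$ as a superlinear one. Four of these axioms reduce directly to the corresponding properties of $\vE$ and $\uE$ applied to a random convex hull or intersection; the substantive step is sub- and super-additivity.

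Taking the routine axioms first, since $\co((X_1+a)\cup\cdots\cup(X_N+a))=a+\co(X_1\cup\cdots\cup X_N)$ almost surely, additivity on deterministic singletons for $\vE_\lambda^\cup$ follows from the same property of $\vE$, and the intersection version works identically for $\uE_\lambda^\cap$. For a deterministic $F\in\sFC$ every $X_i$ equals $F$, so $\co(F\cup\cdots\cup F)=F$ and $F\cap\cdots\cap F=F$, yielding $\vE_\lambda^\cup(F)=\vE(F)\supset F$ and $\uE_\lambda^\cap(F)=\uE(F)\subset F$. Homogeneity uses $c\,\co(\cdot)=\co(c\,\cdot)$ and $c(\bigcap A_i)=\bigcap(cA_i)$ for $c>0$. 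For monotonicity, if $X\subset Y$ a.s.\ I would take $(X_i,Y_i)_{i\geq1}$ to be i.i.d.\ copies of the pair $(X,Y)$, which retains the correct marginal i.i.d.\ structure for $X$ and for $Y$; then $X_i\subset Y_i$ pointwise, the convex hulls and intersections are correspondingly ordered, and monotonicity of $\vE$ (resp.\ $\uE$) delivers the result.

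The essential step is subadditivity of $\vE_\lambda^\cup$. For $X,Y\in\Lp(\sFC)$, I realise i.i.d.\ copies of $X+Y$ as $Z_i=X_i+Y_i$ with $(X_i,Y_i)_{i\geq1}$ i.i.d.\ copies of $(X,Y)$ independent of $N$. The elementary pointwise inclusion
\begin{equation*}
  (X_1+Y_1)\cup\cdots\cup(X_N+Y_N)\subset (X_1\cup\cdots\cup X_N)+(Y_1\cup\cdots\cup Y_N),
\end{equation*}
together with the identity $\co(A+B)=\co(A)+\co(B)$, gives
\begin{equation*}
  \co(Z_1\cup\cdots\cup Z_N)\subset\co(X_1\cup\cdots\cup X_N)+\co(Y_1\cup\cdots\cup Y_N)
\end{equation*}
almost surely. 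Monotonicity of $\vE$ followed by its subadditivity then yields $\vE_\lambda^\cup(X+Y)\subset\vE_\lambda^\cup(X)+\vE_\lambda^\cup(Y)$. The superadditivity of $\uE_\lambda^\cap$ is symmetric, using the dual pointwise inclusion
\begin{equation*}
  (X_1+Y_1)\cap\cdots\cap(X_N+Y_N)\supset(X_1\cap\cdots\cap X_N)+(Y_1\cap\cdots\cap Y_N)
\end{equation*}
with the monotonicity and superadditivity of $\uE$.

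The main subtlety, rather than a genuine obstacle, is that the coupling arguments tacitly require $\vE$ and $\uE$ to be law-determined, so that the values $\vE(\co(X_1\cup\cdots\cup X_N))$ and $\uE(X_1\cap\cdots\cap X_N)$ depend only on the distribution of the input; this is already implicit in Example~\ref{ex:chull} where the construction was first introduced. Once that is granted, independence of $N$ from the $X_i,Y_i$ lets the random index pass through each step exactly as if it were deterministic, so the geometric distribution plays no special role beyond indexing the one-parameter family. A minor technical check is that $\co(X_1\cup\cdots\cup X_N)$ and, when a.s.\ non-empty, $X_1\cap\cdots\cap X_N$ still belong to $\Lp(\sFC)$: the former admits $p$-integrable selections built from those of the $X_i$, while the latter is contained in $X_1$.
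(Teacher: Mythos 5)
Your proof is correct and follows essentially the same route as the paper, which records the two key Minkowski-sum inclusions for unions and intersections in the paragraphs immediately preceding the proposition and leaves the remaining axioms (and the passage from a deterministic index $n$ to the independent geometric index $N$) implicit. Your explicit observation that the coupling arguments require $\vE$ and $\uE$ to be law invariant, so that the definitions do not depend on the chosen copies, is a point the paper glosses over but is indeed needed.
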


\begin{example}
  \label{ex:iter-exp}
  Choosing $\vE(X)=\uE(X)=\E X$ in \eqref{eq:36} and \eqref{eq:35}
  yields a family of nonlinear expectations depending on parameter,
  which are also easy to compute. 
\end{example}

It is easily seen that $\vE_\lambda^\cup(X)$ increases and
$\uE_\lambda^\cap(X)$ decreases as $\lambda$ declines.  Define the
depth of $F\in\sFC$ as
\begin{displaymath}
  \mathrm{depth}(F)=\sup\{\lambda\in(0,1]:\;  
  \uE_\lambda^\cap(X)\subset F\subset \vE_\lambda^\cup(X)\}.
\end{displaymath}
It is easy to see that $\vE_1^\cup(X)=\vE(X)$,
$\uE_1^\cap(X)=\uE(X)$. Furthermore, $\uE_\lambda^\cap(X)$ declines to
the set of fixed points of $X$ and $\vE_\lambda^\cup(X)$ increases to
the support of $X$ as $\lambda\downarrow 0$, see
Example~\ref{ex:fp-nonlinear}.  Thus, all closed convex sets $F$
satisfying $F_X\subset F\subset \supp X$ have a positive depth.

In order to handle the empirical variant of this concept based on a
sample $X_1,\dots,X_n$ of independent observations of $X$, 
consider a random closed set $\tilde{X}$ that with equal probabilities
takes one of the values $X_1,\dots,X_n$. Its distribution can be
simulated by sampling one of these sets with possible
repetitions. Then it is possible to use the nonlinear expectations of
$\tilde{X}$ in order to assess the depth of any given convex set,
including those from the sample.

\subsection{Risk of a set-valued portfolio}
\label{sec:risk-set-valued}

For a random variable $\xi\in\Lp(\R)$ interpreted as a financial
outcome or gain, the value $\ve(-\xi)$ (equivalently, $-\ue(\xi)$) is
used in finance to assess the risk of $\xi$. It may be tempting to
extend this to the multivariate setting by assuming that the risk is a
$d$-dimensional function of a random vector $\xi\in\Lp(\R^d)$, with
the conventional properties extended coordinatewisely. However, in
this case the nonlinear expectations (and so the risk) are
marginalised, that is, the risk of $\xi$ splits into a vector of
nonlinear expectations applied to the individual components of $\xi$,
see Theorem~\ref{thr:split}.

Moreover, assessing the financial risk of a vector $\xi$ is impossible
without taking into account exchange rules that can be applied to its
components. If no exchanges are allowed and only consumption is
possible, then one arrives at positions being selections of
$X=\xi+\R_-^d$. On the contrary, if the components of $\xi$ are
expressed in the same currency with unrestricted exchanges and
disposal (consumption) of the assets, then each position from the
half-space $X=\{x:\; \sum x_i\leq \sum \xi_i\}$ is reachable from
$\xi$. Working with the random set $X$ also eliminates possible
non-uniqueness in the choice of $\xi$ with identical sums.

In view of this, it is natural to consider multivariate financial
positions as lower random closed convex sets, equivalently, those from
$\Lp(\sFC)$ with $\cone=\R_-^d$.  The random closed set is said to be
acceptable if $0\in\uE(X)$, and the risk of $X$ is defined as
$-\uE(X)$. The superadditivity property guarantees that if both $X$
and $Y$ are acceptable, then $X+Y$ is acceptable. This is the
classical financial diversification advantage formulated in set-valued
terms.

If $X\in\Lp(\sFC)$ and $\cone=\R_-^d$, the minimal extension
\eqref{eq:33} is called the \emph{lower set extension} of $\uE$.
If $\uE$ is reduced maximal, \eqref{eq:7} yields that
\begin{equation}
  \label{eq:8a}
  \uE(\xi+\R_-^d)
  =\bigcap_{\gamma\in\sM,\E\gamma=1} (\E(\gamma\xi)+\R_-^d)
  =\uev(\xi)+\R_-^d, 
\end{equation}
where $\uev(\xi)=(\ue(\xi_1),\dots,\ue(\xi_d))$ is defined by applying
the same superlinear expectation $\ue$ with representing set
$\sM$ to each component of $\xi$.  Then
\begin{equation}
  \label{eq:selection-super-1}
  \uuE(X)=\clo\bigcup_{\xi\in\Lp(X)} \big(\uev(\xi)+\R_-^d\big)
\end{equation}
In other words, $\uuE(X)$ is the closure of the set of all points
dominated coordinatewisely by the superlinear expectation of at
least one selection of $X$.  In \cite{cas:mol14}, the
origin-reflected set $-\uuE(X)$ was called the selection risk
measure of $X$.  

For set-valued portfolios $X=\xi+\cone$, arising as the sum of a
singleton $\xi$ and a (possibly random) convex cone $\cone$, the
maximal superlinear expectation (in our terminology), considered a
function of $\xi$ only and not of $\xi+\cone$, was studied by
\cite{ham:hey10} and \cite{ham:hey:rud11}. The case of general
set-valued arguments was pursued by \cite{cas:mol14}.  For the purpose
of risk assessment, one can use any superlinear expectation. However,
the sensible choices are the maximal superlinear expectation in view
of its closed form dual representation, and the lower set extension in
view of its direct financial interpretation (through its primal
representation), meaning the existence of a selection with all
acceptable components.  Given that the minimal superlinear expectation
may be a strict subset of the maximal one (see
Example~\ref{ex:two-values}), the acceptability of $X$ under a maximal
superlinear expectation may be a weaker requirement than the
acceptability under the lower set extension.

\renewcommand{\thesection}{Appendix}
\renewcommand{\thetheorem}{A.\arabic{theorem}}

\section{Marginalisation of vector-valued sublinear functions}
\label{sec:append-triv-vect}

It may be tempting to consider vector-valued functions
$\vev:\Lp(\R^d)\mapsto\R^d$, which are sublinear, that is,
$\vev(x)=x$ for all $x\in\R^d$,
$\vev(\xi)\leq \vev(\eta)$ if $\xi\leq \eta$ a.s.,
$\vev(c\xi)=c\vev(\xi)$ for all $c\geq0$, and 
\begin{displaymath}
  \vev(\xi+\eta)\leq \vev(\xi)+\vev(\eta). 
\end{displaymath}
Such a function may be viewed as a restriction of a sublinear
set-valued expectation onto the family of sets $\xi+\R_-^d$ and
letting $\vev(\xi)$ be the coordinatewise supremum of
$\vE(\xi+\R_-^d)$. 

The following result shows that vector-valued sublinear expectations
marginalise, that is, they split into sublinear expectations applied
to each component of the random vector. 

\begin{theorem}
  \label{thr:split}
  If $\vev$ is a $\sigma(\Lp,\Lp[q])$-lower semicontinuous
  vector-valued sublinear expectation, then
  \begin{displaymath}
    \vev(\xi)=(\ve_1(\xi_1),\dots,\ve_d(\xi_d))
  \end{displaymath}
  for a collection of numerical sublinear expectations
  $\ve_1,\dots,\ve_d$. 
\end{theorem}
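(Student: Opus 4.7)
The plan is to work component-by-component. For each $i\in\{1,\ldots,d\}$, set $\vev_i:=\pi_i\circ\vev:\Lp(\R^d)\to\R$, where $\pi_i$ denotes projection onto the $i$-th coordinate. Reading off the axioms of $\vev$ coordinate by coordinate, $\vev_i$ is positively homogeneous, subadditive, coordinate-wise monotone, satisfies $\vev_i(x)=x_i$ for every deterministic $x\in\R^d$, and is $\sigma(\Lp,\Lp[q])$-lower semicontinuous. It therefore suffices to exhibit a numerical sublinear expectation $\ve_i$ with $\vev_i(\xi)=\ve_i(\xi_i)$.

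The main step is to apply the bipolar theorem to $\vev_i$ on the locally convex space $\Lp(\R^d)$ paired with $\Lp[q](\R^d)$, exactly along the lines of the necessity direction of Lemma~\ref{thr:dsub} specialised to $\cone=\{0\}$. This produces a nonempty convex $\sigma(\Lp[q],\Lp)$-closed set $\sM_i\subset\Lp[q](\R^d)$ with
\[
\vev_i(\xi)=\sup_{\zeta\in\sM_i}\E\langle\zeta,\xi\rangle,\qquad\xi\in\Lp(\R^d).
\]
Applying this representation to the deterministic inputs $\pm x$ gives $\sup_{\zeta\in\sM_i}\langle\E\zeta,x\rangle=x_i$ and, from $\vev_i(-x)=-x_i$, $\inf_{\zeta\in\sM_i}\langle\E\zeta,x\rangle=x_i$; hence the functional $\zeta\mapsto\langle\E\zeta,x\rangle$ is constantly equal to $\langle e_i,x\rangle$ on $\sM_i$ for every $x\in\R^d$, forcing $\E\zeta=e_i$ for all $\zeta\in\sM_i$.

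Monotonicity applied to $-\xi\leq 0$ for $\xi\in\Lp(\R^d)$ with $\xi\geq 0$ a.s.\ gives $\vev_i(-\xi)\leq\vev_i(0)=0$, which dualises to $\inf_{\zeta\in\sM_i}\E\langle\zeta,\xi\rangle\geq 0$, so that $\E\langle\zeta,\xi\rangle\geq 0$ for every individual $\zeta\in\sM_i$. Specialising to $\xi=\one_A e_j$ with $A\in\salg$ and $j\in\{1,\ldots,d\}$ yields $\E[\zeta_j\one_A]\geq 0$ for all $A$, whence $\zeta_j\geq 0$ a.s. Combined with $\E\zeta=e_i$, the identity $\E\zeta_j=0$ together with $\zeta_j\geq 0$ a.s.\ forces $\zeta_j=0$ a.s.\ for every $j\neq i$. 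Thus each $\zeta\in\sM_i$ has the form $\zeta=\gamma e_i$ with $\gamma:=\zeta_i\in\Lp[q](\R_+)$ satisfying $\E\gamma=1$.

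Setting $\sM_i':=\{\zeta_i:\zeta\in\sM_i\}\subset\Lp[q](\R_+)$, the dual representation collapses to
\[
\vev_i(\xi)=\sup_{\gamma\in\sM_i',\,\E\gamma=1}\E[\gamma\xi_i]=:\ve_i(\xi_i),
\]
where $\ve_i$ is a numerical sublinear expectation of the form \eqref{eq:supremum-vE}. The main obstacle is the bipolar step: one has to ensure the representation is available on $\Lp(\R^d)$ in the $\sigma(\Lp,\Lp[q])$-topology, which is exactly where the lower semicontinuity hypothesis is used (and which requires the usual Fatou-type care when $p=\infty$). Once the dual representation is in hand, extracting $\E\zeta=e_i$ from the identity on deterministic inputs and then annihilating the off-diagonal components via monotonicity are both immediate.
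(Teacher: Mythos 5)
Your proof is correct, but it reaches the conclusion by a genuinely different route than the paper. The paper dualises the vector-valued map as a whole: it forms the single acceptance cone $\sA=\{\xi:\;\vev(\xi)\leq 0\}$ in $\Lp(\R^d)$, takes its polar in the space of $\R^d$-valued measures, writes $\vev(\xi)$ as the coordinatewise infimum of the intersection of the sets $C_\muv$ in \eqref{eq:12}, and then argues that any $\muv$ with two or more non-vanishing components is \emph{redundant} in that intersection, since $C_\muv$ contains the product of half-lines already cut out by the single-component measures with the same marginals. You instead project first, dualise each scalar functional $\vev_i=\pi_i\circ\vev$ separately, and show that the representing densities $\zeta\in\sM_i$ literally \emph{vanish} off the $i$-th coordinate: translation equivariance on deterministic points (applied to both $x$ and $-x$) pins down $\E\zeta=e_i$, monotonicity gives $\zeta\geq 0$ componentwise, and $\E\zeta_j=0$ together with $\zeta_j\geq 0$ annihilates $\zeta_j$ for $j\neq i$. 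Both arguments rest on the bipolar theorem, but the mechanism for eliminating cross-terms differs (redundancy in a coordinatewise infimum versus forcing the dual variables to be supported on one coordinate), and your version has the advantage of exhibiting the representing set of each marginal $\ve_i$ explicitly in the form \eqref{eq:supremum-vE}, while the paper's version never needs to identify $\E\muv$ exactly. Your parenthetical caution that the dual representation requires the lower semicontinuity hypothesis, with extra care at $p=\infty$, is exactly where the paper's standing assumptions are used as well.
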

\begin{proof}
  The set $\sA=\{\xi:\; \vev(\xi)\leq 0\}$ is a
  $\sigma(\Lp,\Lp[q])$-closed convex cone in $\Lp(\R^d)$. The polar
  cone $\sA^o$ is the set of all $\R^d$-valued measures
  $\muv=(\mu_1,\dots,\mu_d)$ such that
  \begin{displaymath}
    \int \xi d\muv=\left(\int\xi_1 d\mu_1,\dots,\int \xi_d
      d\mu_d\right)\leq 0
  \end{displaymath}
  for all $\xi\in\sA$. It is easy to see that each $\muv\in\sA$ has
  all nonnegative components. The bipolar theorem yields that
  \begin{displaymath}
    \sA=\left\{\xi:\; \int \xi d\muv\leq 0 \;\text{for all}
      \; \muv\in\sA^o\right\}.
  \end{displaymath}
  Since $\vev$ is constant preserving, 
  \begin{displaymath}
    \vev(\xi+x)-x\leq \vev(\xi)=\vev((\xi+x)-x)\leq \vev(\xi+x)-x,
  \end{displaymath}
  so that $\vev(\xi+x)=\vev(\xi)+x$ for all deterministic $x\in\R^d$. 
  Hence,
  \begin{equation}
    \label{eq:12} \tag{A.1}
    \vev(\xi)=\inf \bigcap_{\muv\in\sA^o} \Big\{y\in\R^d:\; \int \xi
    d\muv \leq \int y d\muv\Big\},
  \end{equation}
  where the infimum is taken coordinatewisely.

  Consider the set $C_\muv=\{y\in\R^d:\; \int \xi d\muv \leq \int y
  d\muv\}$ for some $\muv=(\mu_1,\dots,\mu_d)\in\sA^o$. Let $\sA_i^o$
  denote the family of all nontrivial $\muv\in\sA^o$ such that
  $\mu_j$ vanish for all $j\neq i$. Note that if $\muv\in\sA^o$, then
  $(\mu_1,0,\dots,0)\in\sA_1^o$, that is the projections of $\sA^o$
  and $\sA_i^o$ on each of the component coincide. If
  $\muv\in\sA_1^o$, then
  \begin{displaymath}
    C_\muv=\Big[\int \xi_1 d\mu_1,\infty\Big)\times \R\times\cdots\times\R.
  \end{displaymath}
  Assume that two components of $\muv$ do not vanish, say $\mu_1$ and
  $\mu_2$. Then 
  \begin{align*}
    C_\muv&=\Big\{y:\; \int \xi_1 d\mu_1+\int \xi_2 d\mu_2 \leq \int y_1
    d\mu_1 + \int y_2 d\mu_2 \Big\}\\ & \supset 
    \Big[\int \xi_1 d\mu_1,\infty \Big)\times \Big[\int \xi_2 d\mu_2,\infty \Big)
    \times\R\times\cdots\times\R.
  \end{align*}
  Thus, this latter set $C_\muv$ does not influence the coordinatewise
  infimum in \eqref{eq:12} comparing to the sets obtained by letting
  $\muv\in \sA_1^o\cup\sA_2^o$. The same argument applies to $\muv\in\sA^o$
  with more than two nonvanishing components. Thus, the intersection
  in \eqref{eq:12} can be taken over
  $\muv\in\sA_1^o\cup\cdots\cup\sA_d^o$, whence the result. 
\end{proof}

A similar result holds for superlinear vector-valued expectations.

\section*{Acknowledgements}

IM is grateful to Ignacio Cascos for discussions and a collaboration
on related works. This work was motivated by the stay of IM at the
Universidad Carlos III de Madrid in 2012 supported by the Santander
Bank.  IM was also supported in part by Swiss National Science
Foundation grants 200021\_153597 and IZ73Z0\_152292.


\newcommand{\noopsort}[1]{} \newcommand{\printfirst}[2]{#1}
  \newcommand{\singleletter}[1]{#1} \newcommand{\switchargs}[2]{#2#1}

\end{document}